\def\Z{\mathbb{Z}}
\def\BHR{\mathop{\rm BHR}}
\newtheorem{defi}{Definition}[section]
\newtheorem{prop}[defi]{Proposition}
\newtheorem{lem}[defi]{Lemma}
\newtheorem{rem}[defi]{Remark}
\newtheorem{thm}[defi]{Theorem}
\newtheorem*{conj}{Conjecture}
\begin{document}
\begin{frontmatter}
\title{A new result on the
problem of Buratti, Horak and Rosa}

\author[ap]{Anita Pasotti\corref{cor1}}
\address[ap]{DICATAM - Sez. Matematica, Universit\`a degli Studi di Brescia, Via
Valotti 9, I-25133 Brescia, Italy}
\ead[ap]{anita.pasotti@ing.unibs.it}
\cortext[cor1]{Corresponding author}

\author[mp]{Marco Antonio Pellegrini}
\address[mp]{Departamento de Matem\'atica, Universidade de Bras\'ilia - ICC Centro,
70910-900  Bras\'ilia - DF, Brazil}
\ead[mp]{pellegrini@unb.br}

\begin{keyword}
Hamiltonian path \sep complete graph \sep edge-length
\MSC[2010]{05C38}
\end{keyword}

\begin{abstract}
The conjecture of Peter Horak and Alex Rosa (generalizing that of Marco Buratti) states that a multiset $L$ of $v-1$ positive integers
not exceeding $\lfloor {v\over2}\rfloor$ is the list of {\it edge-lengths} of a suitable Hamiltonian path of the complete
graph with vertex-set $\{0,1,\ldots,v-1\}$ if and only if the following condition (here reformulated in a slightly easier form)
is satisfied: for every divisor $d$ of $v$, the number of multiples of $d$ appearing in $L$
is at most $v-d$.
In this paper we do some preliminary discussions on the conjecture, including its relationship
with graph decompositions. Then we prove, as main result, that the conjecture is true
whenever all the elements of $L$ are in $\{1,2,3,5\}$.
\end{abstract}
\end{frontmatter}

\section{Introduction}
Throughout this paper $K_v$ will denote the complete graph on $\{0,1,\dots,v-1\}$ for any positive integer $v$.
For the basic terminology on graphs we refer to \cite{Wbook}.
Following \cite{HR}, we define the {\it length} $\ell(x,y)$ of an edge $[x,y]$ of $K_v$ as
$$\ell(x,y)=min(|x-y|,v-|x-y|).$$
If $\Gamma$ is any subgraph of $K_v$, then the list of edge-lengths of $\Gamma$ is the multiset
$\ell(\Gamma)$ of the lengths (taken with their respective multiplicities) of  all the edges
of $\Gamma$. For our convenience, if a list $L$ consists of
$a_1$ $1'$s, $a_2$ $2'$s, \ldots, $a_t$ $t'$s,
we will write $L=\{1^{a_1},2^{a_2},\ldots,t^{a_t}\}$.

The following conjecture \cite{W} is due to Marco Buratti
(2007, communication to Alex Rosa).

\begin{conj}[Buratti]
For any prime $p=2n+1$
and any multiset $L$ of $2n$ positive integers not exceeding $n$, there exists a
Hamiltonian path $H$ of $K_p$ with $\ell(H)=L$.
\end{conj}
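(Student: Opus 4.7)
The statement above is Buratti's conjecture in full generality, which is a well-known open problem; accordingly my ``plan'' is to outline the strategies that have proved useful in attacking special cases and to indicate where the principal obstacle lies.

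The natural first attempt is induction on $|L|=p-1$. Pick some $a\in L$; one would like to exhibit a Hamiltonian path of $K_p$ realizing $L$ whose last edge has length exactly $a$, so that deleting its last vertex reduces the problem to a Hamiltonian path of a smaller complete graph realizing $L\setminus\{a\}$. This naive reduction fails because $p-1$ is composite, so the inductive hypothesis of Buratti is unavailable. One is therefore led to the Horak--Rosa generalization mentioned in the abstract, in which modular divisibility conditions on $v$ replace primality; reducing Buratti to Horak--Rosa and then attacking the latter is the standard way to open the problem to induction.

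Within that framework my plan would be two-fold. First, induct on the number $k$ of distinct entries of $L=\{a_1^{m_1},\ldots,a_k^{m_k}\}$. When $k=1$ the graph of length-$a$ edges in $K_p$ is a single Hamiltonian cycle (as $\gcd(a,p)=1$), and deleting one edge gives the required path; for $k=2$ explicit patterns handle every admissible list; for larger $k$ one tries to splice together paths realizing sub-multisets. Second, use an \emph{edge-exchange lemma}: if $[x,y]$ and $[u,v]$ are edges of a Hamiltonian path $H$, then replacing them by $[x,u]$ and $[y,v]$ yields (under suitable position conditions) another Hamiltonian path whose edge-length multiset differs from $\ell(H)$ in a controlled way. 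Iterating such exchanges from the trivial path $0,1,\ldots,p-1$ (whose edge-length list is $\{1^{p-1}\}$) gives a plausible route to every admissible $L$.

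The main obstacle is the exactness of the prescription: one needs precisely $L$, not a superset or a relaxation, so inductive reductions are fragile and local arithmetic constraints (the divisibility conditions being the most visible of them) must be respected at every stage. For lists with very small support, such as $L\subseteq\{1,2,3,5\}$, one can realistically hope to complete a finite, if intricate, case analysis; this is presumably the road the paper takes to establish its main theorem. For general $L$ the combinatorial landscape of Hamiltonian paths is too rich for any uniform treatment, and I would expect a complete proof to require a structural insight beyond the constructive manipulations sketched above.
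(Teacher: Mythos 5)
You have correctly identified that the statement is Buratti's conjecture itself, which is an open problem; the paper states it only as a conjecture and offers no proof of it, proving instead the special case $\BHR(\{1^a,2^b,3^c,5^d\})$ (Theorem \ref{Bur-Rosa}). So there is no proof in the paper to compare against, and your decision not to fabricate one is the right call. That said, a few points of comparison with what the paper actually does for its special case are worth making. Your idea of ``splicing together paths realizing sub-multisets'' is precisely the paper's central tool: the composition $RL_1+rL_2$ of a \emph{perfect} linear realization (one ending at the largest vertex) with an arbitrary linear realization, built on the base case of Capparelli--Del Fra for lists supported on $\{1,2,3\}$. The crucial technical point you gloss over is \emph{why} this works: concatenation is only well-behaved for \emph{linear} realizations (absolute differences $|x_i-x_{i+1}|$), not for cyclic ones (lengths reduced modulo $v$), which is why the paper works almost entirely with linear realizations and only converts back to cyclic realizations at the end via Remark \ref{cyclin}. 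Your proposed reduction ``delete the last vertex'' fails not only because $p-1$ is composite but because deleting a vertex of $K_p$ changes the lengths of the surviving edges (they are computed modulo the order of the graph); the linear/cyclic distinction is exactly the device that circumvents this. The edge-exchange lemma you propose is not used in the paper at all; the remaining cases are handled by explicit direct constructions organized into finitely many infinite families parametrized by congruence classes. Finally, note that your claim that for $k=1$ the length-$a$ edges form a single Hamiltonian cycle is correct for $p$ prime, and the two-length case is indeed settled in the cited literature, so those parts of your sketch are consistent with what is known.
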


Buratti himself never worked at his conjecture but he
finally mentions it in \cite[p. 14]{BM}. The problem appears to be very difficult, so much so that Alex Rosa
defined it a {\it combinatorial disease} in his lecture at the international conference {\it Combinatorics 2008}, held in
Costermano (Italy) from 22 June to 28 June 2008. The conjecture is almost trivially true
in the case that $L$ has just one edge-length. The case of exactly two distinct edge-lengths
 has been solved independently in \cite{DJ, HR} but, for the time being, the case of exactly three distinct edge-lengths has been solved
only when these lengths are 1, 2 and 3, see \cite{CDF}.
The authors of the present paper are investigating in \cite{PPnew} the more general case
 in which the elements of the list are 1, 2 and $t$, where $t$ is an arbitrary positive integer.
Mariusz Meszka checked that the conjecture is true for all primes $p\leq23$ by computer.  Some  general results on the conjecture can be found in \cite{HR}, in particular that it is true when there is an edge-length occurring ``sufficiently many times'' in $L$.

In \cite{HR} Peter Horak and Alex Rosa generalized Buratti's conjecture as follows.
\begin{conj}[Horak and Rosa]\label{Conj HR}
Let $L$ be a list of $v-1$ positive integers not exceeding $\lfloor{v\over2}\rfloor$.
Then there exists a Hamiltonian path $H$ of $K_v$ such that $\ell(H)=L$ if, and only if, the following condition holds:
\begin{equation}\label{HR}
\left. \begin{array}{c}
\textrm{for any sublist $J$ of $L$ with $J \ \cap \ (L\setminus J)=\emptyset$,}  \\
\textrm{we have $|J|\geq \gcd\{v,\ell \ | \ \ell \in L\setminus J\}-1$.}
\end{array}\right.
\end{equation}
\end{conj}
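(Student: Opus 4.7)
Since the statement is a biconditional, I would split the argument into a necessity direction (straightforward) and a sufficiency direction (the substantive open problem), attacking them by very different methods.

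\emph{Necessity.} Suppose $H=(x_0,x_1,\ldots,x_{v-1})$ is a Hamiltonian path of $K_v$ with $\ell(H)=L$. Fix a partition $L=J\sqcup(L\setminus J)$ with $J\cap(L\setminus J)=\emptyset$, and set $d=\gcd\{v,\ell:\ell\in L\setminus J\}$. Since $d\mid v$ and $d$ divides every length in $L\setminus J$, any edge of $H$ whose length lies in $L\setminus J$ joins two vertices congruent mod $d$. Removing the $|J|$ edges whose lengths lie in $J$ splits $H$ into $|J|+1$ subpaths, each contained in a single residue class mod $d$. Since every residue class must be visited, the number of pieces is at least $d$, giving $|J|+1\geq d$, i.e.\ $|J|\geq d-1$.

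\emph{Sufficiency.} This is the heart of the Horak--Rosa conjecture and in full generality it remains open. The plan I would pursue is a double induction: on $v$ and, within a fixed $v$, on the number of distinct edge-lengths appearing in $L$. The base cases of one or two distinct lengths are already settled (see \cite{DJ,HR}). For the inductive step, I would try to identify an edge-length $\ell\in L$ whose removal permits a reduction---either concatenate a short, explicitly constructed path realising some occurrences of $\ell$ with a Hamiltonian path for the remaining list on a smaller complete graph, or perform rotation/swap moves along a nearly-Hamiltonian structure to absorb one more occurrence of $\ell$. In either case one must verify that the reduced list still satisfies (\ref{HR}) on the smaller vertex set, which demands a careful bookkeeping lemma translating the condition across the reduction.

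\emph{Main obstacle.} The difficulty is that condition (\ref{HR}) is intrinsically \emph{global}: it couples all divisors of $v$ simultaneously with all sublists of $L$, and a reduction that is safe for one divisor may violate the condition at another. Consequently no single uniform reduction appears to work, and in the literature progress has been made only by restricting the admissible set of lengths (e.g.\ $\{1,2,3\}$ in \cite{CDF}). A realistic plan is therefore to prove sufficiency under such a restriction: enumerate the finitely many admissible ``shapes'' of $L$ with entries in the prescribed set, handle small-$v$ anomalies by ad hoc constructions, and for each large-$v$ shape exhibit an explicit parametric family of Hamiltonian paths whose edge-length multisets match $L$. This is the route I anticipate the authors to follow when specialising to $L\subseteq\{1,2,3,5\}$.
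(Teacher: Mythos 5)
Your necessity argument is correct and is essentially the one the paper gives: the paper first shows (Proposition \ref{HR iff B}) that condition (\ref{HR}) is equivalent to the divisor condition (\ref{B}), and then deletes from $H$ the edges whose lengths are not multiples of a fixed divisor $d$ of $v$, counting the resulting components against the residue classes modulo $d$; you work with (\ref{HR}) directly by deleting the edges with lengths in $J$, and your count (at least $d$ pieces because every class modulo $d$ is nonempty and each piece lies in a single class) is a marginally cleaner version of the paper's inequality $v\leq(N+1)\frac{v}{d}$. For the sufficiency you rightly observe that the statement is a conjecture and open in general; the paper likewise proves it only for lists with underlying set contained in $\{1,2,3,5\}$ (Theorem \ref{Bur-Rosa}). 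Your sketched plan for the restricted case is broadly the paper's strategy, but it omits the device that makes the concatenation idea actually function: the paper distinguishes \emph{linear} realizations (paths realizing $L$ as absolute differences $|x_i-x_{i+1}|$ rather than cyclic lengths) from cyclic ones, calls a linear realization \emph{perfect} when it terminates at the largest vertex, and composes a perfect realization $RL_1$ with an arbitrary realization $rL_2$ by translation as in the operation $(*)$ of Section 2. This composition fails for cyclic realizations, so the whole induction is run on linear realizations and one converts back to cyclic realizations at the end using the fact that a linear realization of $L$ is a cyclic realization of the same $L$ whenever all entries are at most $\lfloor(|L|+1)/2\rfloor$ (Remark \ref{cyclin}), with the residual lists handled by the explicit families of Section 4. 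The ``bookkeeping lemma'' you anticipate is therefore not about transporting condition (\ref{HR}) across a reduction in $v$, but about controlling perfection of the first factor and the size of the largest entry. Without some such concrete engine your proposed double induction on $v$ and on the number of distinct lengths does not get off the ground, so as written the sufficiency half remains a research plan rather than a proof---which is unavoidable, since no proof of the full conjecture is known.
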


Note that for a sublist $J$ of $L$, we may actually have
$J \ \cap \ (L\setminus J)\neq\emptyset$ since $L$ and $J$ are multisets; for instance,
if $L=\{1,1,1,1,2,2,2\}$ and $J=\{1,2,2\}$, then we have $L\setminus J=\{1,1,1,2\}$ and
hence $J \ \cap \ (L\setminus J)=\{1,2\}$.

It is evident that condition (1) is always trivially satisfied when $v$ is a prime. Therefore in this case
the conjecture of Horak and Rosa reduces to that of Buratti.
For short, by $\BHR(L)$ we will mean the above conjecture
for a given list $L$.
A positive answer
of $\BHR(\{\ell_1^a,\ell_2^b\})$ was presented in \cite{HR}, namely
it was proved that the conjecture is true when $L$ has exactly two distinct edge-lengths.
We also observe that the result of Capparelli and Del Fra \cite{CDF} allows to see
that the conjecture of Horak and Rosa (not only that of Buratti) is true when all the
elements of $L$ are in $\{1,2,3\}$, so $\BHR(\{1^a,2^b,3^c\})$ holds for all integers
$a,b,c\geq0$.

\medskip
Next proposition gives an equivalent easier form to state condition (\ref{HR}).
\begin{prop}\label{HR iff B}
Condition $(\ref{HR})$ is equivalent to the following:
\begin{equation}\label{B}
\left. \begin{array}{c}
\textrm{for any divisor $d$ of $v$, the number of multiples of $d$} \\
\textrm{appearing in $L$ does not exceed $v-d$.}
\end{array}\right.
\end{equation}
\end{prop}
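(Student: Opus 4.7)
The plan is to establish the two implications separately, each reducing to a direct application of the other condition with a cleverly chosen object. The key identity to keep in mind throughout is $|J|+|L\setminus J|=v-1$.

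For $(\ref{HR}) \Rightarrow (\ref{B})$, the idea is: given a divisor $d$ of $v$, I would test condition (\ref{HR}) on the specific sublist $J$ obtained by collecting every element of $L$ that is \emph{not} a multiple of $d$ (with multiplicities). This $J$ trivially satisfies the disjointness requirement since no distinct value of $L$ appears on both sides. By construction, every element of $L\setminus J$ is a multiple of $d$, and since $d\mid v$, the quantity $\gcd\{v,\ell\mid \ell\in L\setminus J\}$ is a multiple of $d$, hence $\geq d$ (with the natural convention that it equals $v$ when $L\setminus J$ is empty). The resulting inequality $|J|\geq d-1$ immediately translates, via $|L\setminus J|=(v-1)-|J|$, into the bound that the number of multiples of $d$ in $L$ is at most $v-d$.

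For the converse $(\ref{B}) \Rightarrow (\ref{HR})$, the argument is symmetric. Given an arbitrary admissible sublist $J$ with $J\cap(L\setminus J)=\emptyset$, I would let $d=\gcd\{v,\ell\mid \ell\in L\setminus J\}$ (setting $d:=v$ when $L\setminus J=\emptyset$) and apply (\ref{B}) at this particular divisor $d$ of $v$. Since $L\setminus J$ sits inside the set of multiples of $d$ in $L$, its size is at most $v-d$, and rearranging gives $|J|\geq d-1$, as required.

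There is no real obstacle beyond bookkeeping: the heart of the proof is the observation that, relative to a chosen divisor $d$, the \emph{worst} sublist $J$ in (\ref{HR}) is precisely the one discarding all multiples of $d$, and this identification makes the two conditions translate into one another. The only minor care needed is to fix a convention for the gcd when $L\setminus J$ is empty, which is harmless.
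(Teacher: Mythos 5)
Your proposal is correct and follows essentially the same route as the paper's own proof: testing condition (\ref{HR}) on the sublist of non-multiples of $d$ for one direction, and applying condition (\ref{B}) to $d=\gcd\{v,\ell \mid \ell\in L\setminus J\}$ for the other, with the identity $|J|+|L\setminus J|=v-1$ doing the bookkeeping. (Your worry about a convention for the empty case is unnecessary, since $v$ always belongs to the set whose gcd is taken.)
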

\begin{proof}
Assume that (\ref{HR}) holds and let $d$ be a divisor of $v$. Consider the sublist $J$ of $L$ consisting of all elements of $L$ not divisible by $d$.
So $J \ \cap \ (L\setminus J)$ is empty and  $L\setminus J$ is the sublist of all elements of $L$ divisible by $d$. Thus $d$ is a divisor of
$\gcd\{v,\ell \ | \ \ell \in L\setminus J\}$ and then, by (\ref{HR}), we have $|J|\geq d-1$. This implies $|L\setminus J|=v-1-|J|\leq v-d$, i.e., (\ref{B}) holds.

Conversely, assume that (\ref{B}) holds, let $J$ be a sublist of $L$ with $J \ \cap \ (L\setminus J)=\emptyset$, and set $d=\gcd\{v,\ell \ | \ \ell \in L\setminus J\}$.
Of course every element of $L\setminus J$ is a multiple of $d$ which is a divisor of $v$. Then, by (\ref{B}), we have $|L\setminus J|\leq v-d$.
It follows that $|J|=v-1-|L\setminus J|\geq d-1$, i.e., (\ref{HR}) holds.
\end{proof}

We remark that Seamone and Stevens gave another condition which is similar to (\ref{B}) but not completely correct.
They in fact just require that ``for all divisors $d$ of $v$ there are no more than $v-d$ elements
$\ell$ of $L$ such that $\gcd(\ell,v)=d$" (see \cite{SS}, Conjecture 1.1) which is not equivalent to (\ref{HR}).
For instance, if $v=8$
and $L=\{2,2,2,4,4,4,4\}$, we see that condition (2) (hence, equivalently, (1))
does not hold but that  the condition of Seamone and Stevens does.
It is also evident that no Hamiltonian path of $K_8$ may have $L$ as its list of edge-lengths.

We also point out that in the statement of the conjecture of Horak and Rosa, the actual conjecture is the sufficiency. Indeed Horak and Rosa themselves
remarked that its necessity is certainly true but they did not prove it explicitly. Here, for
convenience of the reader, we give a proof in terms of condition (2).

\begin{prop}
The list $L$ of edge-lengths of any Hamiltonian path of $K_v$ satisfies condition $(\ref{B})$.
\end{prop}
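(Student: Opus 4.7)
The plan is to exploit the quotient structure modulo $d$ and count runs along the path.

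First I would make the basic observation that, when $d\mid v$, an edge $[x,y]$ of $K_v$ has length a multiple of $d$ if and only if $x\equiv y\pmod d$. This is because $\ell(x,y)=\min(|x-y|,v-|x-y|)$, and since $v$ is a multiple of $d$, divisibility of $\ell(x,y)$ by $d$ is equivalent to divisibility of $|x-y|$ by $d$. So I only need to count, among the $v-1$ edges of the Hamiltonian path $H$, how many join two vertices in the same residue class mod $d$.

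Next I would look at the sequence of residues $r_0,r_1,\ldots,r_{v-1}\in\mathbb{Z}/d\mathbb{Z}$ obtained by reducing the consecutive vertices of $H$ modulo $d$. Because $H$ is Hamiltonian and the $v$ vertices split evenly into $d$ residue classes of size $v/d$, each residue appears in this sequence (in fact exactly $v/d$ times). Group the sequence into maximal runs of constant residue; let $r$ denote the number of such runs. Since each of the $d$ residues must occur in at least one run, we have $r\geq d$.

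Finally I would count: the edges of $H$ joining vertices in different residue classes correspond exactly to the $r-1$ transitions between consecutive runs, while the edges joining vertices in the same class are the remaining $v-1-(r-1)=v-r$ edges. Combining with $r\geq d$ gives that the number of edges of $H$ of length divisible by $d$ is at most $v-d$, which is exactly condition (\ref{B}). There is really no obstacle here; the only point to state carefully is the equivalence ``length divisible by $d$ $\Longleftrightarrow$ endpoints congruent mod $d$'', which fails in general but holds precisely because $d\mid v$.
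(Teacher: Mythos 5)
Your proof is correct and essentially the same as the paper's: your maximal runs of constant residue along $H$ are exactly the connected components of the graph obtained from $H$ by deleting the edges of length not divisible by $d$, which is the paper's decomposition, and both arguments rest on the same key equivalence that (since $d\mid v$) an edge has length divisible by $d$ iff its endpoints are congruent mod $d$. The only immaterial difference is the last step: you obtain $r\geq d$ by noting each residue class must occupy at least one run, while the paper bounds each component's size by $v/d$ and concludes from $v\leq r\cdot\frac{v}{d}$.
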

\begin{proof}
Let $H$ be a Hamiltonian path of $K_v$ with $\ell(H)=L$ and let $d$ be a divisor of $v$.
Denote by $N$ the number of non-multiples of $d$ appearing in $L$ and note that
condition (2) is equivalent to saying that $N\geq d-1$.
To prove that this inequality holds, we consider the graph $\Gamma$
obtainable from $H$ by deleting all $N$ edges whose length is not divisible by $d$.
It is easy to see that $\Gamma$ has exactly $N+1$ connected components
some of which may be just isolated vertices.
It is also clear that all vertices of every connected component $C$ of $\Gamma$
are in the same residue class modulo $d$ so that $C$ has
at most $\frac{v}{d}$ vertices, since every class of residues modulo $d$ intersects $\{0,\ldots,v-1\}$ in a set of that size.
It follows that $v=|V(\Gamma)|\leq (N+1)\frac{v}{d}$, i.e. $N\geq d-1$.
\end{proof}

Before giving the main result of this paper we would like to show some
connections between $\BHR$-problem and \emph{graph
decompositions} (see \cite{BE} for general background on this subject).

A list $\Omega$ of elements of an additive group $G$ is
said to be \emph{symmetric} if $0 \notin \Omega$ and the multiplicities
of $g$ and $-g$ in $\Omega$ coincide for any $g \in G$.
If $\Omega$ does not have repeated elements then one can consider the \emph{Cayley
graph on $G$ with connection set $\Omega$}, denoted $Cay[G:\Omega]$, whose
vertex-set is $G$  and in which $[x,y]$ is an edge if and only if $x-y \in \Omega$.
Cayley graphs have a great importance in combinatorics
and they are precisely the graphs admitting an automorphism
group acting sharply transitively on the vertex-set (see, e.g., \cite{GR}).
If, more generally, the symmetric list $\Omega$ has repeated
elements one can consider the \emph{Cayley multigraph on $G$ with
connection multiset $\Omega$}, also denoted $Cay[G:\Omega]$
and with vertex-set $G$, where the multiplicity of an edge $[x,y]$ is
the  multiplicity of $x-y$ in $\Omega$ (see, e.g., \cite{BCD, BMnew}).
Note that any Cayley (multi)graph
 is regular of degree the size of its connection (multi)set.

 Now recall that the \emph{list of differences} of a simple graph $\Gamma$
with vertices in an additive group $G$ is the symmetric multiset $\Delta(\Gamma)$
of all possible differences $x-y$ with $(x,y)$ an ordered pair of adjacent
vertices of $\Gamma$ (see, e.g., \cite{BP}).

We point out that Buratti communicated his conjecture to Rosa
using the above terminology:
``For any prime $p$ and any symmetric list $\Lambda$ of $2p-2$ elements
of $\Z_p$, there exists a Hamiltonian path $H$ of $K_p$ such that
$\Delta(H) = \Lambda$''. It is clear that the conjecture of Horak and Rosa can be
also reformulated is a similar way.

The reader who is familiar with graph decompositions with a {\it regular automorphism group}
can easily recognize that if $H$ is a Hamiltonian path of
the complete graph with vertex-set an additive group $G$,
then the collection $\{H+g\ |\ g\in G\}$ is a decomposition of
$Cay[G:\Delta(H)]$ into Hamiltonian paths ($H+g$
denotes the path obtainable from $H$ by replacing each
$x\in V(H)$ with $x+g$).
This observation allows us to reformulate Buratti's conjecture
in the following form:
``Any Cayley multigraph of order a prime number $p$ and degree
$2p-2$ admits a cyclic decomposition into Hamiltonian paths''.
The more general $\BHR(L)$ can be reformulated as follows.

\begin{conj} A Cayley multigraph {\it $Cay[\Z_v:\Lambda]$ admits a cyclic decomposition into Hamiltonian paths if and only if $\Lambda=L \ \cup \ -L$
with $L$ satisfying condition $(\ref{B})$}.\end{conj}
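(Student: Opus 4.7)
The plan is to prove this statement by exhibiting a dictionary between cyclic decompositions of $Cay[\Z_v:\Lambda]$ into Hamiltonian paths and Hamiltonian paths of $K_v$ with prescribed difference list $\Lambda$, thereby reducing the claim to the already-stated conjecture $\BHR(L)$. The key translation is that, for a Hamiltonian path $H$ of $K_v$ viewed with vertex set $\Z_v$, each edge $[x,y]$ contributes one copy of $\ell(x,y)=\min(|x-y|,v-|x-y|)$ to $\ell(H)$ and the two opposite elements $\pm(x-y)$ to $\Delta(H)$. Hence $\Delta(H)=L\cup -L$ for $L=\ell(H)$, and conversely every symmetric multiset $\Lambda$ of $2(v-1)$ nonzero elements of $\Z_v$ writes uniquely as $L\cup -L$ with $L$ a multiset of $v-1$ positive integers bounded by $\lfloor v/2\rfloor$.

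For the forward direction, assume that $Cay[\Z_v:\Lambda]$ admits a cyclic decomposition $\mathcal{D}$ into Hamiltonian paths. By cyclicity, $\mathcal{D}=\{H+g \,:\, g\in\Z_v\}$ for a single representative path $H$. A multiplicity count then shows $\Delta(H)=\Lambda$: each edge $\{x,y\}$ of $Cay[\Z_v:\Lambda]$ appears with multiplicity equal to that of $y-x$ in $\Lambda$, while across the translates it is covered with multiplicity equal to that of $y-x$ in $\Delta(H)$, and matching these across all edges forces the equality. Thus $\Lambda = L\cup -L$ with $L=\ell(H)$ the edge-length list of a Hamiltonian path of $K_v$, and the necessity proposition established earlier in the excerpt shows that $L$ satisfies $(\ref{B})$. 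For the converse, assume $\Lambda=L\cup -L$ with $L$ satisfying $(\ref{B})$. Then $\BHR(L)$ furnishes a Hamiltonian path $H$ of $K_v$ with $\ell(H)=L$, hence $\Delta(H)=\Lambda$, and the observation recalled immediately above the statement yields at once that $\{H+g \,:\, g\in\Z_v\}$ is a cyclic decomposition of $Cay[\Z_v:\Lambda]$ into Hamiltonian paths.

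The main obstacle is only apparent: the converse direction absorbs the still-open sufficiency half of $\BHR(L)$ as a black box, so the statement is not an independent theorem but rather a verbatim restatement of $\BHR(L)$ in the language of Cayley multigraph decompositions. The only genuinely combinatorial content is the multiplicity bookkeeping in the forward direction and the routine observation, already made in the paper, that the $\Z_v$-orbit of a Hamiltonian path $H$ of $K_v$ decomposes the associated Cayley multigraph $Cay[\Z_v:\Delta(H)]$ into Hamiltonian paths; both amount to straightforward checks.
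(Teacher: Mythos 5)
The statement you are asked about is labelled a \emph{Conjecture} in the paper: it is $\BHR(L)$ transcribed into the language of Cayley multigraphs, and the paper offers no proof of it beyond the one-line observation that the translates $\{H+g \mid g\in\Z_v\}$ of a Hamiltonian path $H$ decompose $Cay[\Z_v:\Delta(H)]$ into Hamiltonian paths. Your dictionary between edge-length lists and symmetric difference lists, and your reduction of both directions to $\BHR(L)$ together with the necessity proposition, is exactly the translation the paper intends, and you correctly flag that the sufficiency half remains open rather than proved. The one step to watch is the phrase ``by cyclicity, $\mathcal{D}=\{H+g : g\in\Z_v\}$'': this is immediate if a cyclic decomposition is \emph{defined} as the orbit of a single path (the reading consistent with the paper's preceding paragraph), but if it only means $\Z_v$-invariant you must also exclude short orbits, i.e.\ Hamiltonian paths fixed by a nontrivial translation, which do exist when $v$ is even (e.g.\ $[1,0,2,3]$ in $\Z_4$ is fixed by translation by $2$), so that case would need a separate argument.
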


The following more general problem
can be considered:
``Given a simple Cayley graph $Cay[G:\Omega]$ and given
another graph $\Gamma$, determine all symmetric lists $\Lambda$ of
elements of $G$ for which there exists a subgraph $\Gamma'$
of $Cay[G:\Omega]$ isomorphic to $\Gamma$ such that
$\Delta(\Gamma') = \Lambda$''.
Some results in this direction have been presented by Seamone and Stevens \cite{SS}.\\

\noindent The main result of the present paper is the following.
\begin{thm}\label{Bur-Rosa}
$\BHR(\{1^a,2^b,3^c,5^d\})$ holds for all integers $a,b,c,d\geq0$.
\end{thm}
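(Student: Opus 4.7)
My plan is to proceed by induction on $d$, the number of fives in $L$. The base case $d=0$ is immediate from the theorem of Capparelli and Del Fra recalled in the introduction, which establishes $\BHR(\{1^a,2^b,3^c\})$ for all $a,b,c\geq0$. Writing $v=a+b+c+d+1$, I note that the constraints imposed by condition (\ref{B}) reduce to three potentially binding inequalities: $b\le v-2$ if $2\mid v$, $c\le v-3$ if $3\mid v$, and $d\le v-5$ if $5\mid v$; this is because $1$, $2$, $3$, $5$ are pairwise coprime apart from the trivial divisor, so no larger divisor of $v$ can produce a nontrivial constraint.

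For the inductive step ($d\ge 1$), I would first attempt an edge-swap argument in the spirit of the constructions in \cite{HR,CDF}. Given a Hamiltonian path $H'$ realising some list $L'=\{1^{a'},2^{b'},3^{c'},5^{d-1}\}$ on the same vertex-set $\{0,1,\ldots,v-1\}$ (so that the notion of length is the same), I would look for two consecutive edges $[x,y],[y,z]$ of $H'$ in which a local replacement $y \mapsto y'$ converts the pair of lengths $(\ell(x,y),\ell(y,z))$ into a new pair including one extra $5$ and exchanging the other length in a controlled way. Iterating such swaps to adjust the multiplicities $(a,b,c)$ to the target values would reduce the problem to the base case. An alternative I would pursue if the swap bookkeeping becomes too delicate is a direct construction via residue classes modulo $5$: partition $\{0,1,\ldots,v-1\}$ into five chains traversed internally by edges of length $5$, then stitch the chains together by bridges of lengths $1$, $2$, $3$ whose pattern is dictated by $(a,b,c)$.

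The main obstacle will be case analysis at the boundary of (\ref{B}). When $v$ is divisible by $2$, $3$, or $5$ and the corresponding count $b$, $c$, or $d$ is close to saturating its upper bound, the generic swap or stitching construction may fail to accommodate the required multiplicities, and explicit \emph{ad hoc} families of Hamiltonian paths—parametrised by the residue of $v$ modulo $30=2\cdot3\cdot5$—will have to be written down and checked. In addition, small values of $v$, below the threshold at which the inductive argument becomes self-sustaining, will likely need a separate treatment, potentially via computer enumeration. For each explicit construction one must verify two things: that the listed sequence of vertices is a permutation of $\{0,1,\ldots,v-1\}$, and that the multiset of consecutive edge-lengths matches $\{1^a,2^b,3^c,5^d\}$ exactly. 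The bulk of the work of the theorem should lie in designing enough such families to cover every admissible $(a,b,c,d)$ not handled by the inductive reduction.
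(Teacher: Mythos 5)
Your opening reduction of condition (\ref{B}) to the three inequalities $b\le v-2$ when $2\mid v$, $c\le v-3$ when $3\mid v$, $d\le v-5$ when $5\mid v$ is correct and agrees with what is used in Section 4. Everything after that, however, is a plan rather than a proof, and the one concrete mechanism you do describe does not work as stated: in a Hamiltonian path of $K_v$ every vertex of $\{0,\dots,v-1\}$ already occurs, so a ``local replacement $y\mapsto y'$'' of an interior vertex is impossible unless $y'=y$; any genuine modification must transpose $y$ with some vertex elsewhere on the path, which disturbs up to four edge-lengths at once, and you never exhibit a single swap that injects a $5$ while controlling the other multiplicities. The fallback ``stitch five chains of step $5$ by bridges of lengths $1,2,3$'' is likewise only named, not carried out, and you yourself defer all boundary cases to ``explicit ad hoc families \ldots\ to be written down and checked.'' Since the entire content of this theorem consists precisely of such explicit constructions, nothing has actually been proved.

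For comparison, the paper does not fix $v$ and induct on $d$. It works with \emph{linear} realizations and the concatenation $(*)$, which glues a perfect linear realization $RL_1$ on $\{0,\dots,s\}$ to a realization $rL_2$ shifted by $s$, thereby building realizations of longer lists from shorter ones; starting from Theorem \ref{KDF} and the seed realizations of Tables \ref{TablePerfect} and \ref{TableLinear} (plus the catalogue in \cite{PP}), this yields Theorems \ref{lin a=0} and \ref{MainResult}, which determine \emph{exactly} which lists $\{1^a,2^b,3^c,5^d\}$ admit linear realizations --- note the many exceptional families, which your sketch does not anticipate. Remark \ref{cyclin} then converts these linear realizations into cyclic ones (legitimate here because all lengths are at most $\lfloor(|L|+1)/2\rfloor$ once $|L|\ge 9$), and Section 4 closes the gap with $86$ explicitly written infinite families of cyclic realizations for the lists that satisfy (\ref{B}) but have no linear realization. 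To turn your proposal into a proof you would need either to supply all of those constructions yourself or to find a working replacement for the inductive step, and the replacement you propose is not one.
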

\noindent So, in particular, we have found the first set $S$ of size four for which we can say that
$\BHR(L)$ is true when the underlying-set of the list $L$ is $S$.

In Section 2 we will introduce \emph{cyclic} and \emph{linear realizations} which are the fundamental
tools we have used to obtain our result. Also, for the convenience of the reader, we will explain the strategy
which allows us to get Theorem \ref{Bur-Rosa}, starting from the results contained in \cite{CDF},
and how this strategy can be applied in a general case.
In this way the reader will be able to understand why we have chosen  $S=\{1,2,3,5\}$
instead of the (maybe) more natural choice $S=\{1,2,3,4\}$.
In Section 3 we will present all the constructions of the linear realizations and Section
4 is dedicated to the proof of Theorem \ref{Bur-Rosa}.
Finally, in Section 5 we will make some final remarks about possible future results on
BHR-problem.

\section{Cyclic and linear realizations}
In this section we are going to introduce cyclic and linear realizations
of a list $L$
and to explain their relationship with $\BHR(L)$.\\
A \emph{cyclic realization} of a list $L$ with $v-1$ elements each
from the set
$\{1,\ldots,\lfloor\frac{v}{2}\rfloor\}$ is a Hamiltonian path $[x_0,x_1,\ldots,x_{v-1}]$
of $K_{v}$ such that the multiset of edge-lengths
$\{\ell(x_i,x_{i+1})\ |\ i=0,\ldots,v-2\}$ equals $L$. So it is clear that
$\BHR(L)$ can be so reformulated:
 every such a list $L$ has a cyclic realization
if and only if condition (\ref{B}) is satisfied.
For example the path $[0,3,8,2,5,10,4,9,1,6,7]$ is a cyclic realization of
$L=\{1,3^3,5^6\}$.

In order to investigate $\BHR$-problem, it is useful to introduce also
(perfect) linear realizations, see \cite{CDF}.
A \emph{linear realization} of a list $L$ with $v-1$ positive integers not exceeding
$v-1$ is a Hamiltonian path
$[x_0,x_1,\ldots,x_{v-1}]$ of $K_v$
such that $L=\{|x_i-x_{i+1}|\ |\ i=0,\ldots,v-2\}$.
For instance, one can easily check that the path $[0,5,3,1,6,9,4,2,7,12,10,11,8]$ is a
linear realization
of $L=\{1,2^4,3^2,5^5\}$.

In this paper we shall choose $0$ as first vertex of any path.

\begin{rem}\label{cyclin}
Every linear realization of a list $L$ can be viewed as a
cyclic realization
of a suitable list $L'$ but not necessarily of the same list. For example, the path
$[0,2,3,5,4,1,6]$ is a linear realization of
$L=\{1^2,2^2,3,5\}$ and a cyclic realization of $L'=\{1^2,2^3,3\}$.
Anyway if all the elements in the list are less than or equal to $\lfloor\frac{|L|+1}{2}\rfloor$, then every linear realization of $L$ is
also a cyclic realization of the same list $L$ $($see Section $3$ of \cite{HR}$)$.
\end{rem}

We remark that in order to prove $\BHR(L)$ we have to find cyclic realizations
of $L$, nevertheless we focus on linear
realizations
because they will be used in an inductive construction, as we are going to explain.

Following \cite{CDF}, we will say that a linear realization of a list $L$ is \emph{perfect},
and we denote it by $RL$,
if the terminal vertex of the path is labelled by the largest element. We denote by $rL$
a linear realization which may or may not be perfect. Given a perfect realization
$RL_1=[0,x_1,\ldots,x_{s-1},s]$
and another realization
$rL_2=[0,y_1,\ldots,y_t]$, not necessarily perfect, we may
form a new realization
$r(L_1\cup L_2)$ denoted by $RL_1+rL_2$ so defined
$$(*)\quad\quad RL_1+rL_2=[0,x_1,\ldots,x_{s-1},s,y_1+s,\ldots,y_t+s].$$
It is important to underline that the previous construction, in general, does not work if
we consider
\emph{cyclic} realizations.\\
As we shall see, this construction plays a fundamental role in this paper.\\

As we have already remarked, to provide a complete proof of the conjecture seems to be
very difficult.
Here we present the strategy we have used to solve BHR$(\{1^a,2^b,3^c,5^d\})$ and that can
be applied to any given list. Our starting point is the following result about linear
realizations, obtained by Capparelli and Del Fra in
\cite{CDF}.

\begin{thm}{\rm\cite{CDF}} \label{KDF}
A multiset $\{1^a,2^b,3^c\}$ has a linear realization if, and only if, the integers $a,b,c$
satisfy one of the following conditions
\begin{enumerate}
\item[\rm{(i)}] $a=0$, $b\geq 4$, $c\geq 3 $;
\item[\rm{(ii})] $a=0$, $b=3$ and $c\neq0, 3k+9$ with $k\geq0$;
\item[\rm{(iii)}] $a=0$ and $(b,c)\in \{(2,2),(2,3),(4,1),(4,2),(7,2),(8,2)\}$;
\item[\rm{(iv)}] $a\geq 2$ and  $b=0$;
\item[\rm{(v)}] $a\geq 1$ and $c=0$;
\item[\rm{(vi)}] $a,b,c\geq 1$ with $(a,b,c)\neq (1,1,3k+5)$ and $k\geq 0$.
\end{enumerate}
\end{thm}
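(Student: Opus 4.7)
The plan is to split the biconditional into necessity (the excluded triples $(a,b,c)$ admit no linear realization) and sufficiency (for each triple listed in (i)--(vi), a realization is constructed).

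For necessity, the natural tool is a modulo-$3$ coloring of the vertex set $\{0,1,\ldots,v-1\}$, where $v=a+b+c+1$. Since an edge of length $3$ preserves residue modulo $3$ while edges of lengths $1$ and $2$ shift the residue by $\pm 1$, any linear realization decomposes into exactly $a+b+1$ maximal monochromatic runs separated by length-$1$ or length-$2$ transition edges. Each run is a contiguous sub-arithmetic progression of common difference $3$ within its residue class, and the collection of runs must partition all of $\{0,\ldots,v-1\}$ residue class by residue class. Comparing the number of runs in each class to its cardinality and to the starting vertex $0$ immediately rules out the cases $a=b=0$ (only one run, but three residue classes for $v\geq 3$), $a=0,b=1$ (only two runs), and $a=1,b=0$ (only two runs). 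A mod-$2$ argument (lengths $1$ and $3$ flip parity while length $2$ preserves it) gives a complementary constraint used to eliminate the remaining small cases of (iii).

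For sufficiency, the strategy is to write $\{1^a,2^b,3^c\}$ as a disjoint union of smaller lists whose perfect realizations are known and then compose them via the concatenation operator $(*)$ introduced in Section~2. First, one exhibits a small library of explicit perfect realizations that serve as \emph{seeds} for each case of the theorem. Second, one identifies a few \emph{increment} building blocks, each a perfect realization of a short list that adds a fixed amount to exactly one of $a$, $b$, or $c$ when appended via $(*)$. Iterating the increments from a seed yields all $(a,b,c)$ in a cofinite subset of the admissible region; the remaining boundary triples are handled by direct construction, which is precisely the role of the sporadic list in (iii).

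The main obstacle will be the two infinite exceptional families $(1,1,3k+5)$ and $(0,3,3k+9)$. Showing these admit no realization requires a finer analysis than the basic run-counting argument: one must track the possible sequences of transition residues and run sizes, starting from the forced initial run containing $0$, and derive a congruence obstruction modulo $3$ that forces $c$ to avoid a particular residue class. On the constructive side, the delicate bookkeeping is to ensure that the seed-plus-increment scheme covers every admissible triple in (vi) without gaps, which may require several parallel families of seeds and increments to handle the different congruence classes of $a$, $b$, $c$ modulo the step sizes of the available increments.
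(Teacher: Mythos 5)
First, note that the paper does not prove this statement at all: Theorem \ref{KDF} is quoted verbatim from Capparelli and Del Fra \cite{CDF} and used as a black box, so there is no internal proof to compare your attempt against. Judged on its own terms, your outline identifies the right tools --- the mod-$3$ run decomposition for necessity and the concatenation $(*)$ of perfect realizations for sufficiency, which is exactly the machinery this paper deploys for its own results on $\{1^a,2^b,3^c,5^d\}$ --- but as written it is a plan rather than a proof, and the steps you defer are precisely the ones that carry all the difficulty.

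Concretely: the basic run count $a+b+1\geq 3$ only eliminates the triples with $a+b\leq 1$ (the mod-$3$ analogue of Remark \ref{a+b+c}); it says nothing about why $(0,3,3k+9)$, $(1,1,3k+5)$, or the complement of the sporadic set in (iii) fail. For these you would need to use that each maximal run is a monotone arithmetic progression that must exactly tile its residue class, track which endpoints of which classes the transition edges can join, and extract the stated congruence conditions; you assert that such a refinement exists but do not carry it out, and the ``mod-$2$ argument'' you invoke for the remaining cases of (iii) cannot by itself separate the non-realizable $(0,6,2)$ from the realizable $(0,7,2)$. On the sufficiency side, ``seeds plus increments covering a cofinite set, boundary cases by hand'' is the correct shape of the argument, but without exhibiting the seed realizations, the increment blocks (which must be \emph{perfect} to be usable on the left of $(*)$, a nontrivial constraint), and a check that every admissible residue class of $(a,b,c)$ is actually reached, the positive half of the theorem remains unsubstantiated. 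In short, the proposal is a plausible road map consistent with \cite{CDF}, but it is not yet a proof.
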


Next, we observe that if $L$ has only one symbol $s$ then it admits no
linear realization unless $s=1$ and
in this case the trivial perfect realization is $R\{1^a\}=[0,1,2,\ldots,a]$. So in the
following we assume
$L=\{1^a,2^b,3^c,5^d\}$
with at least two integers in $\{a,b,c,d\}$ greater than or equal to $1$. Also, in view of
Theorem \ref{KDF}, we can
assume $d\geq 1$.
\begin{rem}\label{RemA>a}
If there exists a linear realization of $L=\{1^a,2^b,3^c,5^d\}$, then
$R\{1^{A-a}\}$ $+rL$ is a
linear realization of
$L'=\{1^A,2^b,3^c,5^d\}$ for any $A\geq a$.
\end{rem}

\begin{rem}\label{a+b+c}
If $a+b+c<4$ and $d\geq1$, the list $L=\{1^a,2^b,3^c,5^d\}$ cannot admit a linear
realization,
since it is not possible
to obtain all congruence classes modulo $5$.
\end{rem}

Thanks to the strategy we are going to explain, we obtained the following theorems.

\begin{thm}\label{lin a=0}
Assume $b\geq 3$ and $c,d\geq 1$. Then the
list $L=\{2^b,3^c,5^d\} $ has a linear realization
if, and only if,
$(b,c,d)\neq (3,1,5k+8)$, $(\bar b,1,1)$ with $k\geq 0$ and either $\bar b=7,8$ or
$\bar b\geq 11$.
\end{thm}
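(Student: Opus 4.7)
The plan is to prove both directions of the equivalence separately. For the necessity, each excepted triple needs a combinatorial obstruction; for the sufficiency, realizations are built from a finite family of base cases via the gluing rule $(*)$.

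For necessity, I would first treat $(b,c,d)=(\bar b,1,1)$. Since $L$ contains only two odd-length edges (one $3$ and one $5$), any linear realization decomposes into three maximal runs of $2$-edges separated by those two odd edges. Each run is confined to a single parity class of $\{0,1,\ldots,\bar b+2\}$, and inside a parity class a $\pm 2$-walk without repeats must cover a contiguous interval monotonically. Starting from $0$ forces the first even run to be $0\to 2\to\cdots\to 2b_1$; the third even run is then monotone on the remaining even vertices $\{2(b_1+1),\ldots\}$, and the middle run is the monotone traversal of all odd vertices $\{1,3,\ldots,v-2\}$ (or its reverse). Matching endpoints across the two odd-length jumps reduces to a short system of constraints on $b_1$, and direct enumeration shows no admissible $b_1$ exists when $\bar b\in\{7,8\}$ or $\bar b\geq 11$, while a realization does exist for $\bar b\in\{3,4,5,6,9,10\}$. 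For $(b,c,d)=(3,1,5k+8)$, since $v=5k+13$ is coprime to $5$, condition~(\ref{B}) is vacuous and a finer argument is needed. I would project the path modulo~$5$: the $5$-edges are stationary, while the $b+c=4$ remaining edges are each $\pm 2\pmod 5$. The projected walk on $\Z_5$ therefore partitions the path into exactly $5$ maximal constant-residue runs, each of which (within its residue class) is a monotone $\pm 5$-traversal of a contiguous subset. Enumerating the $\Z_5$-walks of length $4$ starting at $0$ with $\pm 2$ steps, together with the residue counts $(k+3,k+3,k+3,k+2,k+2)$ and the requirement that exactly one transition be of length $3$ (the others of length $2$), rules out every possibility.

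For sufficiency, I would first construct by hand a finite family of \emph{perfect} linear realizations $R\{2^{b_0},3^{c_0},5^{d_0}\}$ for small parameter values, to serve as modular building blocks. Theorem~\ref{KDF} already supplies a large stock of perfect realizations with $d_0=0$, and a handful of additional perfect realizations involving $5$s can be added to cover the residue-mod-$5$ structure. For a generic non-excepted $(b,c,d)$, the idea is to split $L=L_1\cup L_2$ with $L_1$ admitting a perfect realization from the base family and $L_2$ admitting an (inductively constructed) realization, then apply $(*)$ to obtain a realization of $L$. The induction typically decreases $d$ (or $b$), with splits chosen so that both $L_1$ and $L_2$ remain outside the excepted set.

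The main obstacle will be the sufficiency, because the exceptional set is not uniform: the ``good'' values $\bar b\in\{9,10\}$ for $c=d=1$ sit between the ``bad'' sets $\{7,8\}$ and $\{\bar b\geq 11\}$, and the family $(3,1,5k+8)$ punches holes in an otherwise regular pattern. The inductive construction must therefore be designed so that every non-excepted triple is reachable by gluing, while no intermediate list ever falls in the excepted set; this typically requires dedicated ad-hoc constructions at borderline parameters (in particular for small $c$ or small $d$). On the necessity side, the delicate point is that the mod-$5$ argument for $(3,1,5k+8)$ must be pushed beyond a divisibility check to a genuine combinatorial case analysis of the $\Z_5$-walks and their admissible transitions.
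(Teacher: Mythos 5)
Your necessity arguments are sound and, notably, more self-contained than what the paper itself offers: for both exceptional families the paper simply asserts non-existence (``it is not hard to see'', or a computer check), whereas your parity-run analysis for $\{2^{\bar b},3,5\}$ and your mod-$5$ projection for $\{2^3,3,5^{5k+8}\}$ are genuine structural proofs. I checked the latter: with $v=5k+13$ the four non-$5$ edges cut the path into five runs, one per residue class, each run forced to traverse its class monotonically between its two extremal vertices; the alternation forces the two ``low-end'' transitions to pair $\{1,2,3,4\}$ into $\{1,3\}\cup\{2,4\}$ (both of length $2$), and the two ``high-end'' transitions are likewise forced to length $2$, so no length-$3$ edge can ever be accommodated. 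This is correct and rules out $\{2^3,3,5^{5k+8}\}$ cleanly. (One small slip: condition (\ref{B}) concerns cyclic realizations and is not the relevant trivial obstruction here; the relevant one is Remark \ref{a+b+c}, which is satisfied since $b+c=4$ --- but your substantive point, that a finer argument is needed, stands.)

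The gap is in the sufficiency, which is where essentially all of the content of this theorem lives. You correctly identify the strategy --- a finite stock of perfect realizations glued via $(*)$, with the induction steered around the exceptional set --- and this is exactly what the paper does. But you do not exhibit the base family, nor verify that the recursion closes. In the paper this requires the perfect realizations of Table \ref{TablePerfect} (e.g.\ $R\{2^4,5^{5k+3}\}$, $R\{3^6,5^{5k+2}\}$, $R\{2^6,5^{5k+1}\}$, \ldots), the non-perfect seeds of Table \ref{TableLinear}, the auxiliary Lemmas \ref{bc}--\ref{(0,b,0,d)} and Propositions \ref{(0,1,c,d)}--\ref{(0,2,c,d)} handling $b\le 2$ and small $d$, a five-way case split on $d\bmod 5$ crossed with small values of $b$ and $c$, and on the order of a hundred ad-hoc constructions deferred to \cite{PP}. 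Your proposal names the difficulty (borderline parameters, the non-monotone exceptional set in $\bar b$, the holes punched by $(3,1,5k+8)$) but does not resolve it; as written it is a plan for a proof of sufficiency rather than a proof. To complete it you would need, at minimum, to write down a concrete generating set of perfect realizations covering all residues of $d$ modulo $5$ and all small $(b,c)$, and then check exhaustively that every non-excepted triple with $b\geq 3$, $c,d\geq 1$ splits as $L_1\cup L_2$ with $RL_1$ in your stock and $L_2$ already realized --- there is no shortcut around that case analysis.
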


\begin{thm}\label{MainResult}
Assume $a,d\geq 1$. Then the
list $L=\{1^a,2^b,3^c,5^d\} $ always has a linear realization except
when $(a,b,c,d)$ has one of the following forms:
\begin{enumerate}
\item[\rm{(i)}] $(1,0,c,1)$ with either $c\leq 3$ or $c\geq 8$;
\item[\rm{(ii)}] $(1,0,c,2)$ with either $c\leq 2$ or $c\geq 7$;
\item[\rm{(iii)}] $(1,0,0,d)$, $(1,0,1,d)$, $(1,0,2,d)$, $(1,1,0,d)$,
$(1,1,1,d)$, $(1,2,0,d)$, $(2,0,0,d)$, $(2,0,1,d)$, $(2,1,0,d)$, $(3,0,0,d)$;
\item[\rm{(iv)}] $(1,0,3,5k+6)$, $(1,0,3,5k+8)$, $(1,0,3,5k+10)$,
$(1,1,2,5k+7)$, $(1,1,2,5k+9)$, $(1,2,1,5k+8)$,
$(1,3,0,5k+7)$, $(1,3,0,5k+9)$, with $k\geq 0$;
\item[\rm{(v)}] $(3,1,0,5k+8)$, $(4,0,0,5k+8)$, with $k\geq 0$.
\end{enumerate}

\end{thm}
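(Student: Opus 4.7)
The plan is to treat the two directions of the ``iff'' separately: necessity by structural counting, sufficiency by an inductive concatenation built on Section~3 base cases.

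\textbf{Necessity.} The tuples in~(iii) all satisfy $a+b+c<4$ and are ruled out uniformly by Remark~\ref{a+b+c}. For the remaining families, the key observation is that $K_v$ has exactly $v-\ell$ edges of length~$\ell$. In the infinite subfamilies of~(i) and~(ii) (large $c$, small $d$), the list is forced to use every length-$3$ edge; these edges form an explicit union of short paths in $K_v$, and a direct check shows that the few length-$1$ and length-$5$ edges available cannot splice them into a single Hamiltonian path. The $5k+\mathrm{const}$ families of~(iv) and~(v) are eliminated by the same endpoint-splicing analysis, now carried out modulo $5$ so that a single argument covers all values of $k$. The isolated small tuples in~(i) and~(ii) fail either because $5>\lfloor v/2\rfloor$, or because the unique length-$5$ edge $\{0,5\}$ has no extension into a Hamiltonian path with the prescribed remaining multiset.

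\textbf{Sufficiency.} The central tool is the concatenation $(*)$, which builds a linear realization of $L_1\cup L_2$ from a perfect $RL_1$ and any $rL_2$. Together with Remark~\ref{RemA>a}, the task reduces to realizing, for each non-excluded $(b,c,d)$, the smallest admissible value of $a$. The plan is an induction on $d$: Section~3 supplies perfect (or near-perfect) realizations of a finite family of small base lists containing several fives; in the inductive step one peels such a block off the front and recurses on a smaller value of $d$. When $a=0$, Theorem~\ref{lin a=0} serves as a base case; when $d$ hits $0$, Theorem~\ref{KDF} takes over. Remark~\ref{RemA>a} then lifts each constructed realization to all larger $a$ by prepending $R\{1^{a-a_0}\}$.

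\textbf{Main obstacle.} The delicate point is the interplay between the induction and the exception list. Whenever the proposed reduction $d\mapsto d-d_0$ would land on a forbidden tuple, an alternative block must be peeled off or a new hand-built realization supplied. This is exactly why the exceptions of~(iv) and~(v) appear as arithmetic progressions of step $5$: these are the residues modulo $5$ for which iterating ``subtract $5$ from $d$'' cycles back into a base case that is itself unrealizable. The bulk of the work is therefore the catalogue assembled in Section~3 of sufficiently many perfect realizations of small lists so that, for every residue of $d$ modulo the chosen period and every small $(b,c)$, at least one decomposition via~$(*)$ is available; verifying that this catalogue suffices is what makes Section~4 lengthy but ultimately elementary.
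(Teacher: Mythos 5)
Your architecture matches the paper's: necessity for the family (iii) via Remark \ref{a+b+c}, and sufficiency by reducing $a$ with Remark \ref{RemA>a} and peeling mod-$5$ blocks off $d$ with the composition $(*)$, bottoming out in Theorem \ref{KDF}, Theorem \ref{lin a=0} and a stock of small perfect realizations (the paper organizes the final proof as a case split on $a=1,\dots,5$, with the $d$-induction living inside Propositions \ref{(1,0,c,d)}--\ref{(1,b,0,d)} and Lemmas \ref{a=2}--\ref{a=3,4,5}, but that is the same idea). However, as a proof there are genuine gaps. First, the sufficiency direction is entirely deferred to an unspecified ``catalogue'': the content of the theorem is precisely the claim that the catalogue of base realizations and $(*)$-decompositions covers every non-excepted tuple and \emph{fails} for exactly the listed ones, and you never exhibit the catalogue nor verify coverage. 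Your heuristic for why (iv)--(v) are arithmetic progressions of step $5$ (``subtracting $5$ cycles back into an unrealizable base case'') only explains why the \emph{given} reduction fails there; it does not show that no other construction works, which is what the theorem asserts.

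Second, your necessity sketch for (i), (ii), (iv), (v) is partly wrong. A linear realization allows lengths up to $v-1$, so ``$5>\lfloor v/2\rfloor$'' is never an obstruction (the paper's own example $[0,2,3,5,4,1,6]$ realizes $\{1^2,2^2,3,5\}$ with $v=7$). More importantly, a linear realization here must start at $0$, and for several small exceptions this, not the non-existence of a suitable Hamiltonian path, is the true obstruction: for $(1,0,3,1)$ the path $[2,5,0,3,4,1]$ has difference multiset $\{1,3^3,5\}$, yet the tuple is correctly listed as an exception because the forced edges $\{0,3\}$ and $\{0,5\}$ give $0$ degree two, so $0$ cannot be an endpoint. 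Your ``endpoint-splicing'' idea is sound for the large-$c$ cases of (i)--(ii) (all $v-3$ length-$3$ edges are forced, their three residue-class subpaths have endpoints in $\{0,1,2\}\cup\{v-3,v-2,v-1\}$, and no length-$5$ edge joins two such endpoints once $v\geq 11$), but the analogous mod-$5$ analysis for (iv)--(v) is genuinely delicate (the pairing of bottom and top endpoints by the five forced subpaths depends on $v\bmod 5$, and the $0$-start condition must be tracked) and you do not carry it out; the paper itself only asserts these non-existence facts as routine or computer checks.
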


Note that any list $L$ with elements in a set $S$ can be identified with the function from $S$
to the set $\mathbb{N}$ of the natural numbers,
sending every $s\in S$ in the number of times that $s$ appears in $L$.
Thus it is natural to denote by
 $\mathbb{N}^S$ the set of all possible lists whose
elements are in $S$ and by $\mathcal{R}(\mathbb{N}^S)$ the set of all linearly realizable lists of $\mathbb{N}^S$.
In the following let $S=\{1,2,3\}$ and note that, in this case,
$\mathcal{R}(\mathbb{N}^S)$ is described in Theorem \ref{KDF}.\\
Now we are ready to present how we obtained our results.

\medskip
\noindent \textbf{Step 1.} In view of Remark \ref{RemA>a} we start investigating the
linear realizations of $L=\{2^b,3^c,5^d\}$.
Clearly, by Remark \ref{a+b+c} this means $b+c\geq4$. In addition, here we suppose
$b\geq3$.
First, we construct some perfect linear realizations of $L_1=\{2^b,3^c,5^d\}$ with $b,c$
as small as possible, see Table \ref{TablePerfect}.
Then, we apply the composition $(*)$ where $RL_1$ is one of these new perfect linear
realizations and
$L_2$ is in $\mathcal{R}(\mathbb{N}^S)$.
In such a way we are left to consider lists of the two following types:
\begin{itemize}
\item[(i)] $L=L_1\cup L'_2$ with $L'_2\in S^\mathbb{N}\setminus\mathcal{R}(\mathbb{N}^S)$;
\item[(ii)] $L=L'_1\cup L_2$ with  $L_2\in\mathcal{R}(\mathbb{N}^S)$, but $L'_1$ not containing any $L_1$ admitting a perfect linear realization.
\end{itemize}

\medskip
\noindent\textbf{Step 2.} When possible, we find a linear realization for the lists
$L=\{2^b,3^c,5^d\}$
not covered by Step 1. For this reason we construct some linear realizations of $\bar L=\{2^b,3^c,5^d\}$
with $b,c$ as small as possible, see Table \ref{TableLinear}. Now we apply the
composition $(*)$,
where $RL_1$ is a perfect linear realization of Table \ref{TablePerfect} (in few cases we use a perfect linear realization constructed in \cite{CDF})
and $r \bar L$ is a linear realization of Table \ref{TableLinear}.
To conclude we divide the remaining realizations into 129 infinite families 
in which exactly one among $b,c,d$ is not a given integer, but depends on a parameter 
and it is written 
according to
the congruence class of $b$ modulo 2, or of $c$ modulo $3$,
or of $d$ modulo $5$, respectively 
(for example $L=(2^{2v+3},3^2,5^1)$, $L=(2^{4},3^{3t+1},5^1)$ or $L=(2^{5},3^{0},5^{5k+1})$). 
Then, for each of these families, we directly construct a linear
realization, see \cite{PP}.

\medskip
\noindent \textbf{Step 3.} Now we consider the case $a=1$. In view of Remark \ref{RemA>a}
we have  to
find a linear realization of the list $L=\{1,2^b,3^c,5^d\}$ only if we have not found a linear realization
of the list $L'=\{2^b,3^c,5^d\}$ in Step 1 or Step 2 (see Theorem \ref{lin a=0}).
Also here we proceed by using the composition $(*)$, and, when this is not possible, by a
direct construction, see \cite{PP}.

\medskip \noindent \textbf{Step 4.} For $2\leq a\leq 5$, again in view of Remark
\ref{RemA>a},
we have to construct a linear realization of the list $L=\{1^a,2^b,3^c,5^d\}$ only if we have not found a linear realization
of the list $L'=\{1^{a-1},2^b,3^c,5^d\}$.
As done before, we proceed by using the composition $(*)$ and, when this is not possible,
by a direct construction, see \cite{PP}.
Finally, for $a\geq 6$ the result follows by Remark \ref{RemA>a} and the previous
constructions.

\medskip \noindent \textbf{Step 5.} As final step, we construct a cyclic realization for
the lists which do not admit a linear realization, namely when $a+b+c<4$ (see Remark \ref{a+b+c}),
and for the lists for which
we have not found a linear realization, namely when $a=0$ and $b=1,2$ (see Theorem
\ref{MainResult}).
We subdivide these lists into $86$ infinite families according to congruence class of $c$
modulo $3$ or of $d$ modulo $5$
and in Section 4 we give a direct construction of a cyclic realization for each of
these families.\\

The choice of $S=\{1,2,3,5\}$ instead of the more natural one of $\{1,2,3,4\}$ is due to
the
strategy illustrated above, since a perfect linear realization of $L=\{1^a,2^b,3^c,5^d\}$
exists for many small values
of $a,b,c$. So this means that, for instance, in Step 1 the number of lists of type (ii)
is definitively smaller if we consider $\{1,2,3,5\}$
instead of $\{1,2,3,4\}$. Also in the other steps, with our choice it is possible to
obtain more realizations thanks to the composition construction, and this reduces significantly
the direct constructions one has to provide.

%
In order to give an idea to the reader of the reason
for which
it is important the parity of the largest element of $L$ consider, for instance, the list
$L=\{1^{m-1},m^{km}\}$ where $m$ is an odd integer and $k$ is an arbitrary
positive integer.
It is not hard to see that the following is a perfect linear realization of $L$:
$RL=[0,m,2m,\ldots,km,km+1,(k-1)m+1,\ldots, 1, 2,m+2,2m+2\ldots, km+2,km+3,(k-1)m+3,\ldots,3,4,m+4,2m+4,\ldots,km+4,\ldots, m-1,2m-1,\ldots,
km+m-1 ]$.
\\
Suppose now $m$ even, for instance take $m=4$. It is easy to see that a perfect linear
realization of
$L'=\{1^3,4^{4k}\}$ cannot exist. In fact the unique linear realization of $L'$ is
$rL'=[0,4,8,\ldots,4k,4k+1, 4(k-1)+1,\ldots, 1, 2,6, 10,\ldots, 4k+2,4k+3, 4(k-1)+3,\ldots,
7,3]$,
which clearly is not perfect.

\section{Construction of linear realizations}
In this section we construct linear realizations for the lists $L=\{1^a,2^b,3^c,5^d\}$.
As already observed, we can assume $d>0$ and at least one of $a,b,c$ grater than $0$.

%

In order to deal with the general case, we need some \emph{perfect} linear
realizations that we
list in Table \ref{TablePerfect}. We also need the linear realizations of
Table \ref{TableLinear}. For certain values of $(a,b,c,d)$ it is a simple exercise to obtain a
linear realization of $L$. For instance, we have found
a perfect linear realization of the lists $\{3^4,5^4\}$,  $\{2^2,3^2,5^3\}$,
$\{2^2,3^3,5\}$ and $\{2^6,5\}$ that will be often used in the following
constructions.
Also, in some cases  we directly construct a linear realization of a given list $L$.
For sake of brevity  we have collected all these constructions
in \cite{PP}.

In order to represent our realizations in a short way we will always use the notation here explained.
In every realization of a list $L=\{1^{a_1},2^{a_2},3^{a_3},5^{a_5}\}$ in which exactly
one $a_i$ is not a given integer but depends on a parameter, the dots always appear between two elements
$x$ and $y$ which are congruent modulo $i$. It is then understood that the sequence of elements
between $x$ and $y$ is the arithmetic progression $x+i,x+2i,\ldots,y-i$ or the arithmetic progression
$x-i,x-2i,\ldots,y+i$ according to whether $y>x$ or $y<x$, respectively.

\begin{center}
\begin{table}[!th]
\begin{footnotesize}
\begin{tabular}{ll}
$(a,b,c,d)$ & $R\{1^a,2^b,3^c,5^d\}$\\\hline
$(0,0,6,5k+2)$ & $[0,3,\ldots,5k+3,5k+6,\ldots,1 , 4,\ldots, 5k+4
, 5k+7,\ldots, 2,5,\ldots,5k+5, 5k+8]$\\

$(0,0,6,5k+3)$ & $[0,3,\ldots,5k+8,5k+5,\ldots,5 , 2 ,\ldots, 5k+7, 5k+4 ,\ldots,
4,1,\ldots, 5k+6, 5k+9]$\\

$(0,4,0,5k+3)$ & $[0,\ldots, 5k+5,5k+3 ,\ldots,3,1,\ldots,
5k+6,5k+4 ,\ldots, 4 ,2,\ldots,5k+7]$\\

$(0,4,0,5k+4)$ &
$[0,\ldots,5k+5,5k+7,\ldots,2,4,\ldots,5k+4,5k+6,\ldots,1,3,\ldots,5k+8]$\\

$(0,6,0,5k+1)$&
$[0,2,4,\ldots,5k+4,5k+6,\ldots,1,3,\ldots,5k+3,5k+5,\ldots,5,7,\ldots,5k+7]$\\

 $(0,6,0,5k+5)$ &
$[0,\ldots,5k+10,5k+8,\ldots,3,1,\ldots,5k+6,5k+4,\ldots,4,2,\ldots,5k+7,5k+9,$ \\
&$5k+11]$
\\

$(0,6,1,5k+1)$ &
$[0,2,4,\ldots,5k+4,5k+7,\ldots,7,5,\ldots,5k+5,5k+3,\ldots,3,
  1,\ldots,5k+6,5k+8]$\\

$(0,8,0,5k+2)$ & $[0,2, 4,\ldots,5k+9, 5k+7,\ldots,7,
5,\ldots,5k+5,5k+3,\ldots,3,1,\ldots,5k+6,5k+8,$\\
&$  5k+10] $\\

$(1,0,4,5k+2)$ & $[0, 3 ,\ldots, 5k+3, 5k+6 ,\ldots,1,4 ,\ldots,5k+4 , 5k+5
,\ldots,5 ,2 ,\ldots,5k+7]$\\
\end{tabular}
\end{footnotesize}

\caption{Some perfect linear realizations of $\{1^a,2^b,3^c,5^d\}$.} \label{TablePerfect}
\medskip

\begin{footnotesize}
\begin{tabular}{ll}
$(a,b,c,d)$ & $r\{1^a,2^b,3^c,5^d\}$\\\hline
$(0,1,4,5k+3)$ & $[
0 , 3,\ldots, 5k+8,5k+6,\ldots, 1,4,\ldots, 5k+4,5k+7,\ldots, 2,5,\ldots, 5k+5]$\\
$(0,1,5,5k+5)$ &
$[0,3,\ldots,5k+8,5k+11,\ldots,1,4,\ldots,5k+9,5k+7,5k+10,\ldots,5,$\\
&$2,\ldots,5k+2]$\\

 $(0,2,3,5k+2)$ &
 $[0,2,\ldots,5k+7,5k+4,\ldots,4,1,\ldots,5k+6,5k+3,\ldots,3,5,\ldots,5k+5]$\\

 $(0,4,0,5k+2)$ &
 $[0,\ldots,5k+5,5k+3,\ldots,3,1,\ldots,5k+6,5k+4,\ldots,4, 2,\ldots,5k+2]$\\

$(0,5,0,5k+5)$ &
$[0,\ldots,5k+10,5k+8,\ldots,3,1,\ldots,5k+6,5k+4,\ldots,4,2,\ldots,5k+7,5k+9]$\\
 $(1,0,4,5k+3)$ &
$[0,3,\ldots,5k+8,5k+5,\ldots,5,2,\ldots,5k+7,5k+6,\ldots,1,4,\ldots,5k+4]$\\
\end{tabular}
\end{footnotesize}

\caption{Some linear realizations of $\{1^a,2^b,3^c,5^d\}$.}
\label{TableLinear}
\end{table}
\end{center}

\subsection{Case $a=0$}

As explained in Section 2, it is natural to start investigating the case $a=0$, that
is we consider the multiset
$L=\{2^b,3^c,5^d\}$.
\begin{lem}\label{bc}
We have the following linear realizations of $L=\{2^b,3^c,5^d\}$:
\begin{enumerate}
\item[\rm{(i)}] $r\{2^b,3,5^2\}$ exists if, and only if, $b\geq 3$;
\item[\rm{(ii)}] $r\{2^b,3^2,5\}$ exists if, and only if,  $b\geq 2$;
\item[\rm{(iii)}] $r\{2^3,3^c,5\}$ exists if, and only if,  $c\geq 1$;
\item[\rm{(iv)}] $r\{2^3,3^c,5^2\}$ exists if, and only if,  $c\geq 1$;
\item[\rm{(v)}] $r\{2^4,3^c,5\}$ exists if, and only if,  $c\geq 1$;
\item[\rm{(vi)}] $r\{2^4,3^c,5^2\}$ exists for all $c\geq 0$.
\end{enumerate}
\end{lem}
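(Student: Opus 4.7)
All six parts share a common structure: the necessity reduces to a short count inside $K_v$ with $v=|L|+1$, while the sufficiency requires an explicit parametric family of Hamiltonian paths starting at $0$.

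For necessity I would invoke two elementary facts about $K_v$: the largest possible edge-length is $v-1$, and the number of edges of length exactly $5$ in $K_v$ equals $v-5$. In (i) we have $|L|=b+3$, so $v=b+4$; the bound $5\leq v-1$ forces $b\geq 2$, and at $b=2$ the graph $K_6$ has only one length-$5$ edge, namely $\{0,5\}$, contradicting $5^2\in L$, so $b\geq 3$. In (ii) and (iii) the bound $5\leq v-1$ alone yields the claim, and in (iv) the multiplicity bound $v-5\geq 2$ gives $c\geq 1$. The remaining subcase, (v) with $c=0$, requires a slightly finer argument: here $L=\{2^4,5\}$ on $v=6$ passes both tests, but the four length-$2$ edges of $K_6$ are exactly $\{0,2\},\{1,3\},\{2,4\},\{3,5\}$ and together with $\{0,5\}$ they form (as a graph of degree sequence $1,1,2,2,2,2$) the unique Hamiltonian path $4,2,0,5,3,1$ up to reversal; since neither endpoint equals $0$, our convention $x_0=0$ rules it out.

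For sufficiency I would, in each of the six cases, exhibit an explicit parametric construction of a Hamiltonian path starting at $0$. The natural pattern is to treat the even vertices $\{0,2,4,\ldots\}$ and the odd vertices $\{1,3,5,\ldots\}$ as two separate blocks traversed by length-$2$ steps, and to use the one or two length-$5$ jumps (together with the length-$3$ edges, which also switch parity) as the connectors between the two blocks. In (i) and (ii) the parameter is $b$, and splitting on the parity of $b$ should produce two recipes, each verified directly for the smallest admissible value of $b$ and then extended by inserting an extra length-$2$ step into one of the blocks. In (iii)--(vi) the parameter is $c$ with $b$ fixed, and splitting on $c\bmod 3$ again gives a small number of recipes; for (vi), which imposes no lower bound, the base case $c=0$ may be taken to be $[0,5,3,1,6,4,2]$, and the others are obtained by analogous zig-zags.

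The only real obstacle is the bookkeeping. Each parametric path must simultaneously start at $0$, visit every vertex of $\{0,1,\ldots,v-1\}$ exactly once, and use exactly the prescribed multiplicities of $2$, $3$ and $5$; moreover, the length-$5$ and length-$3$ connectors must be positioned so that the even and odd blocks fit together without revisiting a vertex. Once the minimal instance in each congruence class has been produced by hand, the extension to larger parameter is routine and the verification mechanical, which is why the full list of explicit realizations is deferred to the accompanying reference \cite{PP}.
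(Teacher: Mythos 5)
The paper's own proof of this lemma is essentially a pointer: the negative cases and seven small positive cases are checked by computer, and every remaining positive case is an explicit construction relegated to the external list \cite{PP}. Your necessity argument replaces the computer check with a correct hand count, and that is a genuine (if modest) improvement: the two bounds ``largest difference $\le v-1$'' and ``at most $v-5$ edges of difference $5$'' do dispose of every negative case except $\{2^4,5\}$ in (v), and there you correctly isolate the one subtle point of the whole lemma --- the five admissible edges of $K_6$ form the single path $[4,2,0,5,3,1]$, which does realize the list but cannot be made to start at $0$ even after reversal or the reflection $x\mapsto 5-x$, so it is excluded only by the paper's convention that $x_0=0$. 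That convention really is load-bearing here, and your reading of it is the only one under which the ``only if'' in (v) is true. On the sufficiency side, however, your proposal is a plan rather than a proof: apart from the (correct) base case $[0,5,3,1,6,4,2]$ for $\{2^4,5^2\}$, no parametric family is actually written down, and the claim that extending each base case by inserting extra length-$2$ steps is ``routine'' is plausible but unverified. You end up deferring to \cite{PP} exactly as the paper does, so you are at parity with the printed proof; note only that where the paper does exhibit such families (Tables 1 and 2 and Section 4), the extension mechanism is the concatenation $(*)$ of a perfect realization with a smaller one, organized by residue classes of the running parameter, rather than an in-place insertion, and making the sufficiency self-contained would require writing out one explicit family per residue class.
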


\begin{proof}
Note that, we the aid of a computer, we can obtain the negative cases as
well as the
positive cases $(b,c,d)\in \{(2,2,1), (3,1,1), (3,1,2), (3,2,1),
(3,3,1), (3,4,2), (4,0,2)\}$.
For the other values of $(b,c,d)$, see \cite{PP}.
\end{proof}

\begin{lem}\label{(0,b,c,1)}
Let $b\geq 3$. The list $L=\{2^b,3^c,5 \}$  has a linear
realization if, and only if, either $(b,c)\in \{(3,1), (4,1), (5,0), (5,1), (6,0),
(6,1), (9,1),(10,1)\}$ or $c\geq 2$.
\end{lem}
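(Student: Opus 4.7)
I would prove the two directions separately, with the common theme that edges of length $2$ preserve vertex parity while edges of lengths $3$ and $5$ reverse it.

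For \emph{sufficiency} I would split into three subcases. For the eight sporadic pairs listed, one exhibits explicit realizations; several are already covered by Lemma \ref{bc}, and the rest are small enough to check by hand or by computer (e.g.\ $[0,2,4,6,1,3,5]$ realizes $\{2^5,5\}$). For $c=2$, Lemma \ref{bc}(ii) directly gives $r\{2^b,3^2,5\}$ for every $b\geq 3$. For $c\geq 3$, I would apply the composition construction $(*)$: choose a perfect base realization from Table \ref{TablePerfect} (for $d=1$ both $R\{2^6,5\}$ and $R\{2^6,3,5\}$ are available) and append a linear realization of the residue list $\{2^{b-b_0},3^{c-c_0}\}$ supplied by Theorem \ref{KDF}. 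Any residual pairs form a finite set, each handled by a direct construction recorded in the companion paper \cite{PP}.

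For \emph{necessity} with $c=0$, the list $\{2^b,5\}$ has exactly one parity-reversing edge, so a Hamiltonian realization $[0,x_1,\dots,x_{b+1}]$ breaks into a $\pm 2$-Hamiltonian path on the even vertices of $\{0,1,\dots,b+1\}$, the single $\pm 5$ edge, and a $\pm 2$-Hamiltonian path on the odd vertices. A $\pm 2$-Hamiltonian path on a block of consecutive integers of one parity is forced to be the monotone run, so the even arc is $0,2,\dots,E$ (with $E$ the largest even in $\{0,\dots,b+1\}$) and the odd arc must begin at $1$ or at the largest odd. The equation $E\pm 5\in\{1,\text{largest odd}\}$, resolved separately for $b$ even and $b$ odd, yields exactly $b\in\{5,6\}$.

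For \emph{necessity} with $c=1$, the list $\{2^b,3,5\}$ has two parity-reversing edges (of lengths $3$ and $5$), so the path decomposes into three monochromatic arcs of type E--O--E, each again a monotone $\pm 2$-run on consecutive integers of one parity. The first arc is $0,2,\dots,2(a_1-1)$ for some $a_1\geq 1$; the middle arc is the full set of odd vertices in one of its two possible orientations; and the third arc consists of the remaining evens, also with two possible orientations. The two jumps of sizes $3$ and $5$ must connect prescribed endpoints of consecutive arcs, giving a small set of linear Diophantine constraints on $a_1$ and $b$. Enumerating the orientations of each arc and the assignment of length $3$ or $5$ to each jump yields $b\in\{3,4,5,6,9,10\}$ as the only feasible sizes. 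The main obstacle is bookkeeping: the $c=1$ analysis has noticeably more subcases than the $c=0$ one, because one must also track which parity-reversing edge effects which transition, but once the cases are enumerated the resulting Diophantine conditions are immediate.
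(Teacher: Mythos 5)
Your necessity arguments are sound and in fact supply the details that the paper dismisses with ``by a direct calculation'': the parity decomposition into monotone $\pm2$-arcs (using the convention that the path starts at $0$) does force $b\in\{5,6\}$ when $c=0$ and $b\in\{3,4,5,6,9,10\}$ when $c=1$, and your $c=2$ case correctly reduces to Lemma \ref{bc}(ii).

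The gap is in the sufficiency argument for $c\geq 3$. The only perfect realizations in Table \ref{TablePerfect} with $d=1$ are $R\{2^6,5\}$ and $R\{2^6,3,5\}$, so your composition leaves a residue of the form $\{2^{b-6},3^{c}\}$ or $\{2^{b-6},3^{c-1}\}$, and Theorem \ref{KDF} realizes such a list (with $a=0$) essentially only when $b-6\geq 4$ and the exponent of $3$ is at least $3$, plus a handful of sporadic pairs. Consequently the uncovered set is \emph{infinite}, not finite: for instance every list $\{2^7,3^c,5\}$ with $c\geq 2$ escapes your scheme, since the residues $\{2,3^c\}$ and $\{2,3^{c-1}\}$ are never linearly realizable; the same happens for $b\in\{3,4,5,6,8\}$ and all large $c$, and for $c\in\{3,4,5\}$ with all large $b$. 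The claim ``any residual pairs form a finite set'' therefore fails. The repair is to also run the composition in the opposite order, i.e.\ to take the \emph{perfect} summand from $\mathbb{N}^{\{2,3\}}$ and put the $5$ in the second (not necessarily perfect) summand: this is what the paper does, writing $r\{2^5,3^c,5\}=R\{2^2,3^3\}+r\{2^3,3^{c-3},5\}$ and, for $b\geq 6$, $r\{2^b,3^c,5\}=R\{2^4,3\}+r\{2^{b-4},3^2,5\}$ (and its analogues for $c=4,5$), falling back on $R\{2^2,3^3,5\}+r\{2^{b-2},3^{c-3}\}$ only when $c\geq 6$ so that Theorem \ref{KDF}(i) applies. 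This requires Lemma \ref{bc}(ii),(iii),(v) as a source of $5$-containing \emph{residues} for all $c$, not merely as a source of sporadic base cases, which is the ingredient missing from your outline.
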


\begin{proof}
If $c=0$, then we can use only the integer $5$ to switch from one congruence class modulo
$2$
to the other one, so, by a direct calculation, one can check that $\{2^b,5\}$
is linearly realizable if, and only if, $b=5,6$. Similarly, it is not hard to see that $\{2^b,3,5\}$ has a linear
realization if, and only if, $b=3,4,5,6,9,10$. Hence, we may assume $c\geq 2$.
Furthermore, by Lemma \ref{bc} (iii) and (v) we may also
assume $b\geq 5$.

Suppose $b=5$.
The cases $c=2,3$ are a simple exercise and for $c\geq 4$ we have
$r\{2^5,3^c,5\}=R\{2^2,3^3\}+r\{2^3,3^{(c-3)},5\}$.
Hence the existence follows from Theorem \ref{KDF} (iii) and Lemma \ref{bc} (iii).

Now, suppose $b\geq 6$. By Lemma \ref{bc} (ii) we may assume $c\geq 3$.
For $c=3,4,5$ we have $r\{2^b,3^3,5\}  =
R\{2^4,3\}+r\{2^{(b-4)},3^2,5\}$, $r\{2^b,3^4,5\}  =
R\{2^4,3^2\}+r\{2^{(b-4)},3^2,5\}$ and $r\{2^b,3^5,5\}  =
R\{2^2,3^3\}+r\{2^{(b-2)},3^2,5\}$.
Hence the existence follows from Theorem \ref{KDF} (iii) and Lemma \ref{bc} (ii).
For $c\geq 6$, we have
$r\{2^b,3^c,5\}=R\{2^2,3^3,5\}+r\{2^{(b-2)},3^{(c-3)}\},$
so the existence follows from  Theorem \ref{KDF} (i).
\end{proof}

\begin{lem} \label{(0,b,c,2)}
Let $b \geq 3$. The list $L=\{2^b,3^c,5^2 \}$  has a linear realization if, and
only if, either $c=0$ and $b\in\{4,5,7,8,11,12\}$ or $c\geq 1$.
\end{lem}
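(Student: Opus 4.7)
My plan mirrors the approach of the preceding Lemma~\ref{(0,b,c,1)}. The argument splits into two rather different regimes: the case $c=0$, where having only two edges of length $5$ forces a sporadic classification through a tight parity analysis, and the main case $c\geq 1$, in which I can repeatedly invoke Lemma~\ref{bc} together with the composition $(*)$.

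For $c=0$ I reason as follows. In any linear realization of $\{2^b,5^2\}$, the $2$-edges preserve parity and the two $5$-edges flip it, so the Hamiltonian path on $\{0,\ldots,b+2\}$ decomposes into three consecutive arcs of monochromatic parity: an initial even arc starting at $0$, a middle odd arc, and a terminal even arc. Each arc is an arithmetic progression of step $2$ sitting inside the even or odd part of $\{0,\ldots,b+2\}$, and the two $5$-jumps must connect endpoints differing by exactly $5$. Counting cardinalities and enumerating the finitely many ways to split the even set into two intervals leaves a small combinatorial system whose solutions turn out to be exactly $b\in\{4,5,7,8,11,12\}$; for each of these six values I exhibit an explicit path (for instance $[0,5,3,1,6,4,2]$ when $b=4$), and for every other $b\geq 3$ the system is infeasible. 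These non-existence claims can be ruled out by hand or, as the authors indicate, checked by computer.

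For $c\geq 1$, Lemma~\ref{bc} already handles three boundary slices: part (i) disposes of $c=1$ with $b\geq 3$, part (iv) of $b=3$ with $c\geq 1$, and part (vi) of $b=4$ with $c\geq 0$. Only $b\geq 5$ and $c\geq 2$ remain. For $b=5$, I treat $c=2,3$ by a short direct check and for $c\geq 4$ set
\[
rL=R\{2^2,3^3\}+r\{2^3,3^{c-3},5^2\},
\]
the first summand being the perfect realization from Theorem~\ref{KDF}(iii) and the second provided by Lemma~\ref{bc}(iv). For $b\geq 6$ and $c\in\{2,3,4,5\}$, I imitate the structure of Lemma~\ref{(0,b,c,1)}, using small compositions of the shape $R\{2^\alpha,3^\beta\}+r\{2^{b-\alpha},3^{c-\beta},5^2\}$ with $(\alpha,\beta)\in\{(4,1),(4,2),(2,3)\}$ (applying the appropriate part of Lemma~\ref{bc} to the second factor) together with a handful of sporadic direct constructions at $b=6$. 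Finally, for $b\geq 6$ and $c\geq 6$, I exploit the perfect realization $R\{2^2,3^3,5\}$ explicitly singled out before Table~\ref{TablePerfect} and write
\[
rL=R\{2^2,3^3,5\}+r\{2^{b-2},3^{c-3},5\},
\]
the second factor being realizable by Lemma~\ref{(0,b,c,1)} since $b-2\geq 4$ and $c-3\geq 3$ comfortably satisfy its hypotheses.

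The main obstacle is the $c=0$ analysis. The positive part (the six sporadic realizations) is straightforward, but giving a clean combinatorial proof of non-existence for every $b\notin\{4,5,7,8,11,12\}$ without appealing to a finite computer search is delicate, because the exceptional set exhibits no obvious arithmetic pattern: the argument must enumerate the possible placements of the two $5$-jumps with some care. By contrast, the $c\geq 1$ treatment is essentially bookkeeping once the perfect building blocks from \cite{CDF} and Lemma~\ref{bc} are in hand.
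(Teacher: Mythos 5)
Your proposal follows essentially the same route as the paper: the $c=0$ classification is settled by a finite direct check (the paper simply states it as a computation, without your parity analysis), $c=1$ and the slices $b=3,4$ come from Lemma~\ref{bc}, and the rest is assembled with the composition $(*)$ from small perfect blocks, falling back on Lemma~\ref{(0,b,c,1)} for the unbounded range (the paper strips off $R\{2^6,5\}$ for $b\geq 9$ where you strip off $R\{2^2,3^3,5\}$ for $c\geq 6$ --- an immaterial difference). One small caveat: for $b\geq 6$ and $c\in\{3,5\}$ some of the second factors your compositions produce, such as $\{2^{b-2},3^2,5^2\}$, are not covered by Lemma~\ref{bc} but only by earlier cases of the very lemma being proved, so the induction on $(b,c)$ should be made explicit; this is a presentational point rather than a gap.
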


\begin{proof}
For $c=0$, by a direct calculation, one can check that $\{2^b,5^2\}$ is linearly realizable if, and only if,
$b\in\{4,5,7,8,11,12\}$.
For $c=1$ the assertion follows from Lemma \ref{bc} (i).
Hence, we may assume $c\geq 2$ and, by Lemma \ref{bc} (iv) and
(vi), we may also assume $b\geq 5$.

Suppose $b=5,\ldots,8$. It is easy to construct a linear realization if $c=2,3$ and for $c\geq 4$, we have
$r\{2^b,3^c,5^2\}=R\{2^2,3^3\}+r\{2^{(b-2)},3^{(c-3)},5^2\}$,
so the result follows from Theorem \ref{KDF} (iii) and Lemma \ref{bc} (iv) and (vi).

Now, assume $b\geq 9$. We have $r\{2^b,3^c,5^2\}=R\{2^6,5\}+r\{2^{(b-6)},3^c,5\}$,
so the existence follows from Lemma \ref{(0,b,c,1)}.
\end{proof}

\begin{lem}\label{(0,b,0,d)}
The list $L=\{2^b,5^d\}$ is linearly realizable in each of the following cases:
\begin{enumerate}
\item[\rm{(i)}] $d=5k+3$ and $b\geq 7$;
\item[\rm{(ii)}] $d=5k+4$ and $b\geq 10$;
\item[\rm{(iii)}] $d=5k+5$ and $b\geq 8$;
\item[\rm{(iv)}] $d=5k+6$ and $b\geq 5$;
\item[\rm{(v)}] $d=5k+7$ and  $b\geq 7$;
\end{enumerate}
for all $k\geq 0$.
\end{lem}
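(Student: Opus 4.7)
The plan is to prove the lemma case by case, in each case applying the composition formula $(*)$: pick a perfect realization $RL_1$ of some $L_1 = \{2^{b_1}, 5^{d_1}\} \subseteq L$ from Table \ref{TablePerfect}, and combine it with a (not necessarily perfect) realization $rL_2$ of the complementary list $L_2 = \{2^{b-b_1}, 5^{d-d_1}\}$ drawn from one of the preceding lemmas. Since $L$ contains only the lengths $2$ and $5$, both $L_1$ and $L_2$ must also; thus $L_1$ must be one of $\{2^4,5^{5k+3}\}$, $\{2^4,5^{5k+4}\}$, $\{2^6,5^{5k+1}\}$, $\{2^6,5^{5k+5}\}$, $\{2^8,5^{5k+2}\}$, and $L_2$ must be realizable with $c=0$, i.e.\ it must come from Lemma \ref{(0,b,c,1)} (case $d=1$) or Lemma \ref{(0,b,c,2)} (case $d=2$), possibly supplemented by Table \ref{TableLinear} or by a direct construction in \cite{PP}. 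The choice of $RL_1$ is forced by the residue $d \bmod 5$ that we need to match.

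Concretely, I would propose the following decompositions. For case (i) ($d=5k+3$), take $L_1=\{2^6,5^{5k+1}\}$ so that $L_2=\{2^{b-6},5^2\}$, applicable when $b-6\in\{4,5,7,8,11,12\}$ by Lemma \ref{(0,b,c,2)}; alternatively take $L_1=\{2^8,5^{5k+2}\}$ with $L_2=\{2^{b-8},5\}$ for $b-8\in\{5,6\}$ by Lemma \ref{(0,b,c,1)}. For case (ii) ($d=5k+4$), use $R\{2^4,5^{5k+3}\}+r\{2^{b-4},5\}$ or $R\{2^8,5^{5k+2}\}+r\{2^{b-8},5^2\}$. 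For cases (iii), (iv), (v) the same machine runs with the analogous choices: in (iii) we can use $R\{2^4,5^{5k+3}\}+r\{2^{b-4},5^2\}$ or $R\{2^4,5^{5k+4}\}+r\{2^{b-4},5\}$; in (iv) we use $R\{2^4,5^{5k+4}\}+r\{2^{b-4},5^2\}$ and $R\{2^6,5^{5k+5}\}+r\{2^{b-6},5\}$; in (v) we use $R\{2^6,5^{5k+5}\}+r\{2^{b-6},5^2\}$ and $R\{2^6,5^{5(k+1)+1}\}+r\{2^{b-6},5\}$. In each case, the realization $rL_2$ is pulled from the already proved Lemmas \ref{(0,b,c,1)} and \ref{(0,b,c,2)}.

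The main obstacle is that the trivially desired extension $r\{2^m\}$ does \emph{not} exist (length-$2$ edges preserve parity, so no Hamiltonian path on $\{0,\ldots,m\}$ uses only $2$'s), and Lemmas \ref{(0,b,c,1)}, \ref{(0,b,c,2)} restricted to $c=0$ realize $\{2^b,5\}$ and $\{2^b,5^2\}$ only for \emph{finitely many} values of $b$. Consequently, infinitely many values of $b$ in each case are not covered by the composition machinery above. The fix, as in Step~2 of the general strategy outlined in Section~2, is to partition these leftover pairs $(b,d)$ into finitely many one-parameter families (where $d$ ranges over a residue class modulo $5$ while $b$ is fixed, or vice versa), and exhibit a direct linear realization for each such family. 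These explicit constructions — routine pattern-matching but tedious — are the genuine content of the proof and are collected in \cite{PP}; the argument here is then just the assembly, via $(*)$ and the direct constructions, of a realization for every pair $(b,d)$ allowed in (i)--(v).
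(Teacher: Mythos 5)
You have correctly identified the obstacle: since $r\{2^m\}$ never exists, the composition $(*)$ with a first factor from Table \ref{TablePerfect} needs a second factor of the form $r\{2^m,5^{d_2}\}$, and Lemmas \ref{(0,b,c,1)} and \ref{(0,b,c,2)} supply such factors only for finitely many $m$ (namely $m\in\{5,6\}$ when $d_2=1$ and $m\in\{4,5,7,8,11,12\}$ when $d_2=2$). But your proposed repair does not close the gap. After the compositions you list, the uncovered pairs $(b,d)$ in each case form a set that is infinite in \emph{both} parameters (all sufficiently large $b$, for every $d$ in the given residue class), and such a two-parameter set cannot be partitioned into finitely many one-parameter families of the kind you describe (fixed $b$ with $d$ running over a residue class, or vice versa). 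So ``defer the rest to direct constructions in \cite{PP}'' is not an assembly of a proof from known pieces; it silently assumes an infinite collection of constructions.

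The missing idea, which is exactly how the paper's proof begins, is to first establish a second factor whose $b$-parameter is \emph{unbounded}: one shows (by computer for $b\le 4$ and by explicit constructions in \cite{PP} for $b\ge 5$) that $\{2^m,5^3\}$ is linearly realizable for every $m\ge 4$. With this single one-parameter family in hand, the decomposition
$$r\{2^{b},5^{d}\}=R\{2^{b_1},5^{\,d-3}\}+r\{2^{\,b-b_1},5^{3}\},$$
with $(b_1,d-3)$ matching one of the Table \ref{TablePerfect} entries $(0,4,0,5k+3)$, $(0,4,0,5k+4)$, $(0,6,0,5k+1)$, $(0,6,0,5k+5)$, $(0,8,0,5k+2)$, covers all $b$ beyond an explicit threshold uniformly in $d$ (e.g.\ for $d=5k+6$ one takes $R\{2^4,5^{5k+3}\}+r\{2^{b-4},5^3\}$ for all $b\ge 8$). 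What remains is then genuinely only a finite list of one-parameter families in $d$ with small fixed $b$ (such as $\{2^7,5^{5k+3}\}$ or $\{2^{11},5^5\}$), which the paper handles by further compositions using Table \ref{TableLinear} or by direct construction. Without the $\{2^m,5^3\}$ family (or some substitute with unbounded $b$), your argument cannot reach large $b$ at all.
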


\begin{proof}
First of all we show that $\{2^b,5^3\}$ is linearly realizable if, and only if, $ b\geq
4$: for $b\leq 4$ we obtain the statement with the aid of a computer and for $b\geq 5$ we refer to \cite{PP}.
Now, we start with the case (iv) since this result will be useful in other cases.

(iv) Let $d=5k+6$. For $b=6$ look at Table \ref{TablePerfect} and for $b=5,7$ look
at \cite{PP}.\\
For $b\geq 8$ we have $r\{2^b,5^{(5k+6)}\}=R\{2^4,5^{(5k+3)}\}+r\{2^{(b-4)},5^3\}$,
so the existence follows from Table \ref{TablePerfect}.

(i) Let $d=5k+3$. For $b=7$ we refer to \cite{PP}. For $b=8,9$ we have
$R\{2^8,5^{(5k+8)}\}=R\{2^4,5^4\}+R\{2^4,5^{(5k+4)}\}$ and
$r\{2^9,5^{(5k+8)}\}=R\{2^4,5^3\}+r\{2^5,5^{(5k+5)}\}$.\\
For $b\geq10$, $r\{2^b,5^{(5k+8)}\}=R\{2^6,5^{(5k+5)}\}+r\{2^{(b-6)},5^3\}$ and
so the existence follows from Tables \ref{TablePerfect} and \ref{TableLinear}.

(ii) Let $d=5k+4$ and $b\geq10$. Since $r\{2^b,5^{(5k+4)}\}=R\{2^6,5^{(5k+1)}\}+r\{2^{(b-6)},5^3\}$,
the existence follows from Table \ref{TablePerfect}.

(iii) Let $d=5k+5$. For $b=8,\ldots,11$ we have
 $r\{2^8,5^{(5k+5)}\}=R\{2^4,5^3\}+r\{2^4,5^{(5k+2)}\}$,
 $r\{2^9,5^{(5k+5)}\}=R\{2^4,5^4\}+r\{2^5,5^{(5k+1)}\}$,
 $R\{2^{10},5^{(5k+5)}\}=R\{2^4,$ $5^4\}+R\{2^6,5^{(5k+1)}\}$ and
$r\{2^{11},5^{(5k+10)}\}=R\{2^4,5^4\}+r\{2^7,5^{(5k+6)}\}$.
Also, we have directly checked the existence of a linear realization of $\{2^{11},5^5\}$.\\
Finally, for $b\geq12$ we have
$r\{2^b,5^{(5k+5)}\}=R\{2^8,5^{(5k+2)}\}+r\{2^{(b-8)},5^3\}$.
So the existence follows from Tables \ref{TablePerfect} and \ref{TableLinear} and case (iv).

(v) Let $d=5k+7$. A linear realization for $b=7$ is in \cite{PP}.
For $b\geq 8$, since $r\{2^b,5^{(5k+7)}\}=R\{2^4,5^{(5k+4)}\}+r\{2^{(b-4)},5^3\}$,
the result follows from Table \ref{TablePerfect}.
\end{proof}

\begin{prop}\label{(0,1,c,d)}
Assume $d\geq 7$ and $L\neq \{2,3^4,5^{(5k+10)}\}$. The list
$L=\{2,3^c,5^d\}$ is linearly realizable if, and only if, $c\geq 4$.
\end{prop}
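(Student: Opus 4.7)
The plan is to split the proof into the necessity direction ($c\leq 3$ admits no linear realization) and the sufficiency direction ($c\geq 4$, together with $L\neq\{2,3^4,5^{5k+10}\}$, yields one), exploiting the residue structure modulo $5$ throughout.

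For necessity, I would work modulo $5$. In any linear realization of $L=\{2,3^c,5^d\}$, length-$5$ edges preserve residue classes mod $5$, while length-$2$ and length-$3$ edges shift the residue by $\pm 2\pmod 5$. Deleting the $c+1$ non-length-$5$ edges from the Hamiltonian path yields $c+2$ ``runs'', each contained in a single residue class. Since $d\geq 7$ forces $v\geq 9$, every residue class mod $5$ is non-empty, so at least five runs are required; this immediately kills $c\leq 2$. For $c=3$ the path has exactly five runs, one per residue class, each one covering its class's entire arithmetic progression between the two extreme vertices of that class. Starting at $0$ pins the first run to be the ascent in class $0$ from $0$ to $M_0=5\lfloor(v-1)/5\rfloor$, after which the four transitions must walk a Hamiltonian path of the residue cycle $0\text{-}2\text{-}4\text{-}1\text{-}3\text{-}0$. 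A case analysis on $v\bmod 5$ and the two Hamiltonian paths of this $5$-cycle starting at $0$ shows that the multiset of transition lengths is always one of $\{2^4\},\{2^3,3\},\{2^2,3^2\}$, and never $\{2,3^3\}$, so $c=3$ is impossible.

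For sufficiency, I would use the composition $(*)$ together with the realizations already tabulated. The base cases come from Table~\ref{TableLinear}: $r\{2,3^4,5^{5k+3}\}$ handles $c=4$, $d\equiv 3\pmod 5$, and $r\{2,3^5,5^{5k+5}\}$ handles $c=5$, $d\equiv 0\pmod 5$. The perfect realizations $R\{3^6,5^{5k+2}\}$ and $R\{3^6,5^{5k+3}\}$ from Table~\ref{TablePerfect} allow one to raise $c$ by $6$ and $d$ by a multiple of $5$ by prefixing via $(*)$. I would split on the five residues of $d$ modulo $5$ and, for each, exhibit a composition $RL_1 + rL_2$ that covers every $c\geq 4$ except the family $\{2,3^4,5^{5k+10}\}$, filling the unreachable residues by composing with other perfect realizations of Table~\ref{TablePerfect} or by direct constructions deferred to \cite{PP}. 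The excluded family is unreachable by this method because $c=4$ is too small to peel off a $\{3^6,\ldots\}$ prefix, no Table~\ref{TableLinear} entry has both $c=4$ and $d\equiv 0\pmod 5$, and a purely length-$5$ tail $R\{5^?\}$ does not exist.

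The genuine difficulty is the necessity argument for $c=3$: while the counting rules out $c\leq 2$ in one line, ruling out $c=3$ requires checking, across the five possible values of $v\bmod 5$ and the two Hamiltonian paths of the residue $5$-cycle starting at $0$, that the pattern $\{2,3^3\}$ is genuinely unreachable. The sufficiency, while partitioned over many sub-cases, is essentially bookkeeping once the base realizations in Tables~\ref{TablePerfect} and~\ref{TableLinear} are in hand.
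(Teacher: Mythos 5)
Your proposal is correct and, on the sufficiency side, follows essentially the same route as the paper: base realizations $r\{2,3^4,5^{5k+3}\}$ and $r\{2,3^5,5^{5k+5}\}$ from Table~\ref{TableLinear}, prefixes $R\{3^4,5^4\}$, $R\{3^6,5^{5k+2}\}$, $R\{3^6,5^{5k+3}\}$, etc.\ via the composition $(*)$, a split over the residue of $d$ modulo $5$, and direct constructions deferred to \cite{PP} for the small values $c=4,\dots,8$. Where you genuinely diverge is the necessity direction: the paper disposes of $c\le 2$ by Remark~\ref{a+b+c} and of $c=3$ with the bare assertion that ``it is a routine to check,'' whereas you supply an actual structural argument. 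Your argument is sound, but the decisive point is slightly buried in your phrase ``a case analysis on $v\bmod 5$ and the two Hamiltonian paths'': to make it airtight you should note that, since each of the five runs traverses its whole residue class monotonically and the first run ascends from $0$ to $M_0$, the four transitions necessarily alternate between the top of the range (joining two class \emph{maxima}) and the bottom (joining two class \emph{minima}) --- for $d\ge 7$ the maxima and minima are too far apart for a length-$2$ or length-$3$ edge to jump between them, apart from the marginal pair $M_2=7$, $4$ when $v=12$, which the class ordering never actually uses. Once that is in place the computation is immediate: the second and fourth transitions join the minima $r_1,r_2$ and $r_3,r_4$, and for both Hamiltonian paths $(0,2,4,1,3)$ and $(0,3,1,4,2)$ of the residue $5$-cycle these pairs have $|r_i-r_{i+1}|=2$, so the transition multiset always contains at least two $2$'s and can never be $\{2,3^3\}$; this confirms your list $\{2^4\},\{2^3,3\},\{2^2,3^2\}$ of attainable multisets. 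This buys you a proof of the $c=3$ exclusion that the paper leaves implicit, at the cost of a structural lemma the paper never states. Your closing remark on why $\{2,3^4,5^{5k+10}\}$ escapes the method is fine as motivation but proves nothing about that family; since the proposition explicitly excludes it from its hypotheses, nothing more is required.
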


\begin{proof}
When $c\leq 2$ the list $L$ cannot be linearly realizable by Remark \ref{a+b+c}.
So, assume $c\geq3$. It is a routine to check that the list $\{2,3^3,5^d\}$ has no linear
realization for every $d\geq 6$.
Also, it is not hard to construct linear realizations of
the lists $\{2,3^3,5^4\}, \{2,3^3,5^5\}$ and $\{2,3^5,5^4\}$. Furthermore,
we obtained in \cite{PP} a linear realization of $L$ when $(c,d)\in
\{(3t+6,4),(3t+8,4),(3t+4,5),(3t+6,5)\}$ for all $t\geq0$.
So, when $c\equiv 0,2\pmod 3$, the list $\{2,3^c,5^4\}$ is linearly
realizable if, and only if,  $c\geq 3$ and when $c\equiv 0,1\pmod 3$, the list
$\{2,3^c,5^5\}$ if, and only if, $c\geq 3$.\\
We recall that by hypothesis $d\geq7,$ anyway sometimes our constructions
hold also for $d\leq6$.
 We split the proof into five cases according to the congruence class of $d$
modulo 5.

i) Let $d\equiv2 \pmod 5$. For $c=4,\ldots,7$ we refer to \cite{PP} and for $c=8$ we have
$r\{2,3^8,5^{(5k+7)}\}=R\{3^4,5^4\}+r\{2,3^4,5^{(5k+3)}\}$, hence the existence follows
from Table \ref{TableLinear}.\\
Assume $c\geq 9$. If $c\equiv 1 \pmod 3$, then
$r\{2,3^c,5^{(5k+7)}\}=R\{3^6,5^{(5k+2)}\}+r\{2,3^{(c-6)},5^5\}$. If $c\equiv 0,2\pmod
3$, $r\{2,3^c,5^{(5k+7)}\}=R\{3^6,5^{(5k+3)}\}+r\{2,3^{(c-6)},$ $5^4\}$.
Now, the existence follows from Table \ref{TablePerfect}.

ii) Let $d\equiv4 \pmod5$. For $c=4,\ldots,8$ look at \cite{PP} and for $c=9$ we have
$r\{ 2,3^9,5^{(5k+9)}\}=R\{3^4,5^4\}+r\{2,3^5,5^{(5k+5)}\}$.\\
 For $c\geq 10$,
$r\{2,3^c,5^{(5k+9)}\}=R\{3^6,5^2\}+r\{2,3^{(c-6)},5^{(5k+7)}\}$.
So the existence follows from Tables \ref{TablePerfect}, \ref{TableLinear} and case i).

iii) Let $d\equiv1 \pmod5$. For $c=4,\ldots,7$, we constructed a linear realization in
\cite{PP} and for $c=8$ we have
$r\{2,3^8,5^{(5k+6)}\}=R\{3^4,5^4\}+r\{2,3^4,5^{(5k+2)}\}$.\\
Furthermore, for all $t\geq 0$ we have $
r\{2,3^{(3t+9)},5^{(5k+6)}\}=R\{3^6,5^{(5k+2)}\}+r\{2,3^{(3t+3)},5^4\}$,
$r\{2,3^{(3t+10)},5^{(5k+11)}\}=R\{3^6,5^2\}+r\{2,3^{(3t+4)},5^{(5k+9)}\}$ and
$r\{2,3^{(3t+11)},5^{(5k+6)}\}$ $=R\{3^6,5^{(5k+2)}\}+r\{2,3^{(3t+5)},5^4\}$.
Hence the existence follows from Table \ref{TablePerfect} and cases i), ii).

iv) Let $d\equiv3\pmod5$. For $c=4$ look at Table \ref{TableLinear} and for $c=5,6,7$
look at \cite{PP}. For $c=8$ we have
$r\{2,3^8,5^{(5k+8)}\}=R\{3^4,5^4\}+r\{2,3^4,5^{(5k+4)}\}$.\\
Furthermore, for all $t\geq 0$ we have
$r\{2,3^{(3t+9)},5^{(5k+8)}\}=R\{3^6,5^{(5k+3)}\}+r\{2,$ $3^{(3t+3)},5^5\}$
$r\{2,3^{(3t+10)},5^{(5k+8)}\}=R\{3^6,5^{(5k+3)}\}+r\{2,3^{(3t+4)},5^5\}$ and
$r\{2,3^{(3t+11)},5^{(5k+8)}\}$ $=R\{3^6,5^{(5k+2)}\}+r\{2,3^{(3t+5)},5^6\}$.
Hence the existence follows from Table \ref{TablePerfect} and cases ii), iii).

v) Let $d\equiv0\pmod5$. For $c=5$ look at Table \ref{TableLinear} and for $c=6,7$ we refer to \cite{PP}. For $c=8,9$ we have
$r\{2,3^c,5^{(5k+5)}\}=R\{3^4,5^4\}+r\{2,3^{(c-4)},5^{(5k+1)}\}$.\\
For $c\geq 10$ we have $r\{2,3^c,5^{(5k+10)}\}=R\{3^6,5^3\}+r\{2,3^{(c-6)},5^{(5k+7)}\}$.
So the existence follows from Table \ref{TablePerfect} and cases i), iii).
\end{proof}

\begin{prop}\label{(0,2,c,d)}
Assume $d\geq 3$ and $L\neq \{2^2,3^3,5^{(5k+8)}\}$. The list $L=\{2^2,3^c,5^d\}$ is
linearly realizable if, and only if, either $c=2$ and $d\neq 5k+7,5k+8,5k+9$ or $c\geq
3$.
\end{prop}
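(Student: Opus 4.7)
The plan is to follow exactly the template used in Lemmas \ref{bc}--\ref{(0,b,0,d)} and Proposition \ref{(0,1,c,d)}: first exclude small values of $c$ by a general obstruction, handle the boundary case $c = 2$ by a mixture of direct verification and impossibility, and then reduce the case $c \geq 3$ to known results via the composition $(*)$ and the perfect realizations catalogued in Table \ref{TablePerfect}.

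The cases $c = 0, 1$ are immediate from Remark \ref{a+b+c}: with $a = 0$ and $b = 2$ one has $a + b + c \leq 3 < 4$, so no linear realization exists whenever $d \geq 1$. Hence we may assume $c \geq 2$.

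For $c = 2$, I would first verify the small base cases $d \in \{3, 4, 5, 6\}$ by direct construction (the perfect realization $R\{2^2,3^2,5^3\}$ is already available from the preamble of this section) and confirm by exhaustive check the non-realizability of $d = 7, 8, 9$. The positive infinite families $d \equiv 0, 1 \pmod{5}$ should then be obtained by parametric direct constructions of the kind deferred to \cite{PP}, since the scarcity $b = 2$ of $2$'s and $c = 2$ of $3$'s leaves no room to peel off most perfect pieces of Table \ref{TablePerfect} via $(*)$. The non-realizability of the three excluded families $d = 5k+7, 5k+8, 5k+9$ with $k \geq 1$ will have to be propagated from the base impossibility; the natural tool is a residue-class count showing that the four non-multiples of $5$ in $L$ are exactly the minimum forced by condition (\ref{B}), which pins down the way the length-$5$ edges partition the path into monochromatic arcs modulo $5$ and then contradicts the cardinalities of these classes when $d \equiv 2, 3, 4 \pmod{5}$.

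For $c \geq 3$, the argument proceeds by splitting on the residue class of $d$ modulo $5$ and, within each class, peeling off a perfect piece from Table \ref{TablePerfect}. For $c \geq 6$, decompositions such as $r\{2^2, 3^c, 5^{5k+j}\} = R\{3^6, 5^{5k+j'}\} + r\{2^2, 3^{c-6}, 5^{j''}\}$ reduce to instances of small $d$ already covered by Lemma \ref{bc}, Lemma \ref{(0,b,c,1)}, Lemma \ref{(0,b,c,2)} and Proposition \ref{(0,1,c,d)}. For $c \in \{3, 4, 5\}$ one composes instead with the small perfect realizations $R\{2^2, 3^2, 5^3\}$ or $R\{2^2, 3^3, 5\}$ and invokes Theorem \ref{KDF} on the residual list supported in $\{1, 2, 3\}$. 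The single excluded family $\{2^2, 3^3, 5^{5k+8}\}$ falls precisely in the residual gap which none of these compositions reach, which is why the statement carves it out.

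The main obstacle will be case $c = 2$: the limited supply of $2$'s and $3$'s prevents a clean application of $(*)$, so the positive side must be handled by ad hoc direct parametric constructions and the negative side by a careful non-realizability argument for three infinite families beyond the finite base cases. The case $c \geq 3$, by contrast, is routine once the perfect realizations of Table \ref{TablePerfect} are in hand.
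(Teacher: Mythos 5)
Your overall outline --- eliminating $c\le 1$ via Remark \ref{a+b+c}, treating $c=2$ by direct base constructions plus non-existence checks, and reducing $c\ge 3$ by peeling perfect pieces according to the residue of $d$ modulo $5$ --- is the same as the paper's. But the concrete reductions you propose for $3\le c\le 7$ fail, and this is exactly the delicate range. If you peel $R\{2^2,3^2,5^3\}$ or $R\{2^2,3^3,5\}$ off $\{2^2,3^c,5^d\}$, the residual is $\{3^{c-2},5^{d-3}\}$ or $\{3^{c-3},5^{d-1}\}$: for general $d$ this still contains $5$'s, so it is \emph{not} supported in $\{1,2,3\}$ and Theorem \ref{KDF} cannot be invoked on it. Since the only Table \ref{TablePerfect} pieces that absorb an unbounded number of $5$'s without using $2$'s are $R\{3^4,5^4\}$ and $R\{3^6,5^{(5k+2)}\}$, $R\{3^6,5^{(5k+3)}\}$, peeling one of them from $\{2^2,3^c,5^d\}$ leaves $\{2^2,3^{c-4},5^{\ast}\}$ or $\{2^2,3^{c-6},5^{\ast}\}$; for $c\in\{3,4,5\}$ (first piece) or $c\in\{6,7\}$ (second piece) the residual has $a+b+c<4$ and is never realizable by Remark \ref{a+b+c}. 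So your threshold ``$c\ge 6$ with $R\{3^6,\cdot\}$'' is wrong (one needs $c\ge 8$ there), and the cases $c\in\{3,4,5\}$ (and, depending on $d\bmod 5$, up to $c=7$) genuinely require the direct parametric constructions that the paper defers to \cite{PP}, not a composition. Moreover, the residual lists that do arise keep $b=2$, so they are \emph{not} covered by Lemma \ref{(0,b,c,1)}, Lemma \ref{(0,b,c,2)} (both need $b\ge 3$) or Proposition \ref{(0,1,c,d)} ($b=1$), which you cite; the recursion in the paper is into the $d=3,4$ base cases and earlier residue classes of this very proposition, together with the entries $(0,2,3,5k+2)$ of Table \ref{TableLinear}.

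Two smaller points. First, your non-existence argument for $c=2$, as sketched, derives a contradiction ``when $d\equiv 2,3,4\pmod 5$''; but $\{2^2,3^2,5^3\}$ and $\{2^2,3^2,5^4\}$ \emph{are} realizable, so the obstruction must be sensitive to $d\ge 7$ (the correct skeleton is that the four non-multiples of $5$ force five monotone runs, one per residue class mod $5$, joined by edges of length $2$ or $3$ between run endpoints, which is only impossible once the classes are long enough); also, non-existence cannot simply be ``propagated'' from $d=7,8,9$. The paper itself dismisses this as routine, so this is a matter of precision rather than a fatal flaw. Second, the excluded list $\{2^2,3^3,5^{(5k+8)}\}$ is not shown non-realizable; it is simply left undecided here and given a cyclic realization directly in Section 4, so ``carved out'' is the right intuition but it is a hypothesis, not a conclusion.
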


\begin{proof}
Observe that $L$ is not linearly realizable when $c=0,1$ (see Remark \ref{a+b+c}).
First, if either $d=3$ and $c\geq 4$
or $d=4$ and $c\geq 3$ we described a linear realization of $L$ in \cite{PP}.
A linear
realization of
$\{2^2,3^2,5^3\}, \{2^2,3^3,5^3\}$ and $\{2^2,3^ 2, 5^4\}$ can be easily constructed. It follows that
the lists $\{2^2,3^c,5^3\},\{2^2,3^c,5^4\}$ are linearly realizable if, and only if, $
c\geq 2$.\\
Now, we split the proof into five cases according to the congruence class of
$d$ modulo $5$.

i) Let $d\equiv1\pmod5$. For $c=2,\ldots,6$ we refer to \cite{PP} and for $c=7$ we have
$r\{2^2,3^7,5^{(5k+6)}\}=R\{3^4,5^4\}+r\{2^2,3^3,5^{(5k+2)}\}$.\\
Furthermore, for $c\geq 8$ we have
$r\{2^2,3^c,5^{(5k+6)}\}=R\{3^6,5^{(5k+3)}\}+r\{2^2,3^{(c-6)},5^3\}$. Hence the existence
follows from Tables \ref{TablePerfect} and \ref{TableLinear}.

ii) Let $d\equiv0\pmod5$. For $c=2,\ldots,5$ look at \cite{PP}. For $c=6,7$ we have
$r\{2^2,3^6,5^{(5k+5)}\}=R\{3^4,5^4\}+r\{2^2,3^2,5^{(5k+1)}\}$ and
$R\{2^2,3^7,5^{(5k+5)}\}=R\{3^4,5^4\}$ $+R\{2^2,3^3,5^{(5k+1)}\}$.\\
For $c\geq 8$, we have
$r\{2^2,3^c,5^{(5k+5)}\}=R\{3^6,5^{(5k+2)}\}+r\{2^2,3^{(c-6)},$ $5^3\}$.
So the existence follows from Table \ref{TablePerfect} and case i).

iii) Let $d\equiv2\pmod5$. It is a routine to check that $r\{2^2,3^2,5^{(5k+7)}\}$ does not exist. A
realization of $\{2^2,3^3,5^{(5k+2)}\}$ is described in Table \ref{TableLinear}. For
$c=4,\ldots,7$ look at \cite{PP}.\\
For $c\geq 8$, we have
$r\{2^2,3^c,5^{(5k+7)}\}=R\{3^6,5^2\}+r\{2^2,3^{(c-6)},5^{(5k+5)}\}$.
So the existence follows from Table \ref{TablePerfect} and case ii).

iv) Let $d\equiv4\pmod5$.  It is a routine to check that $r\{2^2,3^2,5^{(5k+9)}\}$ does not exist.
For $c=3,4,5$ we refer to \cite{PP} and for $c\geq 6$ we have
$\{2^2,3^c,5^{(5k+9)}\}=R\{3^4,5^4\}+r\{2^2,3^{(c-4)},5^{(5k+5)}\}$,
so the existence follows from case ii).

v) Let $d\equiv 3\pmod5$.  It is a routine to check that $r\{2^2,3^2,5^{(5k+8)}\}$ does
not exist.
For
$c=4,5,6$ look at \cite{PP} and for $c=7$ we have
$r\{2^2,3^7,5^{(5k+8)}\}=R\{3^4,5^4\}+r\{2^2,3^3,5^{(5k+4)}\}$.\\
Finally, for $c\geq 8$ we have
$r\{2^2,3^c,5^{(5k+8)}\}=R\{3^6,5^3\}+r\{2^2, $ $3^{(c-6)},$ $5^{(5k+5)}\}$,
hence the existence follows from Table \ref{TablePerfect} and cases ii), iv).
\end{proof}

We can now prove Theorem \ref{lin a=0}.
\begin{proof}[Proof of Theorem \emph{\ref{lin a=0}}]
In view of Lemmas \ref{(0,b,c,1)} and \ref{(0,b,c,2)} we may assume $d\geq 3$.
First, suppose $b=3$.
For $c=1$, the existence of a $r\{2^3,3,5^3\}$ can be easily obtained; on the other hand it is not hard
to see that $L=\{2^3,3,5^{(5k+8)}\}$ does not have a linear
realization. For $d=5k,5k+1,5k+2,5k+4$ we construct the linear realizations in \cite{PP}.
Also, the cases $c=2,3,4$ are dealt with in \cite{PP}.

Now, assume $c=5$.
If $d=5k+2$ we refer to \cite{PP}. If $d=3$ we obtain the result by direct constructions.
For $d\geq 4$, with $d\not\equiv 2\pmod 5$ we have
$r\{2^3,3^5,5^d\}=R\{3^4,5^4\}+r\{2^3,3,5^{(d-4)}\}$.\\
Now, suppose $c\geq 6$.  For $d=3,4,5$, if $c=6$ it is easy to construct a linear realization, if $c\geq7$ we have
$r\{2^3,3^c,5^3\}=R\{3^6,5^2\}+r\{2^3,3^{(c-6)},5\}$,
$r\{2^3,3^c,5^4\}=R\{3^6,5^2\}+r\{2^3,3^{(c-6)},$ $5^2\}$ and
$r\{2^3,3^c,5^5\}=R\{3^6,5^3\}+r\{2^3,3^{(c-6)},5^2\}$.
In this case the existence follows from Table \ref{TablePerfect} and
Lemma \ref{bc}  (iii), (iv).
For $d\geq 6$ we have $r\{2^3,3^c,5^d\}=R\{3^4,5^4\}+r\{2^3,3^{(c-4)},5^{(d-4)}\}$.

Next, assume $b\geq 4$. We split the proof into five cases according to the congruence
class of $d$ modulo $5$.

i) If $d\equiv 3\pmod 5$ we start considering the case $b=4$. For $c=1,\ldots,5$ we
refer to \cite{PP}, for $c=6$ we have
$r\{2^4,3^6,5^{(5k+3)}\}  =R\{2^2,3^3,5\}+r\{2^2,3^3,5^{(5k+2)}\}$ and for $c\geq 7$
we obtain $R\{2^4,3^c,5^{(5k+3)}\}  = R\{2^2,3^3\}+R\{2^2,3^{(c-3)},5^{(5k+3)}\}$.
The existence follows from Table \ref{TableLinear}, Theorem
\ref{KDF} (iii) and Proposition \ref{(0,2,c,d)}.\\
Consider the case $b=5$: for $c=1,\ldots,4$ look at \cite{PP} and for $c\geq 5$ we
have $r\{2^5,3^c,5^{(5k+3)}\}$ $ =
R\{2^2,3^3\}+r\{2^3,3^{(c-3)},5^{(5k+3)}\}.$
So the existence follows from Theorem \ref{KDF} (iii) and above construction for $b=3$.\\
Now, let $b=6$. For $c=1$ we refer to \cite{PP}, for $c=2$ we have
$r\{2^6,3^2,5^{(5k+3)}\}  = R\{2^4,5^{(5k+3)} \}+r\{ 2^2,3^2 \}$ and for $c\geq 3$ we
have $r\{2^6,3^c,5^{(5k+3)}\}  =  R\{2^2,3^3\}+r\{2^4,3^{(c-3)},$ $5^{(5k+3)} \}$.
Thus, the existence follows from Table \ref{TablePerfect} and Theorem \ref{KDF} (iii).\\
If $b=7$ we have $r\{2^7,3^c,5^{(5k+3)}\} = R\{2^4,5^{(5k+3)}\}+r\{2^3,3^c\}$
for $c=1,\ldots,8$ and $r\{2^7,3^c,5^{(5k+3)} \}  = R\{3^6,5^{(5k+3)}\}+r\{2^7,3^{(c-6)}\}$
for $c \geq 9$.
So, the result follows from Table \ref{TablePerfect} and Theorem \ref{KDF} (i), (ii).\\
Assume $b\geq 8$. If $c=1,2$ we have
$$\left.\begin{array}{l}
R\{2^8,3,5^{(5k+3)}\}  = R\{2^4,5^{(5k+3)}\}+R\{2^4,3\},\\
r\{2^b,3,5^{(5k+3)}\}  =
R\{2^6,5^{(5k+1)}\}+r\{2^{(b-6)},3,5^2\}\quad \textrm{for }  b\geq 9,\\
r\{2^8,3^2,5^{(5k+3)}\}= R\{2^4,3\}+r\{2^4,3,5^{(5k+3)}\},\\
r\{2^b,3^2,5^{(5k+3)}\} =
R\{2^6,3,5^{(5k+1)}\}+r\{2^{(b-6)},3,5^2\}\quad \textrm{for }  b\geq 9,
\end{array}\right.$$
hence the existence follows from Table \ref{TablePerfect}, Theorem \ref{KDF} (iii) and
Lemma \ref{bc} (i).
If $c\geq 3$, then
$r\{2^b,3^c,5^{(5k+3)}\}=R\{2^4,5^{(5k+3)}\}+r\{2^{(b-4)},3^c\}$,
so the result follows from Table \ref{TablePerfect} and Theorem \ref{KDF} (i).

ii) If $d\equiv 2\pmod 5$ we can suppose $d\geq 7$, anyway in some cases we construct
linear realizations  also for $d=2$.\\
Let $c=1$. For $b=4,\ldots,7$ look at \cite{PP} and for $b\geq 8$ we have
$r\{2^b,3,5^{(5k+7)}\}=R\{2^4,5^4\}+r\{2^{(b-4)},3,5^{(5k+3)}\}$.
So the existence follows from Table \ref{TablePerfect} and case i).\\
 Take now $c=2$. For
$b=4,5,6$ we construct the linear realizations in \cite{PP} and for $b\geq 7$ we have
$r\{2^b,3^2,5^{(5k+7)}\}=R\{2^4,5^4\}+r\{2^{(b-4)},3^2,5^{(5k+3)}\}$.
Hence the result follows from Table \ref{TablePerfect} and
case i).\\
Take $c=3$. For $b=4,5$ look at \cite{PP}. For $b=6$ we have
$r\{2^6,3^3,5^{(5k+2)}\} =  R\{2^2,3^3\}+r\{2^4,5^{(5k+2)}\}$ and for $b\geq 7$ we have
$r\{2^b,3^3,5^{(5k+7)}\}=R\{2^4,5^4\}+r\{2^{(b-4)},3^3,5^{(5k+3)}\}$.
Hence the existence follows from Theorem \ref{KDF} (iii), Tables  \ref{TablePerfect}
and \ref{TableLinear} and
case i).\\
Now take $c=4$. In this case we have
$$\left.\begin{array}{l}
R\{2^4,3^4,5^{(5k+7)}\}  =  R\{3^4,5^4\}+R\{2^4,5^{(5k+3)}\},\\
r\{2^b,3^4,5^{(5k+7)}\}
=  R\{2^2,3^3\}+r\{2^{(b-2)},3,5^{(5k+7)}\}\quad \textrm{if } b\geq 5,
\end{array}\right.$$
so the existence follows from Theorem \ref{KDF} (iii) and Table \ref{TablePerfect}.\\
Finally, for $c\geq 5$ we have
$r\{2^b,3^c,5^{(5k+7)} \}=R\{3^4,5^4\}+r\{2^b,3^{(c-4)},5^{(5k+3)}\}$,
so the existence follows from case i).

iii) Let $d\equiv1\pmod 5$. We can suppose $d\geq 6$, anyway
 in some cases we construct linear realizations also for $d=1$.\\
Let $b=4$. For $c=1,2,3$ we refer to \cite{PP} and for $c\geq 4$ we have
$r\{2^4,3^c,5^{(5k+6)}\}  =
R\{3^4,5^4\}+r\{2^4,3^{(c-4)},5^{(5k+2)}\}$.
So the result follows from case ii) and Table \ref{TableLinear}.\\
Let $b=5$. For $c=1,2,3$ look at \cite{PP} and for $c\geq 4$ we have
$r\{2^5,3^c,5^{(5k+1)}\}  = R\{2^2,3^3\}+r\{2^3,3^{(c-3)},5^{(5k+1)}\}$,
so the existence
follows from Theorem \ref{KDF} (iii)
and the above construction for $b=3$.\\
For $b=6$, if $c=1$ look at Table \ref{TablePerfect}, for $c=2$ look at \cite{PP} and
for $c\geq 3$ we have
$r\{2^6,3^c,5^{(5k+6)}\}=R\{2^4,3\}+r\{2^2,3^{(c-1)},5^{(5k+6)}\}$.
Hence the existence follows from Theorem \ref{KDF} (iii) and Proposition \ref{(0,2,c,d)}.\\
For $b\geq 7$, we have
$r\{2^b,3^c,5^{(5k+6)}\}=R\{2^4,5^{(5k+4)}\}+r\{2^{(b-4)},3^c,5^2\}$.
In this case the existence follows from Table \ref{TablePerfect} and Lemma \ref{(0,b,c,2)}.

iv) Let $d\equiv 0\pmod 5$.  We can assume $d\geq 5$, anyway sometimes we
construct linear realizations also for $d=0$.\\
Consider $b=4$. For $c=1,\ldots,4$ we list the linear realizations in \cite{PP} and for
$c\geq 5$ we have
$r\{2^4,3^c,5^{(5k+5)}\}=R\{2^2,3^3\}+r\{2^2,3^{(c-3)},5^{(5k+5)}\}$, hence the existence
follows from Theorem \ref{KDF} (iii) and Proposition \ref{(0,2,c,d)}.\\
Let $b=5$. For $c=1,2,3$ we refer to \cite{PP} and for $c\geq 4$ we have
$r\{2^5,3^c,5^{(5k+5)}\}=R\{2^2,3^3\}+r\{2^3,3^{(c-3)},5^{(5k+5)}\}$,
so the existence follows from Theorem \ref{KDF} (iii) and
the above construction for $b=3$.\\
Let $b=6$. If $c=1$ look at \cite{PP}. If $c=2,3$ we have
$r\{2^6,3^2,5^{(5k+5)}\}  =
R\{2^4,5^{(5k+4)}\}+r\{2^2,3^2,5\}$ and
$r\{2^6,3^3,5^{(5k+5)}\}  = R\{2^4,3,5^{(5k+5)}\}+r\{2^2,3^2\}$,
so the result follows from Table \ref{TablePerfect}, Theorem \ref{KDF} (iii)
and Lemma \ref{bc} (ii).
For $c\geq 4$, we have
$r\{2^6,3^c,5^{(5k+5)}\}=R\{2^2,3^3\}+r\{2^4,3^{(c-3)},5^{(5k+5)}\}$.
Here the existence follows from Table \ref{TablePerfect}.\\
For $b\geq 7$ we have
$r\{2^b,3^c,5^{(5k+5)}\}=R\{2^4,5^{(5k+3)}\}+r\{2^{(b-4)},3^c,5^2\}$,
so the existence follows from Table \ref{TablePerfect}
and Lemma \ref{(0,b,c,2)}.

v) Let $d\equiv 4\pmod 5$. Consider $b=4$. For $c=1,2,3$ we refer to \cite{PP}. For $c=4$
we have $r\{2^4,3^4,5^{(5k+4)}\}  = R\{2^2,3^2,5^3\}+r\{2^2,3^2,5^{(5k+1)}\}$ and for
$c\geq 5$ we have
$r\{2^4,3^c,5^4\}  =R\{3^4,5^4\}+r\{2^4,3^{(c-4)}\}$ and $r\{2^4,3^c,5^{(5k+9)}\} =
R\{3^4,5^4\}+r\{2^4,3^{(c-4)},5^{(5k+5)}\}$.
Hence the existence follows from Theorem
\ref{KDF} (i), (iii), Proposition \ref{(0,2,c,d)} and the above case iv).\\
Suppose now $b\geq 5$.
Let $c=1$. If $b=5,6$ we refer to \cite{PP}. For $b=7,\ldots,10$ we have
$r\{2^b,3,5^{(5k+4)}\}=R\{2^4,5^{(5k+3)}\}+r\{2^{(b-4)},3,5\}$ and for $b\geq 11$ we have
$r\{2^b,3,5^{(5k+4)}\}=R\{2^8,5^{(5k+2)}\}+r\{2^{(b-8)},3,5^2\}$.
Hence, the existence follows from Table \ref{TablePerfect}, Lemma \ref{bc} (i) and Lemma
\ref{(0,b,c,1)}. Assume $c\geq 2$. If $(b,c)=(5,2)$ we construct the linear realization
in \cite{PP}. Furthermore, we have
$$\left.\begin{array}{l}
r\{2^5,3^c,5^{(5k+4)}\}= R\{2^2,3^2,5^3\}+r\{2^3,3^{(c-2)},5^{(5k+1)}\},\quad \textrm{if }
c\geq 3,\\
r\{2^6,3^c,5^4\}=R\{2^4,5^4\}+r\{2^2,3^c\}, \quad\textrm{if  } c=2,3,\\
r\{2^6,3^c,5^4\}=R\{2^2,3^3,5\}+r\{2^4,3^{(c-3)},5^3\}\quad \textrm{if } c\geq 4,\\
r\{2^6,3^c,5^{(5k+9)}\}=R\{2^4,5^3\}+r\{2^2,3^c,5^{(5k+6)}\},\\
r\{2^b,3^c,5^{(5k+4)}\}=R\{2^4,5^{(5k+3)}\}+r\{2^{(b-4)},3^c,5\},\quad \textrm{if }
b\geq 7.
\end{array}\right.$$
The existence follows from Table \ref{TablePerfect}, Theorem \ref{KDF} (iii), Lemma
\ref{(0,b,c,1)},
Proposition \ref{(0,2,c,d)}, case i) and the above construction for $b=3$.
\end{proof}

\subsection{Case $a=1$}

In the following we consider only lists of the form $L=\{1,2^b,3^c,5^d\}$.

\begin{prop}\label{(1,0,c,d)}
Let $d\geq 1$. The list $L=\{1,3^c,5^d \}$ has a
linear realization if, and only if, $(c,d)$ satisfies one of the following:
\begin{enumerate}
\item[\rm{(i)}] $c=3$ and $d\neq 5k+1,5k+8,5k+10$, for all $k\geq 0$.
\item[\rm{(ii)}] $4\leq c\leq 6$;
\item[\rm{(iii)}] $c=7$ and $d\neq 2$;
\item[\rm{(iv)}] $c\geq 8$ and $d\geq 3$.
\end{enumerate}
\end{prop}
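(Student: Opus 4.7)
The plan is to follow exactly the template used in Propositions \ref{(0,1,c,d)} and \ref{(0,2,c,d)}: split off the non-realizable lists, then handle existence by direct constructions for small $(c,d)$ and by the composition $(\ast)$ for the rest, stratified by the residue of $d$ modulo $5$.

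For non-existence the case $c \leq 2$ is immediate from Remark \ref{a+b+c} since $a+b+c = 1+c < 4$. For the other excluded families --- $c=3$ with $d \in \{5k+1, 5k+8, 5k+10\}$, $c=7$ with $d=2$, and $c \geq 8$ with $d \leq 2$ --- I would settle the smallest instances by a finite computer check, and reduce the three infinite $c = 3$ families to their bases by a descent argument: a putative realization of $\{1, 3^3, 5^{5k+j}\}$ should, after peeling off an appropriate perfect initial segment made of $5$-edges, induce a realization of the corresponding smaller excluded instance, contradicting the base check.

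For existence I would first collect explicit realizations of the base cases --- small $c$ with small $d$ in each residue class modulo $5$ --- relegating the constructions to \cite{PP} in the style of the preceding proofs. The engine for large cases is the composition $(\ast)$ built around $R\{1, 3^4, 5^{5k+2}\}$ from Table \ref{TablePerfect}: for sufficiently large $c$ and $d$,
\[
r\{1, 3^c, 5^d\} \;=\; R\{1, 3^4, 5^{5k+2}\} + r\{3^{c-4}, 5^{d-5k-2}\},
\]
reducing the problem to a pure $\{3^{c'}, 5^{d'}\}$ list, which can then be realized by iterating the perfect piece $R\{3^4, 5^4\}$ onto a small base. The companion perfect pieces $R\{3^6, 5^{5k+2}\}$ and $R\{3^6, 5^{5k+3}\}$ from the same table handle the residues of $d$ modulo $5$ that the primary composition misses. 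Case (i), where $c = 3$, falls outside the composition framework (it would require $c \geq 4$), and must therefore be settled by direct constructions within each residue class of $d$ modulo $5$ --- precisely the setup in which the three exceptional families at $d = 5k+1, 5k+8, 5k+10$ appear naturally.

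The main obstacle will be the non-realizability of the three infinite families at $c = 3$: condition (\ref{B}) is satisfied in each of these instances, so the divisibility argument of Proposition \ref{HR iff B} gives no obstruction and a purely counting-based approach cannot suffice. One really needs a structural analysis of how the lone $1$-edge and the three $3$-edges can link the five congruence classes modulo $5$ on a Hamiltonian path whose remaining edges all preserve the class mod $5$. A secondary difficulty is the bookkeeping of residues $d \bmod 5$ needed in the composition step, since each application of $(\ast)$ must be checked to land in a previously realized base case.
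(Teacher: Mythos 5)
Your overall architecture (non-existence for $c\leq 2$ via Remark \ref{a+b+c}, then composition $(\ast)$ stratified by $d\bmod 5$) matches the paper's, but two of your concrete steps would fail. First, the descent you propose for the non-realizability of the infinite families $\{1,3^3,5^{5k+1}\}$, $\{1,3^3,5^{5k+8}\}$, $\{1,3^3,5^{5k+10}\}$ does not work: an arbitrary hypothetical realization of $\{1,3^3,5^{5k+j}\}$ has no reason to begin with a ``perfect initial segment made of $5$-edges'' that can be peeled off, so it does not induce a realization of the smaller excluded instance. The composition $(\ast)$ only builds realizations up; it gives no canonical decomposition of a given Hamiltonian path. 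What is actually needed is the structural argument you yourself flag as the ``main obstacle'': deleting the four edges of length not divisible by $5$ leaves five runs, one per residue class modulo $5$, each of which must traverse its class monotonically between its extremal elements, and one then checks that the four short transition edges cannot be arranged consistently for these values of $d$. The paper states these non-existence facts without proof (``one can check''), but your proposed mechanism for them is wrong, not merely terse.

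Second, your existence engine $r\{1,3^c,5^d\}=R\{1,3^4,5^{5k+2}\}+r\{3^{c-4},5^{d'}\}$ pushes the lone $1$ into the perfect prefix and recurses on pure $\{3^{c'},5^{d'}\}$ lists. That family is not characterized anywhere in the paper, and it genuinely misbehaves: by Remark \ref{a+b+c} no list $\{3^{c'},5^{d'}\}$ with $c'\leq 3$ and $d'\geq 1$ is realizable, and even $\{3^4,5\}$ fails (there are only three edges of length $3$ in $K_6$). Consequently your engine can only ever apply when $c\geq 8$, yet the proposition asserts realizability for all $d\geq 1$ when $4\leq c\leq 6$ and for all $d\neq 2$ when $c=7$: for these fixed small $c$ and unbounded $d$ neither your finite stock of ``small $c$, small $d$'' bases nor the composition reaches them, and there is no perfect piece of the form $R\{5^e\}$ to absorb the extra fives. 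The paper resolves exactly this by keeping the $1$ in the non-perfect tail --- using pure perfect prefixes $R\{3^4,5^4\}$, $R\{3^6,5^{5k+2}\}$, $R\{3^6,5^{5k+3}\}$ from Table \ref{TablePerfect} so that the recursion stays inside the family $\{1,3^{c'},5^{d'}\}$ with $c'$ reduced by $4$ or $6$ --- and by supplying explicit $k$-parametrized constructions in \cite{PP} for each $c\in\{3,\dots,7\}$ (occasionally $8$) and each residue of $d$ modulo $5$. You would need to reorganize your induction along those lines for the existence half to close.
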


\begin{proof}
If $c\leq 2$, the result follows from Remark \ref{a+b+c}.
If $d=1$, we obtain that $\{1,3^c,5\}$ is linearly realizable if, and only if,
$c=4,5,6,7$. Futhermore, $\{1,3^c,5^2\}$ is linearly realizable if, and only if,
$c=3,4,5,6$.

Now, consider the cases $d=3,4$. It is not hard to see that the lists
$\{1,3^3,5^3\}, \{1,3^3,5^4\},$
$\{1,3^4,5^3\}$ and $\{1,3^4,5^4\}$ admit a linear realization.
Furthermore, we obtain a linear realization if $d=3,4$ and $c\geq 5$ in \cite{PP}. So in the following
we can restrict our investigation at $c\geq 3$ and $d\geq5$.
Anyway in some cases
our constructions hold also for $d=1,\ldots,4$.
We split the proof into five cases according to the congruence class of $d$ modulo
$5$.

1) Let $d\equiv1\pmod5$. It is not hard to see that a linear realization of
$\{1,3^3,$ $5^{(5k+1)}\}$ does not exist.
For $c=4,\ldots,7$ we refer to \cite{PP}. For $c=8$, we have
$R\{1,3^8,5^{(5k+6)}\}=R\{3^4,5^4\}+R\{1,3^4,5^{(5k+2)}\}$
and for $ c\geq 9$,
$r\{1,3^c,5^{(5k+6)}\}=R\{3^6,5^{(5k+2)}\}+r\{1,3^{(c-6)},$ $5^4\}$.
So the existence follows from Table \ref{TablePerfect} and the above construction.

2) Let $d\equiv 2\pmod 5$. For $c=4$ a linear realization is contained in Table
\ref{TablePerfect} and for $c=3,5,6,7$ look at \cite{PP}.
For $c=8$ we have
$r\{1,3^8,5^{(5k+7)}\}=R\{3^4,5^4\}+r\{1,3^4,5^{(5k+3)}\}$
and for $c\geq 9$,
$r\{1,3^c,5^{(5k+7)}\}=R\{3^6,5^{(5k+3)}\}+r\{1,3^{(c-6)},5^4\}$.
Hence the existence follows from Tables \ref{TablePerfect} and \ref{TableLinear} and the
above construction.

3) Let $d\equiv 0\pmod 5$, $d>0$. One can check that a linear realization of
$\{1,3^3,5^{(5k+10)}\}$ cannot exist, while a $r\{1,3^3,5^{5}\}$ can be easily constructed.
For $c=4,\ldots,7$  look at \cite{PP}, for $c=8$ we have
$r\{1,3^8,5^{(5k+5)}\}=R\{3^4,5^4\}+r\{1,3^4,5^{(5k+1)}\}$
and for $c\geq 9$, $r\{
1,3^c,5^{(5k+5)}\}=R\{3^6,5^{(5k+2)}\}+r\{1,3^{(c-6)},5^3\}$. In this case the
existence follows from Table \ref{TablePerfect}, case 1) and the above
construction.

4) Let $d\equiv 4 \pmod 5$. For $c=3,\ldots,7$ we refer to \cite{PP}
and for $c\geq 8$ we have
$r\{1,3^c,5^{(5k+9)}\}=R\{3^4,5^4\}+r\{1,3^{(c-4)},5^{(5k+5)}\}$.
So the existence follows from case 3).

5) Let $d\equiv 3\pmod 5$. It can be seen that a linear realization of
$\{1,3^3,5^{(5k+8)}\}$
does not exist. For $c=4$ the realization is contained in Table \ref{TableLinear},
for $c=5,6$ they are contained in \cite{PP}
and for $c\geq 7$,
$r\{1,3^c,5^{(5k+8)}\} =R\{3^4,5^4\} + r\{1,3^{(c-4)},5^{(5k+4)}\}$.
Hence the existence follows from case 4).
\end{proof}

\begin{prop}\label{(1,1,c,d)}
Let  $d\geq 1$. The list $L=\{1,2,3^c,5^d \}$ has a
linear realization if, and only if, either $c=2$ and $d\neq 5k+7,5k+9$,  for
all $k\geq 0$, or $c\geq 3$.
\end{prop}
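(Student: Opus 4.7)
The plan is to mirror the structure of the proofs of Propositions \ref{(1,0,c,d)} and \ref{(0,2,c,d)}, which handle essentially the same kind of four-parameter families. Non-realizability is immediate when $c \leq 1$: in that case $a+b+c = 2 + c \leq 3 < 4$, so no linear realization exists by Remark \ref{a+b+c}. For the two exceptional subfamilies $c=2$ with $d=5k+7$ or $d=5k+9$, I would verify non-existence by the same kind of routine parity/gcd check used elsewhere in the paper (the two edges of lengths $1$ and $2$ provide the only chances to change residues modulo $5$, and a bookkeeping argument on where they appear forces a contradiction; this is the ``routine to check'' step from the earlier propositions).

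For the positive direction, I would first dispatch a finite set of base cases, namely $c=2$ (for the residues of $d$ modulo $5$ that are not excluded), together with the ``small-$d$'' realizations $d=1,2,3,4$ for $c \geq 3$, by invoking the direct constructions tabulated in \cite{PP} (plus a handful of easy hand-built examples for the very smallest parameters). This isolates the remaining work to $c \geq 3$ and $d \geq 5$.

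For these, I would split by the congruence class of $d$ modulo $5$, in exact analogy with the five cases of Proposition \ref{(1,0,c,d)}. In each residue class I would use the composition $(*)$ from Section 2 in the form
$$r\{1,2,3^c,5^d\} = RL_1 + r\{1,2,3^{c-m},5^{d-n}\},$$
where $RL_1$ is a perfect linear realization of some $\{3^m,5^n\}$ drawn from Table \ref{TablePerfect} (the natural choices being $R\{3^6,5^{5k+2}\}$, $R\{3^6,5^{5k+3}\}$, $R\{3^4,5^4\}$, and $R\{3^6,5^2\}$, $R\{3^6,5^3\}$). The integer $m \in \{4,6\}$ is chosen so that the residual $\{1,2,3^{c-m},5^{d-n}\}$ has either already been handled at a smaller value of $c$ in the same residue class, or else falls into one of the small-$d$ base cases already covered. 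In each of the five residue classes, only finitely many values of $c$ (typically $c=3,\ldots,8$) escape this reduction, and for those I would cite the explicit realizations in \cite{PP}.

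The main obstacle is not the induction itself, which is mechanical once the templates are in place, but rather making sure that for every residue class of $d$ modulo $5$ the small-$c$ base interval $3 \leq c \leq 7$ or $8$ is covered, since the composition only reduces $c$ by $4$ or $6$ and thus cannot by itself reach the bottom of the induction. Beyond that, the proof of the exceptional non-existence at $c=2$, $d \equiv 7, 9 \pmod{10}$ is the only conceptually delicate point; everything else is driven by the composition rule and the perfect realizations of Table \ref{TablePerfect}.
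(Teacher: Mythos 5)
Your overall architecture (Remark~\ref{a+b+c} for $c\leq 1$, direct small cases, then a five-way split on $d\bmod 5$ driven by the composition $(*)$ with perfect realizations of $\{3^m,5^n\}$ from Table~\ref{TablePerfect}) is workable and matches the machinery of Propositions~\ref{(1,0,c,d)} and~\ref{(0,2,c,d)}, but it is not the route the paper takes for the bulk of the cases, and it is considerably more laborious. The paper's actual proof disposes of all $c\geq 4$, $d\geq 7$ in one line: by Remark~\ref{RemA>a}, prepending the edge of length $1$ to a linear realization of $\{2,3^c,5^d\}$ (which exists by Proposition~\ref{(0,1,c,d)} exactly when $c\geq 4$, $d\geq 7$, apart from the single exceptional family $\{2,3^4,5^{5k+10}\}$) yields a linear realization of $\{1,2,3^c,5^d\}$; only that one exceptional family and the column $c=3$ then need direct constructions. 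Your plan instead re-runs the whole induction on $c$ within each residue class of $d$, which forces you to supply base cases $c=3,\ldots,8$ for each of the five classes; these are not the base cases tabulated in \cite{PP} for this proposition, so you cannot simply cite that source for them --- you would have to build several dozen explicit realizations yourself. The approach would close, but it buys nothing over the reduction via Remark~\ref{RemA>a}, and it risks leaving the induction's bottom uncovered, a danger you yourself flag.

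There is also a concrete error in your sketch of the non-existence argument for $c=2$, $d=5k+7$ and $d=5k+9$. You assert that ``the two edges of lengths $1$ and $2$ provide the only chances to change residues modulo $5$,'' but an edge of length $3$ also joins vertices in different residue classes modulo $5$. The list $\{1,2,3^2,5^d\}$ contains \emph{four} edges of length not divisible by $5$; deleting them leaves five segments, one per residue class modulo $5$, and the actual obstruction comes from matching the sizes of these classes in $\{0,\ldots,v-1\}$ (here $v=d+5$) against the possible ways the four crossing edges of lengths $1,2,3,3$ can link consecutive segments. As stated, your bookkeeping argument starts from a false premise and does not establish non-existence; the paper dismisses this step as routine, but if you are going to sketch the check you must account for all four non-multiples of $5$.
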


\begin{proof}
If $c\leq 1$ the assertion follows from Remark \ref{a+b+c}. Now, consider
the cases $d=1,2,3,4$.
It is not hard to solve the cases
$(c,d)\in \{(2,1), (2,2), (3,2), (4,2), (2,3), (3,3),$ $(4,3), (2,4), (3,4),(6,4)\}$.
Furthermore,
we obtained in \cite{PP} a linear realization if either $d=1$ and $c\geq 3$ or $d=2,3$ and
$c=3t+5$ for all $t\geq 0$. Also, we have
$$\left.\begin{array}{l}
r\{1,2,3^{(3t+6)},5^2\}  = R\{3^6,5^2\}+r\{1,2,3^{3t}\},\\
r\{1,2,3^{(3t+7)},5^2\}  = R\{3^6,5^2\}+r\{1,2,3^{(3t+1)}\},\\
r\{1,2,3^{(3t+6)},5^3\}  = R\{3^6,5^3\}+r\{1,2,3^{3t}\},\\
r\{1,2,3^{(3t+7)},5^3\}   =  R\{3^6,5^3\}+r\{1,2,3^{(3t+1)}\},\\
r\{1,2,3^{(3t+4)},5^4\}  =  R\{3^4,5^4\}+r\{1,2,3^{3t}\},\\
r\{1,2,3^{(3t+5)},5^4\}  = R\{3^4,5^4\}+r\{1,2,3^{(3t+1)}\}, \\
r\{1,2,3^{(3t+9)},5^4\}  = R\{3^6,5^2\}+r\{1,2,3^{(3t+3)},5^2\}.
\end{array}\right.$$
Hence the existence follows from Table \ref{TablePerfect} and Theorem \ref{KDF} (vi).
Furthermore, it is not hard to construct a linear realization of the lists $\{1,2,3^c,
5^d\}$ with $d=5,6$ and $2\leq c\leq 5$. For $c\geq 6$
we have
$r\{1,2,3^c,5^5\}=R\{3^4,5^4\}+r\{1,2,3^{(c-4)},5\}$ and
$r\{1,2,3^c,5^6\}=R\{3^4,5^4\}+r\{1,2,3^{(c-4)},5^2\}$. So in the following we can
restrict our investigation at $d\geq 7$.

For $c=2$ and $d=5k, 5k+1, 5k+3$ see \cite{PP}, however
one can check that a $r\{1,2,3^2,5^{(5k+7)}\}$ and a $r\{1,2,3^2,5^{(5k+9)}\}$
does not exist. For $c=3$ we refer to \cite{PP}. For $c\geq 4$ the result follows from
Remark \ref{RemA>a} and Proposition
\ref{(0,1,c,d)}, except when $L=
\{1,2,3^4,5^{(5k+10)}\}$. However, in this case, we have a linear realization, see
\cite{PP}.
\end{proof}

\begin{prop}\label{(1,2,c,d)}
Let $d\geq 1$. The list $L=\{1,2^2,3^c,5^d \}$ has a
linear realization if, and only if, either $c=1$ and $d\neq 5k+8$, for all
$k\geq 0$, or $c\geq 2$.
\end{prop}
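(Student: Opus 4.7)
The plan is to follow the template of Propositions \ref{(1,0,c,d)} and \ref{(1,1,c,d)}: dispose of the trivial obstructions, isolate the one non-realizable family, and reduce all remaining cases to the preceding results through the composition $(*)$. First, the case $c=0$ is immediately excluded by Remark \ref{a+b+c}, since $a+b+c=3<4$ prevents reaching every residue class modulo $5$. The work then splits into the delicate branch $c=1$ and the uniformly realizable branch $c\ge 2$.

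For $c\ge 2$ I would apply Remark \ref{RemA>a} together with Proposition \ref{(0,2,c,d)}: any linear realization of $\{2^2,3^c,5^d\}$ yields one of $\{1,2^2,3^c,5^d\}$ by prepending the edge $[0,1]$. Proposition \ref{(0,2,c,d)} already supplies a realization of $\{2^2,3^c,5^d\}$ whenever $d\ge 3$ and $(c,d)$ is not one of its listed exceptions. The residual cases---namely $c=2$ with $d\in\{1,2\}$ or $d\equiv 2,3,4\pmod 5$, the sporadic list $\{1,2^2,3^3,5^{5k+8}\}$, and $c\ge 3$ with $d\in\{1,2\}$---would be treated directly by composition $(*)$, using such anchors as
\[
R\{2^2,3^3\}+r\{1,3^{c-3},5^d\},\quad R\{2^2,3^2,5^3\}+r\{1,3^{c-2},5^{d-3}\},\quad R\{3^4,5^4\}+r\{1,2^2,3^{c-4},5^{d-4}\},
\]
together with the small-parameter constructions catalogued in \cite{PP}. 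The first reduction feeds back into Proposition \ref{(1,0,c,d)}, while the other two are descents in $d$ that eventually land on a verified anchor.

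For $c=1$ the exceptional family $d=5k+8$ requires a non-existence proof. As for the analogous statements $\{2^3,3,5^{5k+8}\}$ in Theorem \ref{lin a=0} and $\{1,2,3^2,5^{5k+7}\}$ in Proposition \ref{(1,1,c,d)}, I expect this to reduce to a short invariant argument: removing the single edges of lengths $1$ and $3$ splits any putative linear realization into three subpaths all of whose interior edges have length $2$ or $5$, and a counting argument on how these subpaths distribute across the residue classes modulo $5$ yields a contradiction when the number of $5$'s is $5k+8$; the base cases would be confirmed by a computer search, as is standard in this paper. For the realizable values of $d$ I would split by $d\bmod 5$, hand the small parameters $d\le 7$ to \cite{PP}, and for larger $d$ apply composition $(*)$ with anchors drawn from Table \ref{TablePerfect}, arranging each residue branch to descend into a verified small case of the form $\{1,2^2,3,5^{d_0}\}$.

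The main obstacle is the non-existence statement for $\{1,2^2,3,5^{5k+8}\}$, since the composition machinery cannot establish non-realizability and a bespoke invariant argument is required. Beyond that, the difficulty is purely organisational: in each residue class of $d$ modulo $5$ one must select the correct perfect-realization anchor from Table \ref{TablePerfect} so that the leftover list is covered either by a direct construction from \cite{PP} or by an earlier lemma in the paper.
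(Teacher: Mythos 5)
Your proposal follows the paper's proof essentially verbatim: exclude $c=0$ by Remark \ref{a+b+c}, reduce the bulk of $c\geq 2$ to Proposition \ref{(0,2,c,d)} via Remark \ref{RemA>a}, settle the leftover small-$d$ and exceptional cases by direct constructions from \cite{PP}, and dispose of $\{1,2^2,3,5^{(5k+8)}\}$ by a direct non-existence check (the paper offers no more detail than ``it can be checked''). One small correction to your sketch of that non-existence argument: to confine the subpaths to residue classes modulo $5$ you must delete \emph{all four} edges of length not divisible by $5$ (the $1$, both $2$'s and the $3$), obtaining five subpaths each forced to be a monotone length-$5$ progression inside one residue class, rather than deleting only the $1$ and the $3$ as you propose, since length-$2$ edges do not preserve residues modulo $5$.
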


\begin{proof}
 If $c=0$ the statement follows from Remark \ref{a+b+c}.
Let $d=1$. It is easy to obtain a linear realization of $\{1,2^2,3,5\}$ and for $c\geq 2$ we refer to \cite{PP}.
Let $d=2$. It is easy to obtain a linear realization of $\{1,2^2,3,5^2\}$
and $\{1,2^2,3^2,5^2\}$, and
for $c\geq 3$ see \cite{PP}.\\
Now, assume $d\geq 3$. For $c=1$ it can be checked that a
$r\{1,2^2,3,5^{(5k+8)}\}$ does not exist, whereas the list $\{1,2^2,3,5^3\}$ can be
linearly  realized.  Also, when $d=5k,5k+1,5k+2,5k+4$ we have a linear realization, see
\cite{PP}.
We also refer to \cite{PP} for the linear realizations of $L$ when either $c=2$ and
$d=5k+2,5k+3,5k+4$ or $(c,d)=(3,5k+3)$.
For the remaining cases, apply Remark \ref{RemA>a} and  Lemma \ref{(0,2,c,d)}.
\end{proof}

\begin{prop}\label{(1,b,0,d)}
Let $d\geq 1$. The list $L=\{1,2^b,5^d \}$ has a linear
realization if, and only if, either $b=3$ and $d\neq 5k+7, 5k+9$
for all
$k\geq 0$, or $b \geq 4$.
\end{prop}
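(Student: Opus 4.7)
The proof will follow the template of Propositions \ref{(1,0,c,d)}--\ref{(1,2,c,d)}. For $b\le 2$ non-existence is immediate from Remark \ref{a+b+c}, since $a+b+c=1+b+0\le 3<4$. For $b\ge 4$ the plan is a straightforward reduction to Theorem \ref{lin a=0}: applying Remark \ref{RemA>a} with $a=0$, $A=1$, any linear realization of $\{2^b,5^d\}$ yields one of $\{1,2^b,5^d\}$ by prepending $R\{1\}=[0,1]$. Lemma \ref{(0,b,0,d)} already produces $r\{2^b,5^d\}$ in every residue class of $d\pmod 5$ provided $b$ is large enough (every $d\ge 3$ is covered once $b\ge 10$). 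The remaining finite list of pairs --- $b\in\{4,5,6,7,8,9\}$ with $d$ outside the range of Lemma \ref{(0,b,0,d)}, together with $d\in\{1,2\}$ for every $b\ge 4$ --- would be handled either by the composition $(*)$ with a perfect realization from Table \ref{TablePerfect} (e.g.\ $R\{2^4,5^{5k+3}\}$ or $R\{2^6,5\}$) together with a short realizable tail, or by the direct constructions collected in \cite{PP}.

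For $b=3$ the composition $(*)$ is quite restricted: the only sub-lists of $\{1,2^3,5^d\}$ admitting a perfect realization are $\{1\}$ and $\{1,2^2\}$, while the entries $R\{2^{b'},5^{d'}\}$ of Table \ref{TablePerfect} all require $b'\ge 4$. In the positive direction, for every $d\notin\{5k+7,5k+9\}$ I would exhibit a linear realization directly --- small $d$ by inspection (for instance $r\{1,2^3,5^2\}=[0,5,3,2,4,6,1]$) and, for larger $d$, by giving a parametric family within each congruence class of $d\pmod 5$ that extends a base realization by inserting additional length-$5$ jumps. In the two ``dead'' residue classes $d\equiv 2,4\pmod 5$ only the base values $d=2,4$ are realizable, so no family is needed there.

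The main obstacle is the negative direction for $b=3$: showing that $\{1,2^3,5^{5k+7}\}$ and $\{1,2^3,5^{5k+9}\}$ admit no linear realization for any $k\ge 0$. The decisive observation is that when $b=3$ the list contains exactly $v-5$ copies of $5$, so a linear realization must use \emph{every} length-$5$ edge of $\{0,\ldots,v-1\}$. Deleting the four remaining edges therefore leaves the entire length-$5$ subgraph, which is the disjoint union of five monochromatic sub-paths $r,r+5,r+10,\ldots$ (one per residue class modulo $5$) whose extremities and sizes are forced. The whole Hamiltonian path must then be obtained by gluing these five sub-paths using the one length-$1$ edge and the three length-$2$ edges, each gluing joining an extremity of one sub-path to an extremity of a different one. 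Reducing modulo $5$ turns this into a small combinatorial puzzle: find an ordering of $\{0,1,2,3,4\}$ that is a Hamiltonian path on $K_5$ with exactly one $\pm 1$-step and three $\pm 2$-steps, and whose edges match the numerically forced extremities. A case-by-case analysis according to the residue of $v\pmod 5$ yields a contradiction precisely when $d\equiv 2,4\pmod 5$ with $d\ge 7$ and $d\ge 9$ respectively, which is exactly the claim. This combinatorial puzzle is the most delicate part of the proof; the other ingredients are either direct extensions of already-proved lemmas or explicit constructions.
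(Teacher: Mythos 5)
Your treatment of $b\le 2$ (Remark \ref{a+b+c}) and of $b\ge 4$ (prepend $R\{1\}$ and invoke Lemma \ref{(0,b,0,d)}, then clean up the finitely many remaining pairs $(b,d)$ by the composition $(*)$ and direct constructions) is exactly the paper's argument, as is the positive half of the case $b=3$.

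The gap is in the non-existence argument for $b=3$, $d\equiv 2,4\pmod 5$ --- precisely the point where the paper gives no detail beyond ``it is a routine to check''. Your block decomposition is correct: all $v-5$ length-$5$ edges are forced, each residue class modulo $5$ must be traversed as a contiguous monotone block, and the four remaining edges glue extremities of distinct blocks, alternating top--top and bottom--bottom once $v$ is large. But the puzzle you extract --- a Hamiltonian path on the five residues with one $\pm1$-step and three $\pm2$-steps compatible with the extremities --- \emph{does} have solutions in both residue classes, so no contradiction follows. For $d=7$, $v=12$, the tops of the residue classes $0,1,2,3,4$ are $10,11,7,8,9$, and the block order $3,0,1,4,2$ with gluing edges $(8,10)$, $(0,1)$, $(11,9)$, $(4,2)$ yields the Hamiltonian path
$$[3,8,10,5,0,1,6,11,9,4,2,7],$$
whose multiset of consecutive differences is exactly $\{1,2^3,5^7\}$; likewise $[4,9,11,6,1,3,8,13,12,7,2,0,5,10]$ realizes $\{1,2^3,5^9\}$. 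What rules these out in the paper's framework is the standing convention that every linear realization starts at the vertex $0$ (this is what the composition $(*)$ needs, and it is why Section 4 must supply \emph{cyclic} realizations $c\{1,2^3,5^{(5k+7)}\}$ and $c\{1,2^3,5^{(5k+9)}\}$ separately). Under that convention the block of residue $0$ comes first and is entered at its bottom, the gluing pattern is forced to be TT, BB, TT, BB starting from the top of that block, and the resulting case check genuinely fails for $d\equiv 2\pmod 5$, $d\ge 7$ and $d\equiv 4\pmod5$, $d\ge 9$. Your write-up never invokes the start-at-$0$ constraint, so as stated the key negative claim is false and the ``only if'' direction is not established.
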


\begin{proof}
For $b=0,1,2$ the non existence follows from Remark \ref{a+b+c}.
It is not hard to construct a linear realization of the lists
$\{1,2^3,5\}, \{1,2^4,5\}$ and $\{1,2^5,5\}$;
also, for $d=1$ and $b\geq 6$, see
\cite{PP}.
Furthermore, we have directly constructed a linear realization of
$\{1,2^b,5^2\}$ for $3\leq b\leq 8$.
For $b\geq 9$ we have $r\{1,2^b,5^2\}=R\{2^6,5\}+r\{1,2^{(b-6)},5\}$.
So, we may assume
$d\geq 3$, anyway in some cases our constructions include
also $d=1,2$.\\
It is a routine to check that a $r\{1,2^3,5^{(5k+7)}\}$ and a $r\{1,2^3,5^{(5k+9)}\}$
cannot exist. On the other hand
a $r\{1,2^3,5^{4}\}$ can be easily obtained.
Now, in view of Remark \ref{RemA>a} and Lemma \ref{(0,b,0,d)} it is sufficient to construct
a linear realization of $L$ in the following cases:
$d=5k+3$ and $3\leq b\leq6$;
$d=5k+4$ and $4\leq b\leq9$;
$d=5k+5$ and $3\leq b\leq7$;
$d=5k+6$ and $b=3,4$;
$d=5k+7$ and $4\leq b\leq6$.
If either $b=3$  and $d=5k,5k+1,5k+3$ or
$b=4$ and $d= 5k,5k+1,5k+2$ or $b=5,6$ and $d=5k,5k+2$ look at \cite{PP}.
Furthermore,
$$\left.\begin{array}{l}
r\{1,2^b,5^{(5k+3)}\}  = R\{2^4,5^{(5k+3)}\} +
r\{1,2^{(b-4)}\}\quad \textrm{if } b= 4,5,6,\\
r\{1,2^b,5^{(5k+4)}\}  = R\{2^4,5^{(5k+4)}\} +
r\{1,2^{(b-4)}\}\quad \textrm{if } b =4,\ldots, 9,\\
r\{1,2^7,5^{(5k+5)}\} = R\{2^4,5^{(5k+4)}\} + r\{1,2^3,5\}.\\
\end{array}\right.$$
The existence follows from Table \ref{TablePerfect} and Theorem \ref{KDF} (v).
\end{proof}

\subsection{Case $a\geq2$}
\begin{lem}\label{a=2}
The multiset
$L=\{1^2,2^b,3^c,5^d \}$ has a linear realization if $(b,c,d)$ is any of the following:
\begin{enumerate}
\item[\rm{(i)}] $(0,2,d)$, $(1,1,d)$, $(2,0,d)$ for all $d\geq 0$;
\item[\rm{(ii)}] $(0,c,1)$, $(0,c,2)$ if and only if $c\geq 2$;
\item[\rm{(iii)}] $(0,3,5k)$, $(0,3,5k+1)$, $(0,3,5k+3)$,
$(1,2,5k+2)$, $(1,2,5k+4)$,
$(2,1,5k+3)$, $(3,0,5k+2)$,
$(3,0,5k+4)$, for all $k\geq 0$.
\end{enumerate}
\end{lem}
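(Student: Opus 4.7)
The plan is to prove each case by combining the composition $(*)$ with direct constructions, following the same strategy used for Theorem \ref{lin a=0} and Propositions \ref{(1,0,c,d)}--\ref{(1,b,0,d)}. Case (i) is the delicate one: in all three subfamilies the natural reduction via Remark \ref{RemA>a} to an $a=1$ list fails, because $\{1,3^2,5^d\}$, $\{1,2,3,5^d\}$, and $\{1,2^2,5^d\}$ all have $a+b+c=3$ and thus admit no linear realization for $d\geq 1$ by Remark \ref{a+b+c}. So for each of these three families I would provide direct base realizations for the small values of $d$ (including $d=0$), and for $d$ large build the realization through $(*)$ by taking $RL_1$ to be a perfect realization from Table \ref{TablePerfect} (for instance one containing the $1^2$ and/or $3^2$ together with a compatible block of fives) and $rL_2$ a short residual from Table \ref{TableLinear}; the analysis would be stratified by $d \bmod 5$.

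For case (ii), where the list is $\{1^2,3^c,5^d\}$ with $d\in\{1,2\}$, Remark \ref{RemA>a} applied to Proposition \ref{(1,0,c,d)} handles the values of $c$ for which the $a=1$ reduct is realizable, namely $c\in\{4,5,6,7\}$ when $d=1$ and $c\in\{3,4,5,6\}$ when $d=2$. For the remaining small cases $c=2,3$ one writes down a direct realization, and for the large-$c$ tail one applies $(*)$ with a perfect $R\{3^6\}$-type block supplied by Theorem \ref{KDF}. The non-existence for $c\leq 1$ is immediate from Remark \ref{a+b+c}, giving the ``only if'' direction.

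Case (iii) collects exactly those parameterized triples that fall through every composition route, because their reducts with $a=1$ lie in the excluded lists of Propositions \ref{(1,0,c,d)}--\ref{(1,b,0,d)} or Theorem \ref{lin a=0}. For each of them a direct linear realization must be exhibited, parameterized by the congruence class of the free index (usually $d$) modulo $5$, and these explicit constructions would be deferred to the electronic appendix \cite{PP}. The main obstacle is case (i): with no inductive handle on the two $1$'s, one must engineer a uniform composition template that covers every residue of $d$ modulo $5$ without leaving gaps, and then verify that the collection of direct constructions in \cite{PP} precisely matches the exceptional parameter tuples listed in (iii)---this bookkeeping is the step most likely to hide errors.
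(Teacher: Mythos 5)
There is a genuine gap in your treatment of case (i). You correctly observe that Remark \ref{RemA>a} gives no foothold because the $a=1$ reducts violate Remark \ref{a+b+c}, but your fallback --- handling large $d$ via the composition $(*)$ with a perfect block from Table \ref{TablePerfect} and a short residual from Table \ref{TableLinear} --- cannot work either. In each of the three families of (i) the total number of elements from $\{1,2,3\}$ is exactly $4$. If you split $L=L_1\cup L_2$ and both pieces contain a $5$, then by Remark \ref{a+b+c} each piece needs at least four elements from $\{1,2,3\}$, i.e.\ at least eight in total; if only one piece contains the $5$'s, that piece needs all four small elements, so the other piece is empty (note that $\{5^e\}$ itself has no linear realization). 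Hence no nontrivial decomposition into two linearly realizable lists exists, and case (i) forces a fully direct, parameterized construction for every residue of $d$ modulo $5$. This is exactly what the paper does: its proof of (i) (and of (iii)) consists of the single sentence ``See \cite{PP}'', the electronic appendix of explicit realizations.

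Your case (ii) is essentially right in outline and matches the paper for $d=2$, where the tail $c\geq 6$ is handled by $r\{1^2,3^c,5^2\}=R\{3^6,5^2\}+r\{1^2,3^{(c-6)}\}$; but note the perfect block must be $R\{3^6,5^2\}$ from Table \ref{TablePerfect} (the $k=0$ instance of $(0,0,6,5k+2)$), not an ``$R\{3^6\}$-type block supplied by Theorem \ref{KDF}'' --- the list $\{3^6\}$ has a single symbol different from $1$ and admits no linear realization, and Theorem \ref{KDF}~(iv) is instead what supplies the residual $r\{1^2,3^{(c-6)}\}$. For $d=1$ no analogous perfect $\{3^j,5\}$ block is available, and the paper again defers $c\geq 3$ to \cite{PP}. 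Your ``only if'' direction for (ii) via Remark \ref{a+b+c} is correct.
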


\begin{proof}
(i), (iii) See \cite{PP}.
(ii) Let $d=1$. It is easy to obtain a $r\{1^2,3^2,5\}$; for $c\geq 3$ see
\cite{PP}. Suppose now $d=2$. If $c=2,\ldots,5$, to obtain the result is a simple exercise.
For $c\geq 6$ it results
$r\{1^2,3^c,5^2\}=R\{3^6,5^2\}+r\{1^2,3^{(c-6)}\}$
so the existence follows from Table \ref{TablePerfect} and Theorem \ref{KDF} (iv).
\end{proof}

\begin{lem}\label{a=3,4,5}
The multiset $L=\{1^a,2^b,3^c,5^d\}$ has a linear realization
if $(a,b,c,d)$ is any of the following:
\begin{enumerate}
\item[\rm{(i)}] $(3,0,1,d)$ for all $d\geq 0$;
\item[\rm{(ii)}] $(3,1,0,d)$ if, and only if, $d\neq 5k+8$;
\item[\rm{(iii)}] $(4,0,0,d)$ if, and only if, $d\neq 5k+8$;
\item[\rm{(iv)}] $(4,1,0,5k+3)$, $(5,0,0,5k+3)$ for all $k\geq 0$.
\end{enumerate}
\end{lem}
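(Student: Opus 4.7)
The plan is to prove each of the four items separately, following the template of Step 4 in Section 2 and mirroring the structure used for Lemma \ref{a=2}. For every case I would split according to the residue of $d$ modulo $5$, dispose of a short list of small base values of $d$ by exhibiting explicit Hamiltonian paths (which, as elsewhere in the paper, would be deferred to \cite{PP}), and then cover the remaining large-$d$ tail by the concatenation formula $(*)$ applied to an appropriate perfect realization drawn from Table \ref{TablePerfect} together with a realization already established in earlier results.

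For part (i), with target list $L=\{1^3,3,5^d\}$, neither $\{1,3,5^d\}$ nor $\{3,5^d\}$ is linearly realizable in the relevant ranges (by Proposition \ref{(1,0,c,d)} and Remark \ref{a+b+c}), so Remark \ref{RemA>a} cannot be used to bootstrap from previous items, and direct constructions in \cite{PP} organized by $d\bmod 5$ will carry most of the weight. Part (iv), handling $(4,1,0,5k+3)$ and $(5,0,0,5k+3)$, should on the contrary follow cleanly by Remark \ref{RemA>a} applied to the corresponding $(3,1,0,5k+3)$ and $(4,0,0,5k+3)$ cases of (ii) and (iii) — note that these are realizable since $5k+3\neq 5k'+8$ for any $k,k'\geq 0$. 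For the existence portions of (ii), $L=\{1^3,2,5^d\}$, and (iii), $L=\{1^4,5^d\}$, the plan is to attach perfect realizations from Table \ref{TablePerfect} of the form $R\{2^4,5^{5k+3}\}$, $R\{2^6,5^{5k+1}\}$ or $R\{2^8,5^{5k+2}\}$ (for (ii)) and $R\{1^a\}$ concatenated with $r\{1,2^b,5^d\}$ from Proposition \ref{(1,b,0,d)} (for (iii)); again residue classes of $d$ modulo $5$ are treated separately, and the short list of small $d$ in each class is handled directly in \cite{PP}.

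The main obstacle I expect is the non-existence direction in (ii) and (iii), which asserts that no linear realization exists when $d=5k+8$. Here $v=5k+13$ is coprime to $5$, so condition (\ref{B}) gives no obstruction whatsoever and one cannot merely quote Proposition \ref{HR iff B}. The argument therefore must be intrinsic to linear (not cyclic) paths: the natural approach is to run a direct combinatorial case analysis on the positions of the lone $2$-edge (in (ii)) or on the grouping of vertices into residue classes modulo $5$ along the path, showing that the three (or four) unit edges cannot simultaneously account for all necessary residue transitions while leaving a valid Hamiltonian path of length-$5$ jumps on the rest. This type of exclusion, together with a small computer verification for the base value $k=0$ and a parity/pigeonhole extension to arbitrary $k\geq 0$, is the step I would expect to be the most delicate. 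All remaining existence pieces reduce to routine applications of $(*)$ once the relevant perfect realizations and the previously proved propositions are in place.
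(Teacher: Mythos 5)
Your overall architecture (explicit small cases deferred to \cite{PP}, concatenation $(*)$ for the tails, a hand/computer check for the non-existence of the $d=5k+8$ exceptions) matches the spirit of the paper, which in fact gives almost no detail here: it simply states that realizations of $\{1^3,2,5^{(5k+8)}\}$ and $\{1^4,5^{(5k+8)}\}$ can be checked not to exist, exhibits $r\{1^3,2,5^3\}$ and $r\{1^4,5^3\}$, and refers everything else to \cite{PP}. However, your treatment of item (iv) contains a genuine error. You claim $(4,1,0,5k+3)$ and $(5,0,0,5k+3)$ follow from items (ii) and (iii) via Remark \ref{RemA>a} ``since $5k+3\neq 5k'+8$ for any $k,k'\geq 0$''. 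This is false: $5k+3=5(k-1)+8$ for every $k\geq 1$, so for all $k\geq 1$ the lists $\{1^3,2,5^{(5k+3)}\}$ and $\{1^4,5^{(5k+3)}\}$ are exactly the non-realizable exceptions of (ii) and (iii), and the bootstrap is unavailable. Item (iv) exists in the lemma precisely to supply direct constructions for this residue class (it is what lets the proof of Theorem \ref{MainResult} close the cases $a=4,5$ at $d\equiv 3\pmod 5$); your plan leaves it unproved except at $k=0$.

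A second, smaller defect: the building blocks you propose for the existence halves of (ii) and (iii) cannot work, because $(*)$ yields a realization of the \emph{union} of the two lists. Prepending $R\{2^4,5^{(5k+3)}\}$, $R\{2^6,5^{(5k+1)}\}$ or $R\{2^8,5^{(5k+2)}\}$ forces at least four $2$'s into the resulting list, whereas $\{1^3,2,5^d\}$ has exactly one $2$; likewise $R\{1^a\}+r\{1,2^b,5^d\}$ with $b\geq 3$ (the only range where Proposition \ref{(1,b,0,d)} applies) cannot produce $\{1^4,5^d\}$, which has no $2$ at all. Since the only shorter sublists one could grow from ($\{1,5^d\}$, $\{1^2,5^d\}$, $\{1^2,2,5^d\}$, $\{1,2,5^d\}$) are all non-realizable by Remark \ref{a+b+c} and Theorem \ref{MainResult}, these cases genuinely require direct constructions, which is what the paper (via \cite{PP}) does.
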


\begin{proof}
One can check that a linear realization of
$\{1^3,2,5^{(5k+8)}\}$ and of $\{1^4,5^{(5k+8)}\}$
does not exist, while a $r\{1^3,2,5^3\}$
and a $r\{1^4,5^3\}$
can be easily obtained.
All other cases can be found in \cite{PP}.
\end{proof}

Now we can prove Theorem \ref{MainResult}.

\begin{proof}[Proof of Theorem \emph{\ref{MainResult}}]
 We split the proof into some subcases, depending on the value of $a$.

1) Let $a=1$. Observe that by Remark \ref{RemA>a}, it suffices to consider the lists
$\{1,2^b,3^c,5^d\}$, where $(b,c,d)$ is either
one of the
exceptions listed in Theorem \ref{lin a=0} or it is not covered by the theorem, i.e. with
$b\leq 2$ or $c=0$.
Propositions \ref{(1,0,c,d)}, \ref{(1,1,c,d)} and \ref{(1,2,c,d)} deal with the cases
$b=0,1,2$, respectively.
The case  $L=\{1,2^b,5^d\}$ is considered in Proposition \ref{(1,b,0,d)}.
Also, for $b\geq 8$, we have $r\{1,2^b,3,5\}=R\{2^4,3\}+\{1,2^{(b-4)},5\}$, so the result
follows from Theorem \ref{KDF} (iii) and
Proposition \ref{(1,b,0,d)} (the case $b=7$ can be solved by a direct construction).
Finally, a linear realization of $\{1
,2^3,3,5^{(5k+3)}\}$
is described in \cite{PP}.

2) Let $a=2$. In view of Remark \ref{RemA>a}, we are left to consider the
exceptions of the statement for $a=1$.
By Remark \ref{a+b+c} and Lemma \ref{a=2}, $L$ is linearly realizable if and only if,
$L\neq \{1^2,5^d\},
\{1^2,3,5^d\}, \{1^2,2,5^d\}$.

3) Let $a=3$. In view of Remark \ref{RemA>a}, we have to consider
only the lists $\{1^3,5^d\}$, $\{1^3,3^1,5^d\}$, $\{1^3,2^1,5^d\}$.
By Remark \ref{a+b+c} and Lemma \ref{a=3,4,5} (i) and (ii), $L$ is linearly
realizable if and
only if $L\neq \{1^3,5^d\}, \{1^3,2,5^{(5k+8)}\}$.

4) Let $a=4$. In view of Remark \ref{RemA>a}, we have to consider
only the lists $\{1^4,5^d\}$, $\{1^4,2,5^{(5k+8)}\}$.
By Lemma \ref{a=3,4,5} (iii) and (iv) $L$ is linearly
realizable if and
only if $L\neq \{1^4,5^{(5k+8)}\}$.

5) Let $a=5$. By the previous analysis, we have to consider only the list
$L=\{1^5,5^{(5k+8)}\}$.
However, $L$ has a linear realization by Lemma \ref{a=3,4,5} (iv).

Now Remark \ref{RemA>a} implies that for $a\geq 6$ the list $L$ has always a linear
realization.
 \end{proof}

\section{Proof of Theorem \ref{Bur-Rosa} }

Let $L$ be a list whose integers are taken from $\{1,2,3,5\}$.
By \cite[Theorem 4.1]{HR} we may assume that the list $L$ contains at least three distinct
elements and
by \cite[Theorem 2.5]{CDF} we may also assume that $L$ contains $5$ at least
once. Clearly this implies that
$|L|\geq 9$.
One can check that if $L$ is such a list, then  $\BHR(L)$
reduces to
``There exists a Hamiltonian path $H$ in $K_v$ such that $\ell(H)=L=\{1^a,2^b,3^c,5^d\}$
if and only if $5$ does not divide $v$ when $a+b+c<4$.''\\
So, we prove that in all these cases there exists a cyclic realization $cL$ of $L$.
Let $L=\{2^b,3^c,5^d\}$ with $b,c,d\geq 1$.
If $b=1$ by Remark \ref{cyclin}  it suffices to find a
cyclic realization of $L$ in all the exceptional cases
not covered by Proposition \ref{(0,1,c,d)}. We list them below :
\begin{footnotesize}
$$\left.\begin{array}{l}
c\{2,3^{(3t+7)},5\}= [0,\ldots,3t+9, 1, 3t+8,\ldots, 2, 3t+7, \ldots,4],\\
c\{2,3^{(3t+8)},5\}=  [0, 3t+8, 3t+6,\ldots,3, 3t+9, 1,\ldots,3t+10, 2, \ldots,3t+5 ],\\
c\{2,3^{(3t+9)},5\}= [0,\ldots,3t+9, 3t+4,\ldots,1, 3t+10, 3t+7, 3t+5, \ldots,2, 3t+11,
3t+8 ],\\[4pt]

c\{2,3^{(3t+6)},5^2\}= [0,\ldots,3t+3, 3t+8, \ldots,2, 3t+9, 3t+7, \ldots, 1, 3t+6 ],\\
c\{2,3^{(3t+7)},5^2\}= [0,\ldots,3t+9, 1, 4, 3t+10, 3t+8, \ldots, 2, 7,\ldots,3t+7 ],\\
c\{2,3^{(3t+8)},5^2\}= [0,\ldots,3t+9, 3t+11, 2,\ldots,3t+5, 3t+10, \ldots, 1, 3t+8
],\\[4pt]
c\{2,3^{(3t+5)},5^3\}= [0, 5,\ldots,3t+8, 3,\ldots,3t+6, 1,\ldots,3t+7, 3t+9, 2 ],\\
c\{2,3^{(3t+6)},5^3\}= [0, 3t+8, 3t+6, \ldots,3, 3t+9, 3t+4,\ldots,1, 3t+7, 3t+10,
2,\ldots,3t+ 5 ],\\
c\{2,3^{(3t+7)},5^3\}= [0, \ldots,3t+9, 2,\ldots,3t+2, 3t+7, 3t+10, 3t+5, 3t+8, 3t+11, 1,
\ldots,3t+4],\\[4pt]
%
c\{2,3^{(3t+4)},5^4\}=[ 0, 5,\ldots,3t+8, 3, 1, 6, 9,\ldots,3t+9, 2, 3t+7,\ldots, 4 ],\\
c\{2,3^{(3t+5)},5^5\}= [0, 3,\ldots,3t+3, 3t+8, \ldots,5, 10,\ldots,3t+10, 1, 4, 7, 2,
3t+9, 3t+11, 3t+6 ],\\
c\{2,3^{(3t+4)},5^6\}= [0, 3t+9,\ldots,9, 4, 7, 2, 5, 10,\ldots,3t+10, 3, 8,\ldots,3t+11,
1, 6 ].
\end{array}\right.$$
\end{footnotesize}

A linear realization for the other congruence classes of $c$ modulo $3$ when $d=4,5,6$ can be found in the proof of
Proposition \ref{(0,1,c,d)}, so we do not need to find a cyclic realization in these cases.
\begin{footnotesize}
$$\left.\begin{array}{rl}
c\{2,3,5^{(5k+8)}\}=\hspace{-0.3cm}&[0,\ldots,5k+10,4,\ldots,5k+9,5k+6,\ldots,1,3,\ldots,5k+8,2,\ldots,
5k+7],\\
c\{2,3,5^{(5k+9)}\}=\hspace{-0.3cm}&[0,5k+7,\ldots,2,5k+9,\ldots,4,5k+11,\ldots,1,5k+10,\ldots,5,3,\ldots,
5k+8],\\
c\{2,3,5^{(5k+10)}\}=\hspace{-0.3cm}&[0,\ldots,5k+5,5k+8,5k+10,2,\ldots,5k+12,4,\ldots,5k+9,1,\ldots,5k+11
,\\
&3,\ldots,5k+3],\\
c\{2,3,5^{(5k+11)}\}=\hspace{-0.3cm}&[0,5k+9,5k+12,\ldots,2,5k+11,\ldots,1,5k+10,\ldots,5,
3,\ldots,5k+13,\\
&4,\ldots,5k+4], \\[4pt]
%
c\{2,3^2,5^{(5k+7)}\}=\hspace{-0.3cm}&[0,\ldots,5k+5,5k+8,5k+10,4,\ldots,5k+9,3,\ldots,5k+3,5k+6,\ldots,1,\\
& 5k+7,\ldots,2], \\
c\{2,3^2,5^{(5k+8)}\}
=\hspace{-0.3cm}&[0,\ldots,5k+5,5k+8,\ldots,3,1,\ldots,5k+11,4,\ldots,5k+9,2,\ldots,
5k+7,5k+10],\\

c\{2,3^2,5^{(5k+9)}\}=\hspace{-0.3cm}&[0,\ldots,5k+5,5k+8,5k+10,2,\ldots,5k+12,4,\ldots,5k+9,1,\ldots,5k+6,\\
& 5k+3,\ldots,3, 5k+11 ] , \\
c\{2,3^2,5^{(5k+10)}\}
=\hspace{-0.3cm}&[0,\ldots,5k+5,5k+7,\ldots,2,5k+11,\ldots,1,5k+10,5k+13,\ldots,3,5k+12,\\
&5k+9,\ldots,4],\\[4pt]

c\{2,3^3, 5^{(5k+6)} \}
=\hspace{-0.3cm}&[0,\ldots,5k+5,5k+8,5k+10,2,\ldots,5k+7,1,\ldots,5k+6,5k+3,\ldots,3,\\
&5k+9,\ldots,4],\\
c\{ 2,3^3, 5^{(5k+7)} \}
=\hspace{-0.3cm}&[0,\ldots,5k+5,5k+8,1,\ldots,5k+6,5k+3,\ldots,3,5k+10,5k+7,\ldots,2,\\
&5k+9,5k+11,4,\ldots,5k+4],\\
c\{2,3^3,5^{(5k+8)}\}=\hspace{-0.3cm}&[0,\ldots,5k+5,5k+8,5k+10,2,\ldots,5k+12,5k+9,\ldots,4,1,\ldots,
5k+11,\\
&3,\ldots,5k+3 ] , \\
c\{2,3^3,5^{(5k+9)}\}=\hspace{-0.3cm}&[0,\ldots,5k+10,1,4,\ldots,5k+9,5k+11,\ldots,6,3,\ldots,5k+13,2,
\ldots,5k+12],\\
c\{2,3^3,5^{(5k+10)}\}=\hspace{-0.3cm}&[0,5k+10,\ldots,5,2,\ldots,5k+12,5k+14,\ldots,4,1,\ldots,5k+11,5k+8,\\
&5k+13,3,\ldots,5k+3 ],\\[4pt]

c\{2,3^4,5^{(5k+5)}\}=\hspace{-0.3cm}& [0, 3,\ldots,5k+8, 2, 5,\ldots,5k+10, 1,
5k+7,\ldots,7,4,
\ldots,5k+9, 5k+6,\ldots,6 ] .
\end{array}\right.$$
\end{footnotesize}

If $b=2$ by Proposition \ref{(0,2,c,d)} it suffices to consider the following
cyclic realizations:
\begin{footnotesize}
$$\left.\begin{array}{rl}
c\{2^2,3^{(3t+6)},5\}=\hspace{-0.3cm}& [0,2,\ldots,3t+5,3t+7,\ldots,1,3t+8,3,\ldots,3t+9]
,\\
c\{2^2,3^{(3t+7)},5\}=\hspace{-0.3cm}&
[0,3,5,\ldots,3t+8,2,3t+10,\ldots,4,6,\ldots,3t+9,1],\\
c\{2^2,3^{(3t+8)},5\}=\hspace{-0.3cm}&
[0,3t+9,\ldots,6,4,\ldots,3t+10,1,3,8,\ldots,3t+11,2,5],\\[4pt]

c\{2^2,3^{(3t+5)},5^2\}=\hspace{-0.3cm}&[0,5,2,4,\ldots,3t+7,3t+9,\ldots,3,8,\ldots,3t+8,1
] ,\\
c\{2^2,3^{(3t+6)},5^2\}=\hspace{-0.3cm}&
[0,3,3t+9,\ldots,6,1,\ldots,3t+7,3t+5,3t+8,3t+10,2,\ldots,3t+2],\\
c\{2^2,3^{(3t+7)},5^2\}=\hspace{-0.3cm}&
[0,3t+9,\ldots,6,4,1,3t+10,3,5,\ldots,3t+11,2,7,\ldots,3t+7],
\end{array}\right.$$
\end{footnotesize}

\begin{footnotesize}
$$\left.\begin{array}{rl}
c\{2^2,3,5^{(5k+7)}\}=\hspace{-0.3cm}&[0,5k+9,5k+6,\ldots,1,5k+7,\ldots,2,5k+8,\ldots,3,5,
\ldots,5k+10,\\
&4,\ldots,5k+4],\\
c\{2^2,3,5^{(5k+8)}\}=\hspace{-0.3cm}&[0,\ldots,5k+5, 5k+7,\ldots, 2,5k+ 9,\ldots, 4, 5k+11, 5k+6,
5k+8,\ldots, 3,\\
& 5k+10,1,\ldots,5k+1 ],\\
c\{2^2,3,5^{(5k+9)}\}=\hspace{-0.3cm}&[0, 5k+8,\ldots, 3, 5k+11, \ldots, 1, 5k+9, 5k+12, \ldots,7,
5,\ldots,5k+10,2,\\
&  4,\ldots,5k+4 ] ,\\
c\{2^2,3,5^{(5k+10)}\}=\hspace{-0.3cm}&[0, 5k+9,\ldots, 4, 5k+13, \ldots,8, 10,\ldots,5k+10, 1,\ldots,5k+
11, 2, 5, \\
&7,\ldots,5k+12,3 ],\\[4pt]

c\{2^2,3^2,5^{(5k+7)}\}=\hspace{-0.3cm}&
[0,5,2,4,\ldots,5k+9,5k+11,\ldots,1,5k+8,\ldots,3,5k+10,\ldots,10,\\
&7,\ldots,5k+7],\\
c\{2^2,3^2,5^{(5k+8)}\}=\hspace{-0.3cm}&[0,\ldots,5k+5, 5k+7,\ldots, 2, 5k+10, 5k+12, 4,\ldots,5k+ 9,
1,\ldots,5k+ 6,\\
& 5k+3,\ldots,3, 5k+11, 5k+8 ] ,\\
c\{2^2,3^2,5^{(5k+9)}\}=\hspace{-0.3cm}&
[0,\ldots,5k+5,5k+3,\ldots,3,5k+12,\ldots,2,5k+11,5k+8,5k+6,\ldots,1,\\
&5k+10,5k+13,4,\ldots,5k+9],\\
c\{2^2,3^3,5^{(5k+8)}\}= \hspace{-0.3cm}&
[0,\ldots,5k+10,5k+12,5k+9,\ldots,4,1,5k+13,\ldots,3,6,\ldots,5k+11,\\
&2,\ldots,5k+7].
\end{array}\right.$$
\end{footnotesize}

If $b\geq 3$, by Remark \ref{cyclin} it suffices to find cyclic realizations $cL$ for the
exceptions of Theorem \ref{lin a=0}. For all $v\geq 0$ we have
\begin{footnotesize}
$$\left.\begin{array}{l}
c\{2^{(2v+7)},3,5\}=[0,\ldots,2v+4,2v+9,\ldots,1,2v+8,
2v+6],\\
c\{2^{(2v+8)},3,5\}=[0,\ldots,2v+10,1,2v+9,3,\ldots,2v+7].
\end{array}\right.$$
\end{footnotesize}
Also, we obtain
\begin{footnotesize}
$$c\{2^3,3,5^{(5k+8)}\}=[0,5k+8,\ldots,3,5,\ldots,5k+10,
5k+12,\ldots,2,4,\ldots,5k+9 ,1,5k+11,\ldots,6].$$
\end{footnotesize}
If $a\geq 1$, by Remark \ref{cyclin} it suffices to find cyclic realizations for the exceptions
of
Theorem \ref{MainResult}.
Let $L=\{1^a,3^c,5^d\}$ with $a,c,d\geq 1$.
For all $t\geq 0$ we have
\begin{footnotesize}
$$\left.\begin{array}{l}
c\{1,3^{(3t+7)},5\}=[0,\ldots,3t+6,3t+7,\ldots,4,3t+9,2,\ldots,3t+8,1],\\
c\{1,3^{(3t+8)},5\}=[0,3,6,5,\ldots,3t+8,2,3t+10,\ldots,1,3t+9,\ldots,9],\\
c\{1,3^{(3t+9)},5\}=[0,3t+9,\ldots,3,2,\ldots,3t+11,4,1,3t+10,\ldots,7],\\[4pt]

c\{1,3^{(3t+6)},5^2\}=[0,\ldots,3t+6,1,\ldots,3t+7,3t+2,\ldots,2,3t+9,3t+8,3t+5],\\
c\{1,3^{(3t+7)},5^2\}=[0,\ldots,3t+9,1,4,3t+10,\ldots,7,8,\ldots,3t+8,2,5],\\
c\{1,3^{(3t+8)},5^2\}=[0,3t+9,\ldots,3,3t+10,1,4,5,\ldots,3t+11,2,7,\ldots,3t+7].
\end{array}\right. $$
\end{footnotesize}

Furthermore, we have
\begin{footnotesize}
$$\left.\begin{array}{rl}
c\{1,3,5^{(5k+8)}\}=\hspace{-0.3cm}&[0,\ldots,5k+10,4,\ldots,5k+9,5k+6,\ldots,1,5k+7,\ldots,2
,3,\ldots,5k+8],\\
c\{1,3,5^{(5k+9)}\}=\hspace{-0.3cm}&[0,5k+7,\ldots,2,5k+9,\ldots,4,3,5k+10,\ldots,5,8,\ldots,5k+8
,1,\ldots,5k+11],\\
c\{1,3,5^{(5k+10)}\}=\hspace{-0.3cm}&[0,\ldots,5k+5,5k+8,\ldots,3,5k+11,\ldots,1,5k+9,5k+10,2,
\ldots,5k+12,\\
&4,\ldots,5k+4],\\
c\{1,3,5^{(5k+11)}\}=\hspace{-0.3cm}&[0,\ldots,5k+5,5k+8,5k+13,4,\ldots,5k+9,5k+10,1,
\ldots,5k+11,\\
&2,\ldots,5k+12,3,\ldots,5k+3],
\\[4pt]

c\{1,3^2,5^{(5k+7)}\}=\hspace{-0.3cm}&[0,\ldots,5k+10,4,3,\ldots,5k+8,2,\ldots,5k+7,1,5k+9,\ldots,9
,6,\ldots,5k+6],\\
c\{1,3^2,5^{(5k+8)}\}=\hspace{-0.3cm}&[0,5k+7,\ldots,7,4,\ldots,5k+9,2,1,\ldots,5k+11,5k+8
,\ldots,3,5k+10,\ldots,5],\\
c\{1,3^2,5^{(5k+9)}\}=\hspace{-0.3cm}&[0,5k+8,\ldots,8,5,\ldots,5k+10,2,3,5k+11,\ldots,1,5k+9,
\ldots,4,\\
&7,\ldots,5k+12],\\
c\{1,3^2,5^{(5k+10)}\}=\hspace{-0.3cm}&[0,\ldots,5k+10,1,\ldots,5k+11,5k+8,5k+9,\ldots,4,5k+13,2,
\ldots,5k+12,\\
&3,\ldots,5k+3],
 \end{array}\right.$$
 $$\left.\begin{array}{rl}
c\{1,3^3,5^{(5k+5)}\}=\hspace{-0.3cm}&[0,\ldots,5k+5,5k+8,\ldots,3,2,\ldots,5k+7,5k+4,
\ldots,4,5k+9,\\
&5k+6,\ldots,1],\\
c\{1,3^3,5^{(5k+6)}\}=\hspace{-0.3cm}& [0,\ldots,5k+5,5k+6,\ldots,6,3,\ldots,5k+8,2,\ldots,5k+7,
5k+10,\\
&4,\ldots,5k+9,1],\\
c\{1,3^3,5^{(5k+8)}\}=\hspace{-0.3cm}&[0,3,\ldots,5k+8,5k+5,\ldots,5,2,5k+10,5k+11,\ldots,1,5k+9,
\ldots,4,\\
&5k+12,\ldots,7],\\[4pt]

c\{1^2,3,5^{(5k+7)}\}=\hspace{-0.3cm} &[0,\ldots,5k+10,4,\ldots,5k+9,5k+6,
\ldots,1,5k+7,5k+8,\ldots,3,2,\ldots,5k+2],\\
c\{1^2,3,5^{(5k+8)}\}=\hspace{-0.3cm} &[0,5k+7,\ldots,2,1,5k+8,\ldots,3,6,\ldots,5k+11,4,\ldots,5k+9
,5k+10,\ldots,5],\\
c\{1^2,3,5^{(5k+9)}\}=\hspace{-0.3cm} &[0,\ldots,5k+5,5k+8,\ldots,3,2,5k+10,5k+11,\ldots,1,5k+9,
\ldots,4,\\
&5k+12,\ldots,7],\\
c\{1^2,3,5^{(5k+10)}\}=\hspace{-0.3cm} &[0,\ldots,5k+10,1,\ldots,5k+6,5k+9,\ldots,4,5k+13,\ldots,3,2
,5k+11,\\
&5k+12,\ldots,7].
\end{array}\right.$$
\end{footnotesize}

Let $L=\{1^a,2^b,5^d\}$ with $a,b,d\geq 1$.
We have
\begin{footnotesize}
$$\left.\begin{array}{rl}
c\{1,2,5^{(5k+8)}\}=\hspace{-0.3cm}&[0,\ldots,5k+10,4,\ldots,5k+4,5k+6,\ldots,1,5k+7,\ldots,2,5k+8,
5k+9,\\
&3,\ldots,5k+3],\\
c\{1,2,5^{(5k+9)}\}=\hspace{-0.3cm}&[0,\ldots,5k+10,3,2,\ldots,5k+7,5k+9,\ldots,4,5k+11,\ldots,1,
5k+8,\ldots,8],\\
c\{1,2,5^{(5k+10)}\}=\hspace{-0.3cm}&[0,\ldots,5k+10,5k+8,\ldots,3,5k+11,\ldots,1,2,\ldots,5k+12,4,
\ldots,5k+9],\\
c\{1,2,5^{(5k+11)}\}=\hspace{-0.3cm}&[0,\ldots,5k+10,1,\ldots,5k+11,5k+9,\ldots,4,5k+13,\ldots,3,2,
\ldots,5k+12],\\ [4pt]
\end{array}\right.$$
\end{footnotesize}

\begin{footnotesize}
$$\left.\begin{array}{rl}
c\{1,2^2,5^{(5k+7)}\}=\hspace{-0.3cm}&[0,5k+6,\ldots,1,5k+7,\ldots,7,5,\ldots,5k+10,4,2,5k+8,\ldots,8,
\\
&9,\ldots,5k+9,3],\\
c\{1,2^2,5^{(5k+8)}\}=\hspace{-0.3cm}&[0,5k+7,\ldots,7,5,\ldots,5k+10,3,\ldots,5k+8,5k+6,5k+11,4,
\ldots,5k+9,\\
&2,1,\ldots,5k+1],\\
c\{1,2^2,5^{(5k+9)}\}=\hspace{-0.3cm}&[0,\ldots,5k+10,2,\ldots,5k+12,1,\ldots,5k+11,5k+9,\ldots,4,3,
\ldots,5k+8],\\
c\{1,2^2,5^{(5k+10)}\}=\hspace{-0.3cm}&[0,\ldots,5k+10,1,\ldots,5k+6,5k+8,5k+13,4,\ldots,5k+9,5k+7,
\ldots,2,\\
&5k+11,
5k+12,3,\ldots,5k+3],\\[4pt]

c\{1,2^3,5^{(5k+7)}\}=\hspace{-0.3cm}&[0,5k+10,\ldots,5,3,\ldots,5k+8,1,\ldots,5k+6,5k+7,\ldots,2,5k+9
,5k+11,\\
&4,\ldots,5k+4],\\
c\{1,2^3,5^{(5k+9)}\}=\hspace{-0.3cm}&[0,\ldots,5k+10,1,3,5k+12,5k+13,4,\ldots,5k+9,5k+7,
\ldots,2,\\
&5k+11,\ldots,6,8,\ldots,5k+8],\\[4pt]
%
c\{1^2,2,5^{(5k+7)}\}=\hspace{-0.3cm}&[0,5k+6,\ldots,1,5k+7,\ldots,2,5k+8,
\ldots,3,5,4,\ldots,5k+9, 5k+10,\ldots,10],\\
c\{1^2,2,5^{(5k+8)}\}=\hspace{-0.3cm}&[0,\ldots,5k+10,5k+11,4,\ldots,5k+9,2,\ldots,5k+7,5k+8,\ldots
,3,1,\ldots,5k+6],\\
c\{1^2,2,5^{(5k+9)}\}=\hspace{-0.3cm}&[0,\ldots,5k+10,2,\ldots,5k+12,5k+11,3,4,\ldots,5k+9,1,\ldots
,5k+6,\\
&5k+8,\ldots,8],\\
c\{1^2,2,5^{(5k+10)}\}=\hspace{-0.3cm}&[0,5k+9,\ldots,4,5k+13,5k+12,\ldots,2,5k+11,\ldots,6,5,
\ldots,5k+10,1,\\
& 3,\ldots,5k+8],\\[4pt]

c\{1^3,2,5^{(5k+8)}\}=\hspace{-0.3cm}&[0,\ldots,5k+5,5k+4,\ldots,4,5k+12,1,\ldots,5k+11,
3,\ldots,5k+8,5k+9,\\
&5k+10,2,\ldots,5k+7].\\
\end{array}\right.$$
\end{footnotesize}

Let $L=\{1^a,2^b,3^c,5^d\}$ with $a,b,c,d\geq 1$.
We have
\begin{footnotesize}
$$\left.\begin{array}{rl}
c\{1,2,3,5^{(5k+7)}\}=\hspace{-0.3cm} &[0,5k+6,\ldots,1,5k+7,\ldots,2,5k+10
,\ldots,5,3,\ldots,5k+8,5k+9,\ldots,4],\\
c\{1,2,3,5^{(5k+8)}\}=\hspace{-0.3cm}&[0,\ldots,5k+10,3,\ldots,5k+8,5k+7,\ldots,2,5k+9,5k+11,\ldots
,1,\\
&4,\ldots,5k+4],\\
c\{1,2,3,5^{(5k+9)}\}=\hspace{-0.3cm}&[0,\ldots,5k+10,5k+8,\ldots,3,2,\ldots,5k+12,4,\ldots,5k+9,1,
5k+11,\ldots,6],\\
c\{1,2,3,5^{(5k+10)}\}=\hspace{-0.3cm}&[0,\ldots,5k+10,1,\ldots,5k+6,5k+9,5k+8,\ldots,3,5k+12,
\ldots,2,\\
&5k+11,5k+13,4,\ldots,5k+4 ],\\[4pt]
c\{1,2,3^2,5^{(5k+7)}\}=\hspace{-0.3cm}&[0,\ldots,5k+10,5k+7,\ldots,2,5k+9,
\ldots,4,3,\ldots,5k+8,5k+11,\\
&1,\ldots,5k+6],\\
c\{1,2,3^2,5^{(5k+9)}\}=\hspace{-0.3cm}&[0,5k+11,2,\ldots,5k+12,5k+13,4,\ldots,5k+9,5k+6,
\ldots,1,5k+10,\ldots,5,\\
&3,\ldots,5k+8],\\

c\{1,2^2,3,5^{(5k+8)}\}=\hspace{-0.3cm}&[0,\ldots,5k+10,2,\ldots,5k+12,4,6,\ldots,5k+6,
5k+8,\ldots,8,9,\ldots,5k+9,\\
&1,5k+11,3].
\end{array}\right.$$
\end{footnotesize}

\section{Final remarks}

Clearly the strategy illustrated in Section 2 can be applied to any explicit list $L$.
So the linear realizations constructed in the previous sections
can be used to prove the conjecture for any list $L$ with underlying-set  $S$ containing any subset of $\{1,2,3,5\}$.
On the other hand, we have to point out that the larger is the size of  $S$,
the larger is also the number of particular cases one should deal with.\\
In our opinion, in order to provide a complete proof of the conjecture,
namely in order to prove that BHR$(L)$ is true \underline{for any} given list $L$,
new strategies must be found.
At this purpose in \cite{PPnew} the authors introduced a new kind of special linear realizations
which turned out to be crucial to investigate the case $S=\{1,2,t\}$, where $t$ is an arbitrary
positive integer.
At the moment we have some partial results, for instance we have proved BHR$(\{1^a,2^b,t^c\})$ when $t\leq8$ for any $a,b,c$, and when $t\equiv0\pmod4$
for any $a>1$ with $a+b\geq t-1$.

\section*{Acknowledgments}
The authors thank  the anonymous referees for their helpful comments and suggestions.


\begin{thebibliography}{99}

\bibitem{BE}
D. Bryant, S. El-Zanati, Graph decompositions,
In: C. J. Colbourn and J. H. Dinitz editors, Handbook of combinatorial designs,
Second Edition, Chapman \& Hall/CRC, Boca Raton, FL, 2006, pp. 477--486.

\bibitem{BCD} M. Buratti, S. Capparelli, A. Del Fra,
\emph{Cyclic Hamiltonian cycle systems of the $\lambda$-fold
complete and cocktail party graphs},
European J. Combin. \textbf{31} (2010), 1484--1496.

\bibitem{BMnew} M. Buratti, F. Merola,
\emph{Hamiltonian cycle systems which are both cyclic and symmetric},
to appear in J. Combin. Des., doi:10.1002/jcd.21351.

\bibitem{BM} M. Buratti, F. Merola,
\emph{Dihedral hamiltonian cycle systems
of the cocktail party graph},
J. Combin. Des. \textbf{21} (2013), 1--23.

\bibitem{BP} M. Buratti, A. Pasotti, \emph{On perfect
$\Gamma$-decompositions of the complete graph}, J.
Combin. Des. \textbf{17} (2008), 197--209.

\bibitem{CDF} S. Capparelli, A. Del Fra,
\emph{Hamiltonian paths in the complete graph with edge-lengths $1,2,3$},
Electron. J. Combin. \textbf{17} (2010), $\sharp$R44.

\bibitem{DJ} J.H. Dinitz, S.R. Janiszewski.
\emph{On hamiltonian paths with prescribed edge lengths in the complete graph},
Bull. Inst. Combin. Appl. \textbf{57} (2009), 42--52.

\bibitem{GR} C. Godsil, G. Royle, Algebraic graph theory.
Graduate Texts in Mathematics. Vol 207. Springer, (2001).


\bibitem{HR} P. Horak, A. Rosa,
\emph{On a problem of Marco Buratti},
Electron. J. Combin. \textbf{16} (2009), $\sharp$R20.

\bibitem{PP} A. Pasotti, M.A. Pellegrini, http://www.ing.unibs.it/$\thicksim$anita.pasotti/BHRproblem.pdf

\bibitem{PPnew} A. Pasotti, M.A. Pellegrini, \emph{Some results on} BHR$(\{1^a,2^b,t^c\})$,
in preparation.

\bibitem{SS}  B. Seamone, B. Stevens,
\emph{Spanning trees with specified differences in Cayley graphs},
Discrete Math. \textbf{312} (2012), 2561–-2565.

 \bibitem{W} D. West, http://www.math.uiuc.edu/$\thicksim$west/regs/buratti.html.
%
\bibitem{Wbook} D. West, Introduction to graph theory. Prentice Hall, New Jersey (1996).


\end{thebibliography}
\end{document}